\documentclass[10pt]{amsart}
\usepackage{amsmath,amsthm,amsfonts,amssymb,amscd}
\usepackage[all]{xy}

\usepackage[english]{babel}
\usepackage{graphicx,color}
\usepackage[T1]{fontenc}
\usepackage[latin1]{inputenc}
\usepackage{ae}
\usepackage{enumerate}

\usepackage[mathscr]{eucal}
\usepackage{amsmath,amssymb}
\usepackage{graphics}
\usepackage{multirow}
\usepackage{tikz}
\usetikzlibrary{arrows.meta,decorations.pathmorphing,calc}
\usepackage{hyperref}
\IfFileExists{srcltx.sty}{\usepackage[active]{srcltx}}{}
\usepackage[a4paper,top=3cm,bottom=3cm,left=3cm,right=3cm]{geometry}

\newtheorem{thm}{Theorem}[section]
\newtheorem{THM}{Theorem}

\newtheorem{prop}[thm]{Proposition}

\newtheorem{lemma}[thm]{Lemma}

\theoremstyle{definition}

\newtheorem{definition}[thm]{Definition}
\newtheorem{remark}[thm]{Remark}

\newtheorem{question}[thm]{Question}

\newcommand{\nc}{\newcommand}
\nc {\hh}{\check{h}}
\nc {\DD}{\mathcal{D}}
\nc {\RR}{\mathcal{R}}
\nc {\Pp}{\mathbb{P}}
\nc {\Ss}{\mathcal{S}}
\nc {\PP}{\mathbb{P}^{2}}
\nc {\Pd}{ \check{\mathbb{P}}^{2}}
\nc {\WW}{\mathcal{W}}
\nc {\Sym}{\mathrm{Sym}}
\nc {\OO}{\mathcal{O}}
\nc {\CC}{\mathbb{C}}
\nc {\EE}{\mathcal{E}}
\nc {\MM}{\mathcal{M}}
\nc {\KK}{\mathcal{K}}
\nc {\PW}{\mathcal{P}}
\nc {\NW}{\mathcal{N}_{\WW}}
\nc {\FF}{\mathcal{F}}
\nc {\GG}{\mathcal{G}}
\nc {\ZZ}{\mathcal{Z}}
\nc {\LL}{\mathcal{L}}
\nc {\HH}{\mathcal{H}}
\nc {\NN}{\mathcal{N}}
\nc {\VV}{\mathcal{V}}
\nc {\Ww}{\mathbb{W}}
\nc {\QQ}{\mathbb{Q}}
\nc {\II}{\mathcal{I}}
\nc {\tang}{\mathrm{Tang}}
\nc {\Diff}{\mathrm{Diff}}
\nc {\Diffun}{\mathrm{Diff}^1(\CC^2,0)}
\nc {\pr}{\mathrm{pr}}
\nc {\cod}{\mathrm{codim}}
\nc {\id}{\mathrm{id}}
\nc {\ord}{\mathrm{ord}}
\nc {\Fol}{\mathrm{Fol}}
\nc {\Distr}{\mathrm{Distr}}
\nc {\rank}{\mathrm{rank}}
\nc {\Lie}{\mathrm{Lie}}
\nc {\Z}{\mathbb{Z}}
\nc {\trdeg}{\mathrm{tr.deg}}

\begin{document}

\title{Neighborhoods of curves with a prescribed number of foliations}

\author[M. Falla Luza, R. Rosas]
{Maycol Falla Luza$^{1}$, Rudy Rosas$^2$}
\address{\newline $1$ Universidade Federal Fluminense, Rua M\'ario Santos Braga S/N, Niter\'oi, RJ, Brazil.\hfill\break
$2$ Pontificia Universidad Cat\'olica del Per\'u, Av. Universitaria 1801, San Miguel
15088, Per\'u}
\email{$^1$ hfalla@id.uff.br} \email{$^2$ rudy.rosas@pucp.pe}
\date{\today}

\subjclass[2020]{37F75, 32S65}
\keywords{Holomorphic foliations, neighborhoods of curves, meromorphic functions}

\begin{abstract}
Given a connected projective curve $C \subset \mathbb{P}^n$, $n\geq 2$, and an integer $0\leq \ell \leq n$, we construct an $n$-dimensional (non compact) complex manifold, obtained as a neighborhood of an embedded copy of $C$, which carries exactly $\ell$ codimension one holomorphic foliations; moreover, every codimension one distribution on it is one of these foliations. We also determine the field of meromorphic functions of these manifolds: it can be prescribed to be $\CC$ or a purely transcendental extension of transcendence degree one, and no larger field is possible as soon as the number of foliations is finite. This extends to arbitrary dimension, and refines, previous constructions of neighborhoods of curves in surfaces without foliations or without non-constant meromorphic functions.
\end{abstract}

\maketitle

\section*{Introduction}

Let $C$ be a compact holomorphic curve embedded in a complex manifold $S$. The germ of $S$ along $C$ -- that is, the class of arbitrarily small neighborhoods of $C$ in $S$ -- is a classical object of study, going back to the work of Grauert \cite{Gra}. Recently there has been much interest in understanding how ``generic'' such germs can be, in the following sense: which analytic objects (functions, foliations, curves) do arbitrary neighborhoods of $C$ carry?  In \cite{FL}, the first author and F.~Loray constructed, for each $d>0$, germs of smooth complex surfaces $(S,C)$ containing a smooth rational curve $C$ of self-intersection $d$ and admitting no singular holomorphic (or even formal) foliation; since every non-constant meromorphic function $f$ on a neighborhood of $C$ induces the singular holomorphic foliation $\{df=0\}$, such germs carry no non-constant meromorphic function. In \cite{FLS}, examples of non-algebraizable neighborhoods of curves in surfaces were produced by gluing techniques, and the field of meromorphic functions of the resulting surfaces was computed: its transcendence degree over $\CC$ can be prescribed to be $0$, $1$ or $2$. See also the papers of Lvovski \cite{Lv1,Lv2} for closely related constructions and for the study of fields of meromorphic functions on neighborhoods of rational curves, and \cite{FLP} for higher dimensional examples of submanifolds with ample normal bundle whose germs of neighborhoods have no non-constant meromorphic functions.

In the present paper we consider neighborhoods of curves in manifolds of arbitrary dimension $n\geq 2$ and we take as our main invariant the \emph{number of codimension one holomorphic foliations} defined on the whole neighborhood. All foliations and distributions in this paper are singular: a codimension one distribution $\DD$ on a complex manifold $S$ is locally defined by holomorphic $1$-forms $\omega$ with singular set of codimension at least two, up to multiplication by units, and $\DD$ is a foliation when $\omega \wedge d\omega=0$. We denote by $\Distr(S)$ and $\Fol(S)$ the sets of codimension one distributions and foliations on $S$.

Our curves are allowed to be singular, and we use the following terminology. Given a projective curve $C\subset\Pp^n$ and a complex manifold $S$ of dimension $n$, a compact analytic curve $\widehat C\subset S$ is an \emph{embedded copy} of $C$ if there is a biholomorphism $\iota:C\to\widehat C$ which is locally induced by ambient biholomorphisms: every point of $C$ has an open neighborhood $\Omega\subset\Pp^n$ and there is a biholomorphism from $\Omega$ onto an open subset $\Omega'\subset S$ carrying $C\cap\Omega$ onto $\widehat C\cap\Omega'$ and restricting to $\iota$ there. In particular $C$ and $\widehat C$ are isomorphic as analytic spaces -- with the same singularities, if any -- and their germs of neighborhoods are locally isomorphic; when $C$ is smooth, this is the usual notion of an embedding of $C$ into $S$.

We shall repeatedly use the following form of the identity theorem, which we state once and for all.

\smallskip
\noindent$(\star)$ \emph{If two codimension one distributions on a connected complex manifold $S$ coincide on some non empty open subset, then they coincide on $S$; and if such a distribution is integrable on some non empty open subset, then it is integrable on $S$.}
\smallskip

Our main result shows that any number of foliations between $0$ and $n$ can be realized on a neighborhood of any projective curve.

\begin{THM}\label{thm:A}
Let $C \subset \mathbb{P}^n$, $n\geq 2$, be a connected projective curve, possibly singular, and let $\ell \in \{0, 1, \ldots, n\}$. Then there exist a smooth complex manifold $S$ of dimension $n$ and an embedded copy $\widehat{C}\subset S$ of $C$ such that:
\begin{enumerate}[(1)]
\item $S$ carries exactly $\ell$ codimension one holomorphic foliations. Moreover, every codimension one distribution on $S$ is integrable, so that
$$\#\Distr(S) = \#\Fol(S) = \ell.$$
\item The same holds for the germ of $S$ along $\widehat{C}$: for every connected open neighborhood $S'\subset S$ of $\widehat{C}$, the restriction maps $\Fol(S) \to \Fol(S')$ and $\Distr(S) \to \Distr(S')$ are bijections.
\item If $C$ is smooth, then $N_{\widehat{C}|S} \simeq N_{C|\mathbb{P}^n}\otimes \OO_C(p)$ for some point $p \in C$; in particular
$$\deg N_{\widehat{C}| S}=\deg N_{C| \mathbb{P}^n}+n-1=(n+1)d+2g+n-3,$$
where $d$ and $g$ denote the degree and the genus of $C$. For $n=2$ this reads $\widehat C\cdot \widehat C= d^2+1$.
\end{enumerate}
\end{THM}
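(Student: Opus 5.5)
We will follow the gluing strategy of \cite{FL,FLS}. Choose a point $p\in C$ that is a smooth point of $C$ and a small ball $B\subset\Pp^n$ around $p$. Let $U$ be a neighborhood of $C$ in $\Pp^n$, and consider the blow-up-free gluing: remove from $U$ a small ball around $p$ and reglue a neighborhood of $C\cap B$ via a germ of biholomorphism $\varphi:(\CC^n,p)\to(\CC^n,p)$ that preserves the curve germ $(C,p)$ (we may straighten $C$ near $p$ to a coordinate axis). Concretely, we take two overlapping open sets $V_1=U\setminus \overline{B'}$ and $V_2=B$ with $B'\Subset B$. We then identify the annular region $V_1\cap V_2$ through $\varphi$, which is defined on a neighborhood of the shell and fixes $C$ pointwise. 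The result $S$ contains an embedded copy $\widehat C$ of $C$. The normal bundle changes by the cocycle $d\varphi|_C$ on the annulus around $p$. If $\varphi$ acts on the normal directions by a factor with a simple pole/zero in the curve coordinate $t$ (e.g.\ $(t,y)\mapsto(t, y/t)$ modified to be holomorphic on the shell), we get the twist $N_{C|\Pp^n}\otimes\OO_C(p)$. The degree formula in (3) then follows from $\deg N_{C|\Pp^n}=(n+1)d+2g-2$ (Euler sequence plus adjunction) plus $n-1$.

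To count distributions, we first observe that $V_1$ and $V_2$ are pieces of $\Pp^n$ whose complements of $\widehat C$ have small pseudoconcave complements. By Hartogs/Levi extension, any distribution on a neighborhood of $C$ in $\Pp^n$ (respectively on the ball piece) extends: distributions on $U$ extend to rational distributions of $\Pp^n$, i.e.\ to global sections of $\Omega^1_{\Pp^n}(k)$ with a fixed finite-dimensional parameter space for each degree. A distribution $\DD$ on $S$ thus gives $\DD_1$ on $V_1$, which is algebraic, and $\DD_2$ on $V_2$. By $(\star)$ these must satisfy $\varphi^*\DD_2=\DD_1$ on the shell, and $\DD_2$ is then determined by $\DD_1$ via analytic continuation. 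The key condition is that $(\varphi^{-1})^*\DD_1$, a germ on the inner shell, extends holomorphically across the small ball. We therefore choose $\varphi$ as a composition $\varphi=\psi\circ\phi$, where $\phi$ is a fixed linear-type map and $\psi$ is ``generic'' (e.g.\ of the form $\exp$ of a generic vector field tangent to $C$, or with generic Taylor coefficients). The requirement is that $\varphi$ send exactly $\ell$ prescribed distributions into extendable ones. Our choice is the $\ell$ foliations given by the pencils of hyperplanes $dx_i=0$, $i=1,\dots,\ell$, in affine coordinates adapted to $p$. We take $\varphi$ of the form $(x_1,\dots,x_n)\mapsto (x_1,\dots,x_\ell, h_{\ell+1}(x),\dots,h_n(x))$, preserving each $dx_i$ for $i\le\ell$, so those glue to foliations on $S$ giving at least $\ell$ elements. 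All distributions are integrable, since those surviving are among these foliations.

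The main obstacle is the converse: showing that for a sufficiently generic choice of the $h_j$, no other element of the countable union $\bigcup_k\mathbb P H^0(\Omega^1_{\Pp^n}(k))$ has $\varphi$-transform extending across the ball. We plan a Baire-category argument. For each degree $k$ and each order $m$ of Laurent expansion, the set of $\varphi$ for which some distribution of degree $\le k$ outside the $\ell$ chosen ones is transformed into a holomorphic germ is a closed proper subset, described by the vanishing of polar parts of a finite-dimensional family. Properness is shown by exhibiting one explicit $\varphi$ (e.g.\ with an essential singularity-type term, or a transcendental $h_j$) that breaks extendability. Taking complement of a countable union yields the generic $\varphi$. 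Statement (2) follows by the same argument applied to $S'$, since extension across the pseudoconcave part is local near $\widehat C$ and $(\star)$ gives injectivity of restriction. The delicate points are making the shell/Hartogs extension rigorous for singular $C$ (handled by keeping the surgery at a smooth point) and making the properness witness explicit for each degree.
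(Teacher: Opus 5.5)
You have a genuine gap, in the step everything rests on: that a distribution $\DD$ on $S$ restricts to an \emph{algebraic} $\DD_1$ on $V_1$, so that the only constraint is whether $(\varphi^{-1})^*\DD_1$ extends across the small ball. This fails under either reading of your gluing.

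Suppose first that $\overline{B'}\subset U$, so the overlap is a genuine spherical shell. Then Hartogs--Levi does extend $\DD_1$ to $U$, and Rossi's theorem makes it rational. But the same argument works on the other side. Write a local defining form as $\sum a_i\,dz_i$ and normalize by a nonzero coefficient. The ratios $a_i/a_{i_0}$ are meromorphic on the shell and extend across the ball by Levi's theorem, so \emph{every} distribution on a shell in $\CC^n$, $n\geq 2$, extends. Your key condition is therefore empty: every rational distribution of $\Pp^n$ induces one on $S$, and $\Distr(S)$ is infinite. Moreover, $\varphi$ and its inverse extend across the balls by Hartogs. The normal cocycle then extends invertibly over the disc $C\cap B$, so the normal bundle does not change. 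A transition like $(t,y)\mapsto(t,y/t)$ cannot be holomorphic on a shell, which contains points with $t=0$.

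Suppose instead that $U$ is thinner than $B'$ near $p$, so the overlap is an annulus times a polydisc around $C$, which is what such a twist requires. Then $V_1$ is only a neighborhood of the non-compact curve $C\setminus\overline{B'}$. Neither Rossi's theorem nor Hartogs applies, and $\DD_1$ has no reason to be rational. The countable family $\bigcup_k\Pp H^0(\Omega^1_{\Pp^n}(k))$ on which your Baire argument runs then does not control $\Distr(S)$.

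Even granting rationality, the properness witness is missing. That witness, a single $\varphi$ destroying all non-prescribed distributions of a given degree, is the whole difficulty. Also, your explicit $\varphi=(x_1,\dots,x_\ell,h_{\ell+1},\dots,h_n)$ becomes $\id$ when $\ell=n$, giving $S=U$.

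The idea you are missing is to modify the neighborhood of $C$ \emph{without removing anything from it}. The paper attaches a neighborhood $T_E$ of the exceptional divisor $E$ of $Bl_0\CC^n$ to a neighborhood $T_C$ of the whole curve. The attachment goes through a chart $\tilde W$ around $q\in E$ that contains a compact piece of $E$. Since $S_0\supset T_C\supset C$, Rossi's theorem makes $\DD|_{T_C}$ rational. The constraint then becomes that this rational distribution, read through $h=(f,g,\dots,g)$, extend over $\overline{\tilde W}\cap E$.

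Lemma~\ref{lema-sobre-h} shows that only the coordinate distributions pass, in all degrees at once, so no genericity argument is needed. It rests on the uniformization $g$ of a Jordan domain whose boundary has no $C^1$-arc. The unwanted coordinate foliations are killed beforehand by the charts built from the covering $\sigma$ (Propositions~\ref{ell=0} and~\ref{ell=>1}).

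The twist by $\OO_C(p)$ in all normal directions comes afterwards, from contracting $E$, which does not affect distributions (Hartogs at a point). The surviving foliations become $\pi^*\{dz_1=0\}$ and the pencils $\pi^*\{z_1dz_j-z_jdz_1=0\}$, which are singular at the contracted point. So the gluing preserves these foliations but not their first integrals, contrary to your requirement $\varphi^*dx_i=dx_i$.
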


The construction is, roughly speaking, as follows. We take a neighborhood of the curve $C \subset \mathbb{P}^{n}$ and a neighborhood of the exceptional divisor $E$ of the blow-up of the origin of $\mathbb{C}^{n}$. We then choose points $p \in C$ and $q\in E$ and glue these neighborhoods by means of a biholomorphism $\tilde{h}$ that sends a neighborhood of $q$ onto a region attached to $C$ at $p$. The gluing is performed so that the curve and the exceptional divisor become transversal in the resulting manifold; finally, we contract the exceptional divisor, thus obtaining a new neighborhood of the curve $C$. The gluing map is built from two one-dimensional ingredients with wild boundary behavior: a covering map of the punctured disc, whose iterates are linearly independent, and a uniformization of the interior of a Jordan curve without $C^1$-arcs. The first ingredient produces coordinates in which prescribed foliations become coordinate foliations while all remaining coordinate foliations become inextensible; the second one destroys, in a very strong sense, every distribution which is not a coordinate distribution. Depending on the choice of the local data one obtains various situations -- for instance, manifolds carrying only finitely many foliations or distributions.

Our second result concerns the field $\MM(S)$ of meromorphic functions of the manifolds of Theorem \ref{thm:A}. As recalled above, non-constant meromorphic functions induce foliations; more precisely, if $f,g \in \MM(S)$ satisfy $df\wedge dg \not\equiv 0$, then the pencil $\{f + t g\}_{t\in \CC}$ induces infinitely many pairwise distinct foliations on $S$ (Proposition \ref{prop:pencil}). Consequently, on a manifold with finitely many foliations, any two meromorphic functions must be functionally dependent. For the manifolds of Theorem \ref{thm:A} we obtain a complete description.

\begin{THM}\label{thm:B}
Let $S$ be a manifold as in Theorem \ref{thm:A}, together with its distinguished set of foliations $\Fol(S)=\{\FF_1, \ldots, \FF_{\ell}\}$. Then:
\begin{enumerate}[(1)]
\item If $\ell = 0$, then $\MM(S) = \CC$.
\item If $\ell \geq 1$, then, according to choices made in the construction, either
$$\MM(S)=\CC \qquad \text{or}\qquad \MM(S)=\CC(f),$$
a purely transcendental extension of $\CC$ of transcendence degree one, where $f$ is a meromorphic first integral of the foliation $\FF_1$; both cases occur for every $\ell\geq 1$. In the second case, none of the foliations $\FF_2, \ldots, \FF_{\ell}$ admits a non-constant meromorphic first integral on $S$.
\item In all cases $\trdeg_{\CC}\MM(S)\leq 1$, and statements (1) and (2) also hold for every connected open neighborhood of $\widehat{C}$ in $S$.
\end{enumerate}
\end{THM}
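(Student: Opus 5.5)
Theorem \ref{thm:B} will be proved by combining Proposition \ref{prop:pencil} with the explicit structure of the foliations produced by the construction of Theorem \ref{thm:A}. The plan has four steps.

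\textbf{Step 1: transcendence degree.} I first show that $\trdeg_{\CC}\MM(S')\le 1$ for every connected open neighborhood $S'$ of $\widehat C$. If $f,g\in\MM(S')$ were algebraically independent, then $df\wedge dg\not\equiv 0$. Proposition \ref{prop:pencil} would then give infinitely many pairwise distinct foliations on $S'$. This contradicts Theorem \ref{thm:A}(2), which says $\Fol(S')$ is in bijection with $\Fol(S)$ and hence has $\ell$ elements. So any two meromorphic functions are functionally dependent.

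\textbf{Step 2: case $\ell=0$, and the first integrals.} Every non-constant $f\in\MM(S')$ defines a foliation $\{df=0\}$: saturate $df$ by dividing out its codimension one zeros and poles. When $\ell=0$ no foliation exists, so $\MM(S')=\CC$, which is (1). When $\ell\ge1$, every non-constant $f$ is a first integral of some $\FF_i$. I will use the construction to show that only $\FF_1$ can have one. The gluing map $\tilde h$ puts the foliations $\FF_2,\dots,\FF_\ell$ into coordinates in which the covering map of the punctured disc acts. Its linearly independent iterates should force the leaves of these foliations, near the glued region, to accumulate or to be non-closed in the contracted neighborhood of $E$. A meromorphic first integral would make all leaves analytic hypersurfaces, each closing up in the compact region, and this gives a contradiction. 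The same mechanism decides $\FF_1$. If $\FF_1$ is also wild, then $\MM=\CC$. If instead the data are chosen so that $\FF_1$ is the pullback of a fibration, then $\FF_1$ has a first integral $f$: it is given by a global coordinate on the $\Pp^n$ side, for instance a linear projection restricted to a neighborhood of $C$, matched with a function on the blow-up side through $\tilde h$. Realizing both options for every $\ell\ge 1$ amounts to this choice of local data.

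\textbf{Step 3: $\MM(S)=\CC(f)$ in the first-integral case.} Let $g$ be non-constant. By Step 1 and Step 2, $g$ is a first integral of $\FF_1$, so $dg\wedge df=0$. Both $f$ and $g$ are then meromorphic first integrals of the same foliation. The extension $\CC(f,g)\supset\CC(f)$ is algebraic. I will choose $f$ primitive, meaning its generic fibers are connected. Then the standard Stein factorization argument applies: $g$ is constant on the connected fibers of $f$, and this gives $g\in\CC(f)$. Lüroth's theorem then ensures that the field is purely transcendental; if $f$ is not primitive, I replace it by a generator. To get connectedness of the generic fibers, I will check them on the explicit model near $C$, where the fibers are hyperplane sections with controlled topology. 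I expect them to extend connectedly across the glued region.

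\textbf{Main obstacle.} The main obstacle is the non-existence of first integrals for $\FF_2,\dots,\FF_\ell$ (and for $\FF_1$ in the $\MM=\CC$ case). It requires turning the inextensibility and independence properties of the covering map into a statement about leaves. I would first try a monodromy or holonomy argument: compute the holonomy of the relevant coordinate foliation around the transversal curve $\widehat C$ through the glued point. If this holonomy has infinite orbits, the foliation has no meromorphic first integral.
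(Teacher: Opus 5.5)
Your proposal has a genuine gap, and it sits in Step 2, which carries the whole theorem. You attribute the absence of first integrals for $\FF_2,\dots,\FF_\ell$ to the covering map $\sigma$. But $\sigma$ enters only through the chart $\Psi=Z\circ\widehat\Phi$, and $\widehat\Phi$ \emph{preserves} $\{dz_i=0\}$ for every $i\leq\ell$. Its iterates are used only to kill the coordinate foliations $\{dz_j=0\}$, $j>\ell$, and play no role for $\FF_2,\dots,\FF_\ell$.

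These foliations are in fact tame on both pieces of $S_0$. Each $\FF_i$, $i\geq 2$, has a rational first integral on $\Pp^{n}$ (Lemma \ref{lem:adapted}), and on $T_E$ it is $\{du_i=0\}$, with the meromorphic first integral $\pi^*(z_i/z_1)$. So there is no infinite holonomy or accumulation of leaves for a holonomy argument to detect.

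The actual obstruction comes from the Jordan-domain factor $g$ of the gluing map $h=(f,g,\dots,g)$, combined with two extension theorems. This is Lemma \ref{lem:functions}, the key step missing from your plan:
\begin{enumerate}
\item For $F\in\MM(S_0)$, Rossi's theorem extends $F|_{T_C}$ to a rational function $R$ on $\Pp^{n}$.
\item Levi's theorem gives $F|_{T_E}=G\circ\pi$ with $G$ meromorphic on a ball $\mathbb{D}^{n}_{\varepsilon}$.
\item Matching through $\tilde h$ yields $G(x,xy_1,\dots,xy_k)=\varrho(f(x),g(y_1),\dots,g(y_k))$ with $\varrho=R\circ\Psi$.
\item Freeze $x=a$ with $0<|a|<\varepsilon$, and all $y_i$ with $i\neq j$. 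The left side becomes a function $w(\xi)$ meromorphic on $\{|\xi|<\varepsilon/|a|\}\supset\overline{\mathbb{D}}$. The right side becomes $\varrho_j\circ g$, with $\varrho_j$ meromorphic near $\overline D$.
\item Lemma \ref{Lema-principal}(2), applied to $d\varrho_j$, forces $\varrho$ to depend on $z_1$ only.
\end{enumerate}
This is what singles out $\FF_1=\{dz_1=0\}$, whose coordinate passes through the tame factor $f$. It gives $\MM(S)=\CC\cup\{\text{rational first integrals of }\FF_1\}$.

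From there everything is algebraic. The dichotomy comes from the choice of $\FF_1$ on $\Pp^{n}$ in Lemma \ref{lem:adapted}, not from making $\FF_1$ ``wild'' through the gluing. Either $\FF_1$ is a linear pencil, whose rational first integrals are the $\phi(H/H')$ with $\phi\in\CC(w)$ (your construction of $f$ in this case is essentially the paper's converse direction). Or $\FF_1$ is a pull-back of Jouanolou's foliation, which has no rational first integral at all. Finally, a first integral of $\FF_i$, $i\geq 2$, would be a rational first integral of $\FF_1$, forcing $\FF_i=\FF_1$.

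Steps 1 and 3 also rest on a false implication. Two algebraically independent meromorphic functions can satisfy $df\wedge dg\equiv 0$; take $z_1$ and $e^{z_1}$. So Proposition \ref{prop:pencil} together with Theorem \ref{thm:A}(2) only gives \emph{functional} dependence. To get $\trdeg_{\CC}\MM(S)\leq 1$ you need the injection $\MM(S)\hookrightarrow\CC(\Pp^{n})$ supplied by Rossi's theorem, since for rational functions functional dependence does imply algebraic dependence.

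Similarly, Lüroth's theorem needs an ambient purely transcendental field. A Stein-factorization argument on the non-compact $S$, with fibers you only ``expect'' to be connected, is not a proof. Once the injection and the explicit pencil are available, $\MM(S)=\CC(R_0)$ follows directly.

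Your treatment of $\ell=0$ is correct: a non-constant $F$ would give the foliation $\{dF=0\}$ on $S'$. It is even slightly simpler than the paper's argument.
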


In particular we obtain, in any dimension $n \geq 2$ and for any projective curve $C$, neighborhoods of $C$ all of whose meromorphic functions are constant, in the spirit of \cite{FL, FLP}. Note also that the value $2$ for the transcendence degree, which is realized in the examples of \cite{FLS}, is impossible here: by Proposition \ref{prop:pencil}, a neighborhood of a curve whose field of meromorphic functions has transcendence degree at least two carries infinitely many foliations. In this sense Theorem \ref{thm:B} is optimal.

It is worth comparing Theorem \ref{thm:A} with the global, projective situation, which turns out to be completely different: a projective manifold of dimension at least two always carries \emph{infinitely many} codimension one foliations, since its field of rational functions has transcendence degree at least two and pencils of rational functions produce infinitely many foliations (Proposition \ref{prop:pencil}). Finiteness of the set of foliations is therefore a genuinely non-algebraic phenomenon, and germs of neighborhoods are its natural habitat. In the projective category the significant question is instead how many codimension one foliations can contain a \emph{fixed} foliation $\GG$ of codimension $q$: by the work of G.~S.~Barbosa and J.~V.~Pereira on unlikely intersections \cite[Theorem D]{BP}, either there are at most $q$ of them, or there are infinitely many. We discuss this in Remark \ref{rem:BP}, and Section \ref{sec:pereira} exhibits a codimension two foliation on $\Pp^3$ contained in exactly two codimension one foliations, showing that the bound is attained. Finally, Proposition \ref{prop:genposbound} shows that the bound $\#\Distr(S)\leq \dim S$ holds for distributions on an arbitrary complex manifold, provided $\Distr(S)$ is finite.

The paper is organized as follows. In Section \ref{sec:local}, we establish the local constructions that will be used to define the gluing map $\tilde{h}$ with the desired properties. In Section \ref{sec:gluing} we perform the gluing, prove that every distribution on the resulting manifold is a coordinate distribution in a suitable chart (Theorem \ref{thm:coordinate}), and discuss the general questions on manifolds with finitely many foliations, including the projective case (Remark \ref{rem:BP}) and the general position property (Proposition \ref{prop:genposbound}). Section \ref{sec:proofA} contains the proof of Theorem \ref{thm:A} and Section \ref{sec:functions} the proof of Theorem \ref{thm:B}. We close the paper, in Section \ref{sec:pereira}, with a different construction, explained to us by Jorge Vit\'orio Pereira, of a surface containing a rational curve and carrying exactly two foliations.

Throughout the paper we write $n = k+1$, so that a neighborhood of a curve in $\mathbb{P}^{k+1}$ has $k$ ``normal'' directions.

\section{Local constructions}\label{sec:local}

We denote by $\mathbb{D}_r\subset \CC$ the disc of radius $r$ centered at the origin, $\mathbb{D}:=\mathbb{D}_1$, $\mathbb{D}^*:=\mathbb{D}\setminus \{0\}$, and by $\mathbb{H}$ the upper half plane.

Let us start with the function $\sigma: \mathbb{D} \to \mathbb{D}^*$ given by the composition of the biholomorphism $\tau: \mathbb{D} \to \mathbb{H}$, $\tau(w)= i \dfrac{1+w}{1-w}$, and the covering $z\mapsto \exp(iz): \mathbb{H}\to \mathbb{D}^*$. This map is a covering which extends continuously to $\partial \mathbb{D}-\{1\}$, with $\sigma(\partial \mathbb{D}-\{1\})= \partial \mathbb{D}$, and it has the following property, which is the source of all the wild behavior exploited in this paper: \emph{given any $L \in \mathbb{D}$, there exists a sequence $(z_n)\to 1$ in $\mathbb{D}$ such that $\sigma(z_n)\to L$}. Indeed, $\tau$ maps any neighborhood of $1$ in $\mathbb{D}$ onto a neighborhood of infinity in $\mathbb{H}$, on which $\exp(iz)$ assumes every value of $\mathbb{D}^*$. We denote by $\sigma^j$ the composition $\sigma^{\circ j}$, and set $\sigma^0=\id$.

\begin{lemma}\label{lem:indep}
Given $s \in \mathbb{N}$, the functions $1, \id, \sigma, \sigma^2, \ldots, \sigma^s$ are linearly independent over $\CC$ as functions on $\mathbb{D}$.
\end{lemma}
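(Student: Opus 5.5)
Suppose there is a nontrivial relation $a + b\,\id + \sum_{j=1}^s c_j \sigma^j \equiv 0$ on $\mathbb{D}$. We argue by induction on the largest index $m$ with $c_m\neq 0$, and exploit the boundary behaviour of $\sigma$ near $1$. The base case is a relation $a+b\,\id\equiv 0$, which forces $a=b=0$ immediately. So assume $m\geq 1$ and $c_m\neq 0$, and write
$$\sigma^m = -\frac{1}{c_m}\Big(a + b\,\id + \sum_{j=1}^{m-1} c_j\sigma^j\Big).$$

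The key observation concerns the range of $\sigma^m$ near the point $1$. Take any $L\in\mathbb{D}^*$ and a sequence $z_n\to 1$ with $\sigma(z_n)\to L$. Since $\sigma$ has no zeros, $\sigma^{m-1}(\sigma(z_n))\to \sigma^{m-1}(L)$, and hence $\sigma^m(z_n)\to\sigma^{m-1}(L)$. Along the same sequence, $\id(z_n)\to 1$ and $\sigma^j(z_n)\to \sigma^{j-1}(L)$ for $1\le j\le m-1$.

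Substituting into the relation and letting $n\to\infty$ gives, for every $L\in\mathbb{D}^*$,
$$c_m\,\sigma^{m-1}(L) + \sum_{j=1}^{m-1} c_j\sigma^{j-1}(L) + a + b = 0.$$
This is a linear relation among $1,\id,\sigma,\dots,\sigma^{m-1}$ on $\mathbb{D}^*$, where the coefficient $c_1$ multiplies $\id$ and the constant term $a+b$ multiplies $1$; by the identity theorem it extends to all of $\mathbb{D}$. Its top coefficient is $c_m\neq 0$.

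For $m\geq 2$ the top function is $\sigma^{m-1}$ with $m-1\ge 1$, so the induction hypothesis yields a contradiction. For $m=1$ the relation reads $c_1 + a + b = 0$, which is not a contradiction by itself. In that case we return to the original relation $a+b\,\id+c_1\sigma\equiv 0$. It says $\sigma$ is affine, but $\sigma$ is a covering $\mathbb{D}\to\mathbb{D}^*$ and hence not injective; alternatively, its cluster set at $1$ is all of $\overline{\mathbb{D}}$. This contradicts $c_1\ne0$.

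The main obstacle is index bookkeeping. The shift $\sigma^j\mapsto\sigma^{j-1}$ sends $\sigma^1$ to $\id$ and $\id$ to the constant $1$, so the derived relation is an honest relation in the same family with top coefficient unchanged. This is also why the induction hypothesis must be stated for the family $1,\id,\sigma,\dots,\sigma^{s}$ exactly as in the lemma.
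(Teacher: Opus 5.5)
Your proof is correct and follows essentially the same route as the paper: a minimal (inductive) counterexample, limits along sequences $z_n\to 1$ with $\sigma(z_n)\to L$, and the resulting downward shift $\sigma^j\mapsto\sigma^{j-1}$ of the relation. One small slip: for $m=1$ the derived relation is $c_1L+a+b=0$ for all $L$, not $c_1+a+b=0$, since $\sigma^0=\id$ (as you yourself note in your last paragraph); this already forces $c_1=0$ exactly as in the paper, so your detour through the non-injectivity of $\sigma$, while valid, is unnecessary.
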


\begin{proof}
By contradiction, take $s\geq 1$ minimal such that there is a non trivial linear relation
$$
a_{-1}+a_0x + a_1 \sigma(x) +\ldots +a_s \sigma^s(x) = 0, \qquad x \in \mathbb{D}.
$$
By minimality we necessarily have $a_s \neq 0$. Take $L\in \mathbb{D}$ arbitrary and $(z_n)\to 1$ such that $\sigma(z_n)\to L$; note that $\sigma^{j}(z_n)=\sigma^{j-1}(\sigma(z_n))\to \sigma^{j-1}(L)$ for $j\geq 1$, by continuity of $\sigma^{j-1}$ at $L\in \mathbb{D}$. Applying the relation to $z_n$ and taking the limit we obtain
$$(a_{-1}+a_0)+a_1L + a_2 \sigma(L) +\ldots + a_s \sigma^{s-1}(L)=0$$
for every $L \in \mathbb{D}$. Since $a_s\neq 0$, this is a non trivial relation of shorter length, which contradicts the minimality of $s$ if $s\geq 2$, and gives $a_1=0$ directly if $s=1$.
\end{proof}

Set now $f_j: \mathbb{D} \to \mathbb{D}_{\frac{1}{2}}$, $f_j(x) = \dfrac{1}{4}\left(\int_0^x \sigma^j(s)ds +1\right)$, where the integral is taken along any path in $\mathbb{D}$; since $\vert\int_0^x \sigma^j\vert<1$ we have $f_j(x) \neq 0$ for all $x\in \mathbb{D}$. Now, for any $s \in \mathbb{N}$, we define
\begin{equation}\label{phi}
\Phi=\Phi_s: \mathbb{D}\times \mathbb{D}^s \to  \mathbb{D}_{\frac{1}{2}}\times\mathbb{D}_{\frac{1}{2}}^{s}, \,\, \Phi(x, y_1, \ldots, y_s)=\left(\dfrac{x}{2}, y_1f_1(x), \ldots, y_s f_s(x)\right).
\end{equation}
\begin{prop}\label{Enunciado sobre Phi}
The map  $\Phi$ has the following properties.
\begin{enumerate}
\item $\Phi$ is a biholomorphism onto its image $U = \Phi( \mathbb{D}\times \mathbb{D}^s) \subset \mathbb{D}_{\frac{1}{2}}\times \mathbb{D}_{\frac{1}{2}}^s$, and it extends continuously to $\overline{\mathbb{D}}\times \overline{\mathbb{D}}^s$. Moreover $\Phi(\{y=0\})=\{y=0\}\cap U$.
\item If $\mathcal{F}$ is a codimension one foliation on $U$ tangent to the vector field $\dfrac{\partial \Phi}{\partial x}$, then there is a sequence $(Y_n)\subset \mathbb{D}^s$, $Y_n \to 0$, such that $\mathcal{F}$ does not extend to any neighborhood of $\Phi(1, Y_n)$. In particular $\mathcal{F}$ does not extend to any neighborhood of $\overline{U}$.
\end{enumerate}
\end{prop}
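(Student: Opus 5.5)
The plan is to check part (1) by elementary means and to prove part (2) by contradiction. For (2), the limit property of $\sigma$ together with Lemma \ref{lem:indep} will force every local defining $1$-form of an extension of $\mathcal F$ to vanish along a real-codimension-two piece of the hyperplane $\{u=1/2\}$, i.e.\ a set of complex dimension $s$ and complex codimension one in $\CC^{s+1}$. This is incompatible with the singular set having codimension at least two.

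\emph{Part (1).} The map $\Phi$ is injective because $x$ is recovered from the first coordinate, and then $y_j=w_j/f_j(x)$, since $f_j$ never vanishes. It is holomorphic with holomorphic inverse on its image, which is open: the Jacobian is triangular with nonzero diagonal entries $1/2, f_1(x),\dots,f_s(x)$, so $\Phi$ is open. For the continuous extension, note that $|\sigma^j|<1$. Integrating along straight segments in the convex disc gives $|f_j(x)-f_j(x')|\le |x-x'|/4$, so each $f_j$ is uniformly continuous and extends continuously to $\overline{\mathbb D}$. Hence $\Phi$ extends continuously to $\overline{\mathbb D}\times\overline{\mathbb D}^s$. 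The same estimate along the segment $[0,1]$ gives $|\int_0^1\sigma^j|<1$, so $f_j(1)\neq 0$. The statement $\Phi(\{y=0\})=\{y=0\}\cap U$ is immediate from $f_j\neq 0$.

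\emph{Part (2).} Write coordinates $(u,w)$ on $U$. Since $f_j'=\sigma^j/4$, we have
$$\frac{\partial\Phi}{\partial x}(x,Y)=\Bigl(\tfrac12,\tfrac14 Y_1\sigma(x),\dots,\tfrac14 Y_s\sigma^s(x)\Bigr).$$
Suppose the conclusion fails. Then there is $r>0$ such that $\mathcal F$ extends to a neighborhood of $\Phi(1,Y)$ for every $Y$ with $|Y|<r$. Fix such a $Y$ with all coordinates nonzero, set $p=\Phi(1,Y)$, and take a connected neighborhood $W$ of $p$ carrying an extension defined by a holomorphic $1$-form $\omega$ with singular set of codimension $\ge2$. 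Shrinking $W$ to a small polydisc, we may assume that $W\cap U$ is connected near $p$, or at least that $\omega$ agrees with $\mathcal F$ on a component accumulating at $p$. This is where the identity principle $(\star)$ enters, and the details need some care. Then $\omega\bigl(\partial\Phi/\partial x\bigr)=0$ at points $\Phi(x,Y)$ with $x$ near $1$.

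Given $L\in\mathbb D$, choose $z_n\to1$ with $\sigma(z_n)\to L$. By continuity of $\Phi$ and of $\sigma^{j-1}$ at $L$, the points $\Phi(z_n,Y)$ tend to $p$ and the vectors tend to
$$v_L=\Bigl(\tfrac12,\tfrac14Y_1L,\tfrac14Y_2\sigma(L),\dots,\tfrac14Y_s\sigma^{s-1}(L)\Bigr).$$
Hence $\omega(p)(v_L)=0$ for all $L\in\mathbb D$. Writing $\omega(p)=(a_0,\dots,a_s)$, this reads
$$\tfrac{a_0}{2}+\tfrac14\sum_{j} a_jY_j\sigma^{j-1}(L)\equiv0 .$$
By Lemma \ref{lem:indep}, the functions $1,\id,\sigma,\dots,\sigma^{s-1}$ are linearly independent, and $Y_j\neq0$, so all $a_j=0$, i.e.\ $\omega(p)=0$.

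The same argument applies at every $p'=\Phi(1,Y')$ for $Y'$ near $Y$, using the same $\omega$ since $p'\in W$. Thus $\omega$ vanishes on $\{(1/2,\,Y_1'f_1(1),\dots,Y_s'f_s(1))\}$, which, because $f_j(1)\neq0$, is an open subset of the hyperplane $\{u=1/2\}$. This contradicts the codimension of the singular set. Finally, the sequence $(Y_n)$ obtained shows that $\mathcal F$ cannot extend to a neighborhood of $\overline U$, which contains all the points $\Phi(1,Y_n)$.

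The main obstacle I anticipate is the bookkeeping in the last step. One must make sure that the extension on $W$ really coincides with $\mathcal F$ on the part of $U$ near $p$ that is approached by $\Phi(z_n,Y)$ (via $(\star)$ on a suitable connected component). One must also check that a single $\omega$ works for all nearby $Y'$.
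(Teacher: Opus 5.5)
Your proof is correct and is essentially the paper's argument. The paper picks $p=\Phi(1,Y^0)$, with all $Y^0_j\neq 0$, to be a regular point of the extension and derives a linear relation contradicting Lemma \ref{lem:indep}; you run the same limit argument at every nearby point of $\{u=\tfrac12\}$ to show the defining form vanishes there, which is just the contrapositive.

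Two small remarks. First, your connectedness worry is moot: an extension of $\mathcal F$ near $p$ is a foliation on an open set containing $U$ and $p$ whose restriction to $U$ is $\mathcal F$, so its defining form annihilates $\partial\Phi/\partial x$ on all of $W\cap U$. Second, the Lipschitz bound only gives $|\int_0^1\sigma^j|\le 1$. Strict inequality comes from $\int_0^1|\sigma^j(t)|\,dt<1$, which holds because $|\sigma^j|<1$ on $[0,1)$.
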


\begin{proof}
The fact that $f_j(x)\neq 0$ for every $x\in \mathbb{D}$ implies that $\Phi$ is injective, hence invertible over its image; the statements about $\{y=0\}$ are immediate from \eqref{phi} and from $f_j\neq 0$. Note that $|f_j'(x)|=\dfrac{1}{4}|\sigma^j(x)|<1$, thus $f_j$ is Lipschitz and extends continuously to $\overline{\mathbb{D}}$; this ends the first item.

For the second part, assume by contradiction that $\mathcal{F}$ extends to a neighborhood of every point $\Phi(1,Y)$ with $Y$ close to $0$; we will get a contradiction using only extendability near the points $\Phi(1,Y)$ with $Y$ in a small polydisc around the origin. Denote $a_j= f_j(1)$ (the continuous extension) and let us prove that $a_j\neq 0$. Since $\sigma^j$ maps $[0,1)$ into $\mathbb{D}$ and extends continuously to $[0,1]$, we have $\delta_j:=\int_0^1 \vert \sigma^j(t) \vert dt <1$; computing the limit of $f_j$ at $1$ along the radius $[0,1)$ we obtain
$$ \vert a_j\vert =\lim_{t \to 1^-} \vert f_j(t) \vert =\frac{1}{4} \lim_{ t \to 1^-} \left\vert \int_0^t \sigma^j(s) ds +1 \right\vert \geq \frac{1}{4} (1 - \delta_j) >0 .$$
Thus $\Phi(1, y)=(\frac{1}{2}, a_1y_1, \ldots, a_sy_s)$ maps $\{1\}\times \mathbb{D}^s$ onto $\{\frac{1}{2}\}\times \Delta$ for some polydisc $\Delta$ containing $0$. We take a point $q=(a_1y_1^0, \ldots, a_sy_s^0) \in \Delta$ with all $y_j^0\neq 0$ and such that $p=(\frac{1}{2},q)$ is a regular point of (the extension of) $\mathcal{F}$, and observe that, by hypothesis, the curve
$$
C(t) =\left(\frac{t}{2}, y_1^0f_1(t), \ldots, y_s^0f_s(t)\right), \,\, t\in \mathbb{D}
$$
is tangent to $\mathcal{F}$. Since $p$ is regular and $C(t) \to p$ as $t \to 1$, there is a connected neighborhood $V$ of $1$ in $\overline{\mathbb{D}}$ such that $C(t)$ lies in the plaque of $\mathcal{F}$ through $p$ for all $t\in V\cap \mathbb{D}$: indeed, for $t$ close to $1$ the connected curve $C(t)$ is tangent to $\mathcal{F}$ inside a foliation box around $p$, hence contained in a single plaque, whose closure contains $p$. Take $L\in \mathbb{D}$ and a sequence $(t_n)\to 1$ such that $\sigma(t_n)\to L$; then
$$
\lim_{n\to \infty} C'(t_n) = \left(\frac{1}{2}, \frac{y_1^0}{4}L, \frac{y_2^0}{4}\sigma(L), \ldots, \frac{y_s^0}{4}\sigma^{s-1}(L)\right) \in T_p \mathcal{F},
$$
because $C'(t)=\left(\frac12, \frac{y^0_1}{4}\sigma(t),\ldots, \frac{y^0_s}{4}\sigma^s(t)\right)$ is tangent to the leaves and tangent spaces vary continuously. Since $T_p\mathcal{F}$ is a hyperplane, the family of vectors above, parametrized by $L \in \mathbb{D}$, satisfies a non trivial linear relation
$$
\frac{\alpha}{2} + \beta_1\frac{y^0_1}{4} L + \beta_2\frac{y^0_2}{4}\sigma(L)+\ldots+\beta_s \frac{y^0_s}{4}\sigma^{s-1}(L)=0 \qquad \text{for all } L \in \mathbb{D},
$$
and since all $y^0_j\neq 0$ this contradicts Lemma \ref{lem:indep}.
\end{proof}

The following proposition produces parameterizations with inextensible coordinate foliations; it will give the case $\ell=0$ of Theorem \ref{thm:A}. We denote by $A:=\{(z_1,0,\dots,0)\}$ the first coordinate axis of $\CC^{k+1}$, and we use coordinates $(z_1,\dots,z_{k+1})=(x,y_1,\dots,y_k)$.

\begin{prop}\label{ell=0}
Let $\Delta_2 \subset \mathbb{C}^{k+1}$ be an open set, $p\in \Delta_2$ and let $C\subset \Delta_2$ be a smooth curve through $p$. Then there is a biholomorphism $\Psi: \mathbb{D}^{k+1}\to \Delta$, where $p \in \Delta \subset \overline{\Delta}\subset \Delta_2$ and $\Psi(0)=p$, such that:
\begin{enumerate}[(i)]
\item $\Psi^{-1}(C\cap \Delta)= A \cap \mathbb{D}^{k+1}$;
\item none of the $k+1$ foliations $\Psi_*\{dz_i=0\}$, $i=1,\dots,k+1$, defined on $\Delta$, extends to $\Delta_2$.
\end{enumerate}
\end{prop}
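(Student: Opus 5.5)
First straighten the curve. Choose local holomorphic coordinates near $p$ in which $C$ becomes the first coordinate axis $A$ and $p$ is the origin. Shrinking and rescaling, we may assume these coordinates identify a neighborhood $W$ with $\overline W\subset\Delta_2$ with a polydisc $\mathbb{D}_R^{k+1}$ of some radius $R>1$. From now on we work in this polydisc, so that $\Delta_2\supset \mathbb{D}_R^{k+1}$ and $C\cap \mathbb{D}_R^{k+1}=A\cap\mathbb{D}_R^{k+1}$.

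Next, use the map $\Phi=\Phi_s$ of Proposition \ref{Enunciado sobre Phi}, with $s=k$. It sends $\mathbb{D}^{k+1}$ biholomorphically into $\mathbb{D}_{1/2}^{k+1}$ and maps $\{y=0\}$ onto $\{y=0\}\cap U$, so $\Phi^{-1}(A)=A\cap\mathbb{D}^{k+1}$. By item (2) of that proposition, $\Phi_*\{dy_i=0\}$ does not extend to any neighborhood of $\overline U$, because it is tangent to $\partial\Phi/\partial x$. Indeed, along each leaf of $\{dy_i=0\}$ the coordinate $x$ varies freely, and $\{dy_i=0\}$ is tangent to $\partial_x$. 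So these $k$ foliations are fine. The remaining foliation $\Phi_*\{dx=0\}$ is tangent to the $\partial_{y_j}$ and may well extend, so the $x$-direction has to be treated separately.

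To handle $\{dx=0\}$, I would precompose $\Phi$ with a map that permutes roles. Let $\Phi^{(i)}$ be the analogous map in which the $i$-th coordinate plays the role of $x$. Taking a composition of these introduces nonlinear dependence in every direction. The cleaner route is this: set $\Psi=\Phi\circ\Theta$, where $\Theta$ is a biholomorphism of $\mathbb{D}^{k+1}$ onto its image that preserves $A$ and sends the foliation $\{dz_1=0\}$ to a foliation tangent to some vector field of the form $\partial\Phi'/\partial y_1$-type wild direction. Concretely, I would try $\Theta(x,y)=(\frac{x}{2}+y_1 g(y_1),\,y)$-like shears. Alternatively, apply a second copy of Proposition \ref{Enunciado sobre Phi} in the variable $y_1$ with the role of the curve axis swapped. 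I expect this step to be the main obstacle: the axis $A$ must be preserved while every coordinate direction becomes inextensible. One natural fix is to first apply a linear change $L$ that makes $A$ a diagonal-type line, apply $\Phi$ in the new coordinates, and then arrange that $\Phi$ maps $A$ to a smooth curve. After that, use a final holomorphic straightening $\Sigma$ of that image curve, defined on a neighborhood of $\overline U$. Since $\Sigma$ is biholomorphic on a neighborhood of $\overline U$, it does not affect inextensibility. The key is that none of the hyperplanes $\{dz_i=0\}$ contains $\partial_x$ after the linear change, so Proposition \ref{Enunciado sobre Phi}(2) applies to each of them.

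Finally, define $\Delta$ as the image of $\mathbb{D}^{k+1}$, shrunk and rescaled so that $\overline\Delta\subset\mathbb{D}_R^{k+1}\subset\Delta_2$. Normalize by translation so that $\Psi(0)=p$. Property (i) follows from how $A$ was tracked through the construction. For (ii), if some $\Psi_*\{dz_i=0\}$ extended to $\Delta_2$, it would in particular extend to a neighborhood of $\overline\Delta$, contradicting Proposition \ref{Enunciado sobre Phi}(2). Throughout, $(\star)$ guarantees that an extension, if it exists, agrees with the original foliation, so there is no ambiguity in this argument.
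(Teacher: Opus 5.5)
There is a genuine gap, and it sits exactly where you anticipated: the foliation $\{dx=0\}$. Your first steps are fine and match the paper. You straighten $C$ to the axis $A$ in a chart, and a single $\Phi=\Phi_k$ destroys the $k$ foliations $\{dy_j=0\}$ by Proposition \ref{Enunciado sobre Phi}(2).

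None of your fixes for $\{dx=0\}$ works.
\begin{itemize}
\item The ``natural fix'' reverses the hypothesis of Proposition \ref{Enunciado sobre Phi}(2). That statement applies to foliations tangent to $\partial\Phi/\partial x$, i.e.\ whose pullback by $\Phi$ \emph{contains} $\partial_x$, not to those that avoid it.
\item No preliminary change of coordinates can repair this. If $\Theta$ is any biholomorphism (your linear map $L$, the shear $(x/2+y_1g(y_1),y)$, and so on), the conormals of $\Theta_*\{dz_1=0\},\dots,\Theta_*\{dz_{k+1}=0\}$ form a basis of the cotangent space at every point. Hence at most $k$ of these foliations can be tangent to $\partial_x$. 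With one application of $\Phi$, Proposition \ref{Enunciado sobre Phi}(2) can only ever certify $k$ of the $k+1$ foliations.
\item For your shear, $\Theta_*\{dx=0\}=\{d(x-y_1g(y_1))=0\}$ is visibly not tangent to $\partial_x$.
\item Post-composing with a straightening $\Sigma$ that is biholomorphic near $\overline U$ changes nothing, as you note yourself.
\end{itemize}

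What is missing is a second wild map with a different distinguished direction. Your remark about ``a second copy in the variable $y_1$'' points that way, but you drop it. The paper takes $\Psi=Z\circ T\circ\Phi\circ T\circ\Phi$, with $T$ the transposition $x\leftrightarrow y_1$. Your worry about the axis resolves easily. One has $\Phi^{-1}(A)=A$ and also $\Phi^{-1}(A_2)=A_2$ for the second axis $A_2$, because the first component is $x/2$ and $f_j(0)\neq 0$. Since $T$ swaps $A$ and $A_2$, the composition preserves $A$.

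Each family of foliations is then handled by its own transport argument.
\begin{itemize}
\item For $\{dx=0\}$: $\Phi_*\{dx=0\}=\{dx=0\}$, and $T$ turns this into $\{dy_1=0\}$. So $(\Phi\circ T\circ\Phi)_*\{dx=0\}$ is the restriction of $\Phi_*\{dy_1=0\}$ to $\Phi(T(U))\subset U$. An extension of it to the big polydisc would agree with $\Phi_*\{dy_1=0\}$ on all of the connected set $U$, by $(\star)$. That would extend $\Phi_*\{dy_1=0\}$ to a neighborhood of $\overline U$, contradicting Proposition \ref{Enunciado sobre Phi}(2).
\item For $\{dy_j=0\}$: the outer map $\Phi\circ T$ is holomorphic and injective on $\mathbb{D}^{k+1}\supset\overline U$, hence biholomorphic on a neighborhood $N$ of $\overline U$. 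Pulling back a putative extension by $\Phi\circ T$ would extend $\Phi_*\{dy_j=0\}$ to $N$, again a contradiction.
\end{itemize}
Your closing sentence applies Proposition \ref{Enunciado sobre Phi}(2) directly to $\Psi_*\{dz_i=0\}$ near $\overline\Delta$. That skips precisely these two transport arguments, which are needed because $\Psi$ is not $\Phi$.
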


\begin{proof}
Consider first a biholomorphism $Z:\mathbb{D}_2^{k+1} \to \Delta'$ onto a neighborhood $\Delta'$ of $p$, with $\overline{\Delta}'\subset \Delta_2$, $Z(0)=p$ and $Z^{-1}(C\cap \Delta')=A\cap \mathbb{D}_2^{k+1}$; such a $Z$ exists because $C$ is smooth at $p$. Consider also the transposition $T(x,y_1, \ldots, y_k)=(y_1, x, y_2, \ldots, y_k)$ and define
$$\Psi= Z\circ T \circ \Phi \circ T \circ \Phi: \mathbb{D}^{k+1}\to \Delta_2,$$
with $\Phi=\Phi_k$ as in \eqref{phi}. Observe that $\Delta:=\Psi(\mathbb{D}^{k+1})$ satisfies $\overline{\Delta} \subset \Delta_2$, since the image of $T\circ \Phi\circ T\circ \Phi$ has compact closure in $\mathbb{D}_2^{k+1}$.

Let us check (i). By Proposition \ref{Enunciado sobre Phi}(1) we have $\Phi^{-1}(A)=A$ (as germs at the origin of $\mathbb{D}^{k+1}$, and in fact globally on $\mathbb{D}^{k+1}$); moreover $\Phi^{-1}(A_2)=A_2$, where $A_2$ denotes the second axis, because $\Phi(x,y)\in A_2$ forces $x=0$ and $y_j f_j(0)=0$ for $j\geq 2$. Since $T$ interchanges $A$ and $A_2$, we conclude that $(T\circ\Phi\circ T\circ \Phi)^{-1}(A)=A$, and (i) follows from $Z^{-1}(C\cap\Delta')=A\cap \mathbb{D}_2^{k+1}$.

For (ii), since $Z\circ T: \mathbb{D}_2^{k+1} \to \Delta_2$ is a biholomorphism onto its image, it is enough to see that $(\Phi \circ T \circ \Phi)_* \{dx=0\}$ and $(\Phi \circ T \circ \Phi)_* \{dy_j=0\}$ do not extend to $\mathbb{D}_2^{k+1}$ for all $j$. Let us explain why $\Phi$ has to be composed twice. By Proposition \ref{Enunciado sobre Phi}(2), a single $\Phi$ destroys the $k$ foliations $\{dy_j=0\}$, whose images are tangent to $\partial\Phi/\partial x$; but it does not destroy $\{dx=0\}$, whose image is again $\{dx=0\}$. The transposition $T$ turns $\{dx=0\}$ into a foliation of the first kind, so that the outer $\Phi$ destroys it as well, while for the $\{dy_j=0\}$ the damage already done by the inner $\Phi$ is simply carried along by the outer biholomorphism. Throughout, we write $U:=\Phi(\mathbb{D}^{k+1})$, a connected open set with $\overline U\subset \overline{\mathbb{D}}_{\frac12}^{k+1}\subset \mathbb{D}^{k+1}$.
\begin{enumerate}[(a)]
\item Observe that $\Phi_* \{dx=0\}=\{dx=0\}$ and hence $T_*\Phi_*\{dx=0\}=\{dy_1=0\}$, so that $(\Phi \circ T \circ \Phi)_* \{dx=0\}$ is the restriction of $\Phi_* \{dy_1=0\}$ to the open subset $\Phi(T(U))\subset U$. Since $\{dy_1=0\}$ is tangent to $\partial_x$, the foliation $\Phi_*\{dy_1=0\}$ is tangent to $\dfrac{\partial \Phi}{\partial x}$, so by Proposition \ref{Enunciado sobre Phi}(2) it extends to no neighborhood of $\overline{U}$. Now if $(\Phi \circ T \circ \Phi)_* \{dx=0\}$ extended to a distribution $\GG$ on $\mathbb{D}_2^{k+1}$, then $\GG|_U$ and $\Phi_*\{dy_1=0\}$ would be two distributions on the connected open set $U$ agreeing on $\Phi(T(U))$, hence equal by $(\star)$; as $\overline U$ is a compact subset of $\mathbb{D}_2^{k+1}$, this would give the forbidden extension.

\item Set $V:=\Phi_*\{dy_j=0\}$, a foliation on $U$ tangent to $\dfrac{\partial\Phi}{\partial x}$, so that $(\Phi \circ T \circ \Phi)_* \{dy_j=0\}=(\Phi \circ T)_*V$. The map $\Phi\circ T$ is holomorphic and injective on $\mathbb{D}^{k+1}\supset\overline U$, hence a biholomorphism from an open neighborhood $N$ of $\overline{U}$ onto its image, and $(\Phi\circ T)(N)\subset \mathbb{D}_{\frac12}^{k+1}\subset\mathbb{D}_2^{k+1}$. Therefore, if $(\Phi\circ T)_*V$ extended to a distribution $\GG$ on $\mathbb{D}_2^{k+1}$, the pullback $(\Phi\circ T)^*\GG$ would be a distribution on $N$ which coincides with $V$ on $U$ -- that is, an extension of $V$ to a neighborhood of $\overline U$, again contradicting Proposition \ref{Enunciado sobre Phi}(2).
\end{enumerate}
\end{proof}

The proposition above can be extended so as to preserve a prescribed set of foliations, as follows. This will give the cases $1\leq \ell\leq k+1$ of Theorem \ref{thm:A}.

\begin{prop}\label{ell=>1}
Let $\ell \in \{1, \ldots, k+1\}$, let $\Delta_2\subset \CC^{k+1}$ be an open set, $p \in \Delta_2$, let $C \subset \Delta_2$ be a smooth curve through $p$, and let $\mathcal{F}_1,\ldots, \mathcal{F}_{\ell}$ be codimension one foliations on $\Delta_2$, regular at $p$, such that:
\begin{enumerate}[(a)]
\item the tangent spaces $T_p \mathcal{F}_1, \ldots, T_p \mathcal{F}_{\ell}$ are in general position, i.e.\ their conormal directions at $p$ are linearly independent;
\item $C$ is transverse to $\mathcal{F}_1$ at $p$;
\item for $i=2,\dots,\ell$, the germ of $C$ at $p$ is contained in the leaf of $\FF_i$ through $p$.
\end{enumerate}
Then there is a biholomorphism $\Psi: \mathbb{D}^{k+1}\to \Delta$, where $p \in \Delta \subset \overline{\Delta}\subset \Delta_2$ and $\Psi(0)=p$, such that
\begin{enumerate}[(i)]
\item $\Psi^{-1}(C\cap \Delta)=A\cap \mathbb{D}^{k+1}$;
\item the foliation $\mathcal{F}_i|_{\Delta}$ is the image $\Psi_*\{dz_i = 0\}$, for all $i=1, \ldots, \ell$;
\item if $\ell \leq k$, then for any $j = \ell+1, \ldots, k+1$ the foliation $\Psi_*\{dz_j = 0\}$, defined on $\Delta$, does not extend to any neighborhood of $\overline{\Delta}$.
\end{enumerate}
\end{prop}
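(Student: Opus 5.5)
First I would choose a chart in which the prescribed foliations are coordinate foliations, and then precompose with a map built from $\Phi$ that keeps these coordinate foliations while destroying the remaining ones. Concretely, take holomorphic first integrals $g_1,\dots,g_\ell$ of $\FF_1,\dots,\FF_\ell$ near $p$, with $g_i(p)=0$ and $dg_1(p),\dots,dg_\ell(p)$ linearly independent; this uses regularity at $p$ and hypothesis (a). Since $C$ is transverse to $\FF_1$ by (b), a parametrization $\gamma(t)$ of $C$ can be normalized so that $g_1(\gamma(t))=t$. By (c), $g_i\circ\gamma\equiv 0$ for $i\ge 2$.

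Next complete these functions to a coordinate system. Choose holomorphic functions $h_{\ell+1},\dots,h_{k+1}$ that vanish on $C$ and whose differentials at $p$ together with $dg_1,\dots,dg_\ell$ form a basis of $T_p^*$. This is possible because the $dg_i(p)$, $i\ge2$, vanish on $T_pC$ and $dg_1(p)$ does not, so one may take linear forms vanishing on $T_pC$ and correct them by terms vanishing on $C$ (for instance straighten $C$ first). The result is a chart $W=(g_1,\dots,g_\ell,h_{\ell+1},\dots,h_{k+1})$ in which $C$ is the first axis $A$ and $\FF_i=\{dz_i=0\}$ for $i\le \ell$. Let $Z:=W^{-1}$ restricted to a small polydisc $\mathbb{D}_r^{k+1}$ with $\overline{Z(\mathbb{D}_r^{k+1})}\subset\Delta_2$, and rescale to $\mathbb{D}_2^{k+1}$.

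Now precompose with a map that preserves the coordinate foliations $dz_1,\dots,dz_\ell$ and the axis $A$, and destroys the others. Set $s=k+1-\ell$ and use $\Phi_s$ acting on the variables $(x,y_{\ell+1},\dots,y_{k+1})$, with $z_2,\dots,z_\ell$ unchanged (perhaps scaled by $1/2$ to keep closures compact). Write $\widetilde\Phi(x,z_2,\dots,z_\ell,y)=(x/2,z_2/2,\dots,z_\ell/2,\,y_{j}f_{j-\ell}(x))$. Then $\widetilde\Phi_*\{dz_i=0\}=\{dz_i=0\}$ for $i\le\ell$, $\widetilde\Phi^{-1}(A)=A$, and each $\widetilde\Phi_*\{dy_j=0\}$ is tangent to $\partial\widetilde\Phi/\partial x$. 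Put $\Psi=Z\circ\widetilde\Phi$, which gives (i) and (ii).

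For (iii) I would adapt Proposition \ref{Enunciado sobre Phi}(2) with $z_2,\dots,z_\ell$ carried as parameters. Suppose a foliation tangent to $\partial\widetilde\Phi/\partial x$ extends near $\widetilde\Phi(1,z',Y)$. Fixing $z'$ generic, the curve $C(t)=\widetilde\Phi(t,z',y^0)$ has limit tangent vectors $(\tfrac12,0,\dots,0,\tfrac{y^0_j}{4}\sigma^{j-\ell-1}(L))$, and the same hyperplane argument together with Lemma \ref{lem:indep} gives a contradiction. Non-extendability then transfers through the biholomorphism $Z$, which is defined on a neighborhood of $\overline{\widetilde\Phi(\mathbb{D}^{k+1})}$, as in part (b) of the proof of Proposition \ref{ell=0}.

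The main obstacle is the first step: producing a single chart in which all $\FF_i$ are coordinate foliations and $C$ is exactly the $x$-axis. This needs the hypotheses in precisely the form given, with $C$ inside the leaves of $\FF_2,\dots,\FF_\ell$ and transverse to $\FF_1$. Note that only one application of $\Phi$ is needed here, unlike in Proposition \ref{ell=0}, because $\{dz_1=0\}$ is meant to survive.
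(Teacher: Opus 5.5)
Your overall strategy is the paper's: a rectifying chart built from first integrals $g_1,\dots,g_\ell$ and completed by functions vanishing on $C$, followed by precomposition with $\Phi_s$ acting on a block of variables. Taking $z_1$ as the distinguished variable of $\Phi_s$ is harmless; the paper uses $z_\ell$ and leaves $z_1,\dots,z_{\ell-1}$ untouched. Parts (i) and (ii) are fine.

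The gap is in (iii). You invoke an adapted version of Proposition~\ref{Enunciado sobre Phi}(2): a foliation tangent to $\partial\widetilde\Phi/\partial x$ cannot extend near the points $\widetilde\Phi(1,z',Y)$. This is false as soon as $\ell\geq 2$. The foliation $\{dz_2=0\}$ is tangent to $\partial\widetilde\Phi/\partial x=(\tfrac12,0,\dots,0,\,y_jf'_{j-\ell}(x))$, yet it is defined on all of $\CC^{k+1}$. The reason is that your limit vectors $(\tfrac12,0,\dots,0,\tfrac{y^0_j}{4}\sigma^{j-\ell-1}(L))$ vanish in the slots $2,\dots,\ell$. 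They therefore all lie in any hyperplane with conormal $\sum_{i=2}^{\ell}\gamma_i\,dz_i$. If $T_p\mathcal G$ is such a hyperplane, the linear relation you extract has zero coefficients on $1,L,\sigma(L),\dots$, and Lemma~\ref{lem:indep} gives nothing. So the hyperplane argument does not carry over verbatim when the $z'$-directions stay in the ambient space.

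The fix must use information specific to $\widetilde\Phi_*\{dz_j=0\}$, not only its tangency to $\partial\widetilde\Phi/\partial x$. There are two ways to do this.
\begin{enumerate}
\item Work in the full space. In the target coordinates $w$ this foliation is defined by $d\big(w_j/f_{j-\ell}(2w_1)\big)$, so it is tangent to $\partial/\partial w_2,\dots,\partial/\partial w_\ell$. By the identity theorem, applied on a small ball around a boundary point that meets $\widetilde\Phi(\mathbb D^{k+1})$, any extension keeps this tangency. Hence at a regular point $p$ its conormal is $\alpha\,dw_1+\beta\,dw_j$ with $(\alpha,\beta)\neq(0,0)$. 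Your relation then reads $\tfrac{\alpha}{2}+\tfrac{\beta y^0_j}{4}\sigma^{j-\ell-1}(L)=0$ for all $L\in\mathbb D$. With $y^0_j\neq 0$ this is impossible, since $\sigma^{j-\ell-1}$ is non-constant.
\item Do as the paper does. Restrict the extension to a slice on which the parameters $z'=(z_2,\dots,z_\ell)$ are frozen. The restriction is non-degenerate, because on the open part it is $(\Phi_s)_*\{dy_j=0\}$ in the slice. Then apply Proposition~\ref{Enunciado sobre Phi}(2) in dimension $s+1$, where the limit tangent vectors fill out all ambient directions and the problem disappears.
\end{enumerate}
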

\begin{proof}
We first construct a ``rectifying'' chart. Let $u_1$ be a local holomorphic first integral of $\FF_1$ at $p$, with $u_1(p)=0$ and $du_1(p)\neq 0$. For $i=2,\dots,\ell$, let $u_i$ be a local holomorphic first integral of $\FF_i$ at $p$ vanishing on the local leaf through $p$; by hypothesis (c), $u_i$ vanishes on the germ of $C$ at $p$, so that $du_i(p)$ annihilates $T_pC$. Since $C$ is smooth at $p$, its conormal space $(T_pC)^{\perp}\subset T^*_p\CC^{k+1}$ has dimension $k$, and we may choose germs $u_{\ell+1},\dots,u_{k+1}$ vanishing on $C$ whose differentials at $p$ complete $du_2(p),\dots,du_{\ell}(p)$ to a basis of $(T_pC)^{\perp}$. By (a) and (b), $du_1(p)\notin (T_pC)^{\perp}$ and hence $du_1(p),\dots,du_{k+1}(p)$ is a basis of $T_p^*\CC^{k+1}$. Therefore $(u_1,\dots,u_{k+1})$ is a system of coordinates near $p$; moreover $\{u_2=\dots=u_{k+1}=0\}$ is a smooth curve through $p$ containing the germ of $C$, hence equal to it. After a linear rescaling we obtain a biholomorphism $Z: \mathbb{D}_2^{k+1}\to \Delta'\subset \Delta_2$, $Z(0)=p$, with $\overline{\Delta'}\subset\Delta_2$, satisfying $Z^{-1}(C\cap\Delta')=A\cap \mathbb{D}_2^{k+1}$ and $\FF_i|_{\Delta'}=Z_*\{dz_i=0\}$ for $i\leq \ell$.

If $\ell=k+1$ we simply take $\Psi:=Z|_{\mathbb{D}^{k+1}}$, and there is nothing more to prove. Assume then $\ell\leq k$, let $\Phi=\Phi_s$ be as in \eqref{phi} with $s=k+1-\ell$, and define
$$\widehat{\Phi}(z_1, \ldots , z_{k+1})= (z_1, \ldots, z_{\ell -1}, \Phi(z_{\ell}, \ldots,z_{k+1})), \qquad \Psi:=Z\circ \widehat{\Phi}.$$
The image of $\widehat\Phi$ has compact closure in $\mathbb{D}_2^{k+1}$, so $\overline{\Delta}\subset \Delta_2$ for $\Delta:=\Psi(\mathbb{D}^{k+1})$. Clearly $\widehat{\Phi}$ preserves the foliations $\{dz_i=0\}$ for $i=1, \ldots, \ell$ (for $i<\ell$ this is obvious, and for $i=\ell$ it follows from the fact that the first component of $\Phi$ is $z_{\ell}/2$), which gives (ii). Property (i) follows as in Proposition \ref{ell=0}: $\widehat\Phi^{-1}(A)=A$, because $\Phi(z_\ell,\dots,z_{k+1})=0$ forces $z_{\ell}=\dots=z_{k+1}=0$, while $\Phi$ maps the first axis of $\mathbb{D}\times\mathbb{D}^{s}$ to itself.

Finally, let us prove (iii). Suppose that for some $j \geq \ell +1$ the foliation $\Psi_{*}\{dz_j=0\}$ extends to a distribution $\GG$ on a neighborhood $\Omega$ of $\overline{\Delta}$; note that $\GG$ is automatically integrable by $(\star)$, since it is integrable on the open set $\Delta$ (we may assume $\Omega$ connected). Via the biholomorphism $Z$, we obtain a foliation on a neighborhood of the closure of $\widehat\Phi(\mathbb{D}^{k+1})$ extending $\widehat\Phi_*\{dz_j=0\}$. Consider the restriction to a slice $\{z_1=a_1, \ldots, z_{\ell-1}=a_{\ell-1}\}$, with $(a_1,\dots,a_{\ell-1})$ fixed: on this slice, which we identify with $\mathbb{D}_2^{s+1}$ with coordinates $(z_{\ell},\dots,z_{k+1})$, the extended foliation restricts to a (possibly singular) foliation defined near the closure of $\Phi(\mathbb{D}^{s+1})$, extending $\Phi_*\{dz_j=0\}$, which is tangent to $\dfrac{\partial \Phi}{\partial z_{\ell}}$. This contradicts Proposition \ref{Enunciado sobre Phi}(2).
\end{proof}

We now introduce the second ingredient. We say that a set $\mathcal{A} \subset \mathbb{C}$ is a \emph{$C^1$-arc} if there exists a $C^1$-homeomorphism $\varphi: (0,1) \rightarrow \mathcal{A}$  such that $\varphi'(t) \neq 0$ for all $t\in (0,1)$. Consider a continuous Jordan curve $\mathcal{C}$ that contains no $C^{1}$-arcs -- for instance, a suitably rescaled Koch snowflake --
and let $D$ denote its interior region. Suppose that $0\in D$.
Let $g: \mathbb{D} \to D$ be a uniformization map with $g(0)=0$.
By Carath\'eodory's theorem, $g$ extends to a homeomorphism
\[
g: \overline{\mathbb{D}} \longrightarrow \overline{D},
\]
which maps $\partial \mathbb{D}$ homeomorphically onto $\mathcal{C}$.
The following result will be particularly useful in what follows.

\begin{lemma}\label{Lema-principal}
The function $g:  \overline{\mathbb{D}}\rightarrow \overline{D}$ has the following properties:
\begin{enumerate}
\item There does not exist an open arc $A \subset \partial \mathbb{D}$ such that $\lim_{z \to a}g'(z)$ exists for all $a\in A$.
\item Let $\eta$ be a meromorphic $1$-form defined on a neighborhood of $\overline{D}$. Then $g^* \eta$ cannot be extended meromorphically to any neighborhood of $\overline{\mathbb D}$, unless $\eta$ is identically zero.
\end{enumerate}
\end{lemma}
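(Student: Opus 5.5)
For item (1), I argue by contradiction. Suppose $A\subset\partial\mathbb{D}$ is an open arc on which $h(a):=\lim_{z\to a}g'(z)$ exists for every $a\in A$. The first goal is to show that $g|_A$ is then $C^1$ as a function of the angle, with derivative $ia\,h(a)$. For $a=e^{i\theta}$ and a nearby point $e^{i\theta'}\in A$, write $g(re^{i\theta'})-g(re^{i\theta})=\int_\theta^{\theta'} i re^{it}g'(re^{it})\,dt$ and let $r\to 1^-$. Passing to the limit requires a uniform bound on $g'$ near compact subarcs of $A$. I would get it as follows: $h$ is continuous on $A$, because $g'$ extends to a function on $\mathbb{D}\cup A$ that is continuous at each point of $A$, since at every boundary point the full (unrestricted) limit exists. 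Hence $g'$ is bounded on $\{re^{it}: r_0<r<1,\ t\in K\}$ for every compact subarc $K$, which justifies dominated convergence. This gives $g(e^{i\theta'})-g(e^{i\theta})=\int_\theta^{\theta'} ie^{it}h(e^{it})\,dt$, so $\theta\mapsto g(e^{i\theta})$ is $C^1$ on $A$.

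It remains to produce a sub-arc on which this derivative does not vanish. Since $g|_{\partial\mathbb{D}}$ is a homeomorphism onto $\mathcal{C}$, it is not constant on any subarc, so $h\not\equiv 0$ on $A$. By continuity $h\neq 0$ on some open subarc $A'$. Then $g(A')$ is a $C^1$-arc contained in $\mathcal{C}$, contradicting the choice of $\mathcal{C}$. The only delicate point is the interpretation of ``$\lim_{z\to a}$'' as an unrestricted limit in $\mathbb{D}$, which is exactly what yields local boundedness and continuity of $h$.

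For item (2), suppose $\eta\not\equiv 0$ and that $g^*\eta$ extends meromorphically to a neighborhood $W$ of $\overline{\mathbb{D}}$. Write $\eta=\phi(w)\,dw$ with $\phi$ meromorphic near $\overline D$ and $\phi\not\equiv 0$. Then $g^*\eta=\phi(g(z))g'(z)\,dz$, and its coefficient extends to a meromorphic function $\psi$ on $W$. The poles and zeros of $\phi$ in a neighborhood of the compact set $\overline D$ are finite in number. Since $\mathcal{C}=g(\partial\mathbb{D})$ and $g$ is a homeomorphism on the boundary, I can choose an open arc $A\subset\partial\mathbb{D}$ whose image avoids the zeros and poles of $\phi$, and which also avoids the finitely many poles and zeros of $\psi$ on $\partial\mathbb{D}$. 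After shrinking, $\psi$ is holomorphic and nonvanishing near $A$, and $\phi$ is holomorphic and nonvanishing near $g(\overline A)$. For $z\in\mathbb{D}$ close to $a\in A$, $g(z)$ is close to $g(a)$ by continuity of $g$ on $\overline{\mathbb{D}}$. Therefore $g'(z)=\psi(z)/\phi(g(z))\to \psi(a)/\phi(g(a))$ as $z\to a$. This limit exists for every $a\in A$, contradicting item (1).

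The main obstacle is the careful limit argument in item (1): turning pointwise existence of boundary limits of $g'$ into a genuine $C^1$ parametrization with nonzero derivative. Item (2) is a short reduction to item (1).
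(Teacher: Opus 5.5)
Your proof is correct and follows essentially the paper's route. The one difference is in item (1): you get the $C^1$ regularity of $\theta\mapsto g(e^{i\theta})$ by integrating $g'$ over circles $|z|=r$ and letting $r\to 1^-$ with dominated convergence, where the bound comes from continuity of the extended $g'$ on $\mathbb{D}\cup A$; the paper instead applies the mean value theorem to the real and imaginary parts of $g$ along chords ending at $a$, showing directly that $(g(z)-g(a))/(z-a)$ tends to $\lim_{z\to a}g'(z)$. The rest of item (1), and the reduction of item (2) to item (1), match the paper's argument.
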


\begin{proof}
Suppose there exists an open arc $A$ such that $\lim_{z \to a}g'(z)$ exists for all $a\in A$.

\noindent {\bf Claim: } For each $a \in A$ we have
$$\underset{ z \in \overline{\mathbb{D}}}{\lim_{z \to a}}\dfrac{g(z)-g(a)}{z-a} = \underset{ z \in \mathbb{D}}{\lim_{z \to a}}g'(z) .$$

In fact, writing $g(z) = u(z) + i v(z)$ and regarding $u,v$ as real functions on $\mathbb{D}\subset \mathbb{R}^2$, by the mean value theorem we have, for $z$ in the segment condition below,
$$
\dfrac{g(z)-g(a)}{z-a} = \dfrac{du(\alpha_z)\cdot (z-a)}{z-a} +i \dfrac{dv(\beta_z)\cdot (z-a)}{z-a},
$$
for some $\alpha_z, \, \beta_z$ in the open segment $(z,a)$. In particular $\alpha_z, \, \beta_z \in \mathbb{D}$ and $\alpha_z, \, \beta_z \to a$ when $z\to a$. By hypothesis
$\lim_{z \to a}g'(z) = A_0+iB_0 \in \mathbb{C}$. Since $g'(w) = u_x(w) + i v_x(w) = v_y (w) - iu_y(w)$ for all $w\in \mathbb{D}$, we have $u_x(w), v_y(w) \to A_0$ and $v_x(w), -u_y(w) \to B_0$ when $w\to a$. Therefore
$$
g(z)-g(a) = \left(\begin{matrix}
u_x(\alpha_z) & u_y(\alpha_z)\\
v_x(\beta_z) & v_y(\beta_z)
\end{matrix}\right) \cdot (z-a) \underset{z\to a}{\longrightarrow} \left(\begin{matrix}
A_0&-B_0\\
B_0 & A_0
\end{matrix}\right) \cdot (z-a) = (A_0+iB_0)(z-a),
$$
in the sense that the difference is $o(|z-a|)$, thus
$$
\frac{g(z)-g(a)}{z-a} \rightarrow A_0+iB_0
$$
and the claim follows.

By the claim, the function $g': \mathbb{D}\cup A \rightarrow \mathbb{C}$ is continuous. Let us prove the first item. Let $A=\{e^{it}: \, t \in(\theta_1, \theta_2)\}$ be a parametrization of the arc and $\gamma(t) = g(e^{it})$; by the claim, $\gamma$ is differentiable with $ \gamma'(t) = g'(e^{it})\cdot \underbrace{ie^{it}}_{\neq 0}$. Since $\gamma$ is a homeomorphism onto its image, we infer that $g'|_{A}\not\equiv 0$. By the continuity of $g'$ there exists an arc $A_0 \subset A$ such that $g'(a)\neq 0$ for all $a \in A_0$. Therefore $g(A_0)$ would be a $C^1$-subarc of $\mathcal{C}$, which leads to a contradiction.

For the second item, write $\eta=F(z) dz$ with $F$ meromorphic on a neighborhood of $\overline{D}$, $F\not\equiv 0$, and suppose that $g^*\eta=(F\circ g)g'\,dz$ extends meromorphically to a neighborhood of $\overline{\mathbb{D}}$. The set of points of $\mathcal{C}$ that are zeros or poles of $F$ is finite; hence there exists an open arc
\(A \subset \partial \mathbb{D}\) such that \(F ( g(z)) \neq 0,\infty\) for all \(z \in A\). On a neighborhood of $A$ in $\overline{\mathbb{D}}$, the extension of $(F\circ g)g'$ is finite up to shrinking $A$, so $g'=\frac{(F\circ g)g'}{F\circ g}$ admits a continuous limit at every point of $A$. This contradicts the first item.
\end{proof}

Fix now $r\in (0,1)$ and assume, as we may after rescaling, that $\overline{D}\subset \mathbb{D}_r$. Let $f: \mathbb{D} \rightarrow \mathbb{D}_r$ be holomorphic and injective with $f(0)=0$ (for instance $f(x)=rx/2$). Define
$$
h: \mathbb{D}\times \mathbb{D}^k \rightarrow \mathbb{C}^{k+1}, \,\, h(x, y_1, \ldots, y_k):=(f(x), g(y_1), \ldots, g(y_k)),
$$
and denote $U:= h( \mathbb{D}\times \mathbb{D}^k )=f(\mathbb{D})\times D^k$. Notice that $\overline{U}\subset \overline{\mathbb{D}}_r^{k+1}\subset\mathbb{D}^{k+1}$, and that for each of the $k+1$ coordinate foliations $\{dx=0\},\{dy_1=0\},\dots,\{dy_k=0\}$ the pullback $h^*$ is again the same coordinate foliation, which of course extends across the boundary $\{0\}\times\overline{\mathbb{D}}^k$. The next lemma says that these are the \emph{only} distributions whose pullback by $h$ survives, in the sense that if $\DD$ is any other distribution defined near $\overline U$, then $h^*\DD$ extends to no neighborhood of $\{0\}\times\overline{\mathbb{D}}^k$; equivalently, no distribution other than the coordinate ones pushes forward under $h$ to a distribution defined near that boundary.

\begin{lemma}\label{lema-sobre-h}
Let $\mathcal{D}$ be a codimension one distribution defined on a polydisc $V$ containing $\overline{U}$ and distinct from the $k+1$ distributions defined by the forms $dx, \, dy_1, \ldots , dy_k$. Then $h^* \mathcal{D}$ extends to no neighborhood of $\{0\}\times \overline{\mathbb{D}}^k$.
\end{lemma}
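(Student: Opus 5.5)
The plan is to reduce everything to a one-variable statement and then invoke Lemma~\ref{Lema-principal}(2). Write $\DD$ on the polydisc $V$ as $\omega=a_0\,dx+\sum_{i=1}^k a_i\,dy_i$ with $a_0,\dots,a_k$ holomorphic on $V$. Since $\DD$ is not one of the $k+1$ coordinate distributions, at least two coefficients are not identically zero; fix indices $\alpha\neq\beta$ in $\{0,1,\dots,k\}$ with $a_\alpha,a_\beta\not\equiv 0$. The case $\alpha,\beta\geq 1$ and the case $\alpha=0$ are handled in the same way.

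First compute the pullback:
$$h^*\omega=a_0(h)\,f'(x)\,dx+\sum_{i=1}^k a_i(h)\,g'(y_i)\,dy_i .$$
Suppose, for contradiction, that $h^*\DD$ extends to a distribution on a connected neighborhood $W$ of $\{0\}\times\overline{\mathbb D}^k$. Locally this extension is given by a holomorphic form $\theta=b_0\,dx+\sum_i b_i\,dy_i$. On $W\cap(\mathbb D\times\mathbb D^k)$ the forms $h^*\omega$ and $\theta$ define the same distribution, so they are proportional by a meromorphic factor. Hence every ratio of coefficients of $h^*\omega$ coincides, on an open set, with the corresponding ratio $b_\cdot/b_\cdot$, which is meromorphic on $W$. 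In particular, with the convention $y_0=x$ and $g'(y_0)$ replaced by $f'(x)$, the function
$$R:=\frac{a_\beta(h)\,(\text{derivative factor in the }\beta\text{-slot})}{a_\alpha(h)\,(\text{derivative factor in the }\alpha\text{-slot})}$$
extends meromorphically to $W$. (If the $b$'s in some chart have $b_\alpha\equiv 0$, we compare with the original side instead, where $a_\alpha(h)\not\equiv0$; the identity theorem handles this.)

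Next restrict to a one-dimensional slice transverse to the boundary in a variable $y_i$ with $i\geq1$; say $\beta\geq 1$ and slice in $y_\beta$. Fix $x=0$ and fix the coordinates $y_m$, $m\neq\beta$, at points $c_m\in\mathbb D$. The set $\{0\}\times\{y_\beta\in\overline{\mathbb D}\}\times\prod\{c_m\}$ lies in $\{0\}\times\overline{\mathbb D}^k\subset W$, so $y_\beta\mapsto R(0,\dots,y_\beta,\dots)$ extends meromorphically to a neighborhood of $\overline{\mathbb D}$. On the slice,
$$R=\kappa\,\frac{a_\beta}{a_\alpha}\bigl(f(0),g(c_1),\dots,g(y_\beta),\dots\bigr)\,g'(y_\beta),$$
where the constant $\kappa=1/g'(c_\alpha)$, or $1/f'(0)$ if $\alpha=0$, is finite and nonzero by injectivity of $g$ and $f$. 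Set
$$\eta:=\kappa\,\frac{a_\beta}{a_\alpha}\bigl(f(0),g(c_1),\dots,w,\dots\bigr)\,dw .$$
Since $V$ is a polydisc containing $\overline U$, this is a meromorphic $1$-form on a neighborhood of $\overline D$, and $R|_{\text{slice}}\,dy_\beta=g^*\eta$. Lemma~\ref{Lema-principal}(2) therefore gives the contradiction, provided $\eta\not\equiv0$.

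The main technical point is the choice of the slice, ensuring that the restrictions of $a_\alpha$ and $a_\beta$ are not identically zero, so that $\eta$ is a well-defined meromorphic form with $\eta\not\equiv 0,\infty$. Since $a_\alpha\,a_\beta\not\equiv 0$ on the connected open set $V\supset\overline U$, it is not identically zero on the open set $U$. We need a point with $x$-coordinate $f(0)$, though, so we use that the hyperplane $\{x=f(0)\}$ through the interior point $f(0)\in f(\mathbb D)$ can be avoided by a translation. Since the lemma only concerns behaviour near $\{0\}\times\overline{\mathbb D}^k$, we may also work with $x=x_0$ close to $0$: $W$ contains $\{x_0\}\times\overline{\mathbb D}^k$ by compactness. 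The product $a_\alpha a_\beta\circ h$ does not vanish identically on $\{|x|<\epsilon\}\times\mathbb D^k$, so for generic $(x_0,c)$ the one-variable restriction is not identically zero. Replacing $0$ by such a generic $x_0$ throughout, the argument above goes through.
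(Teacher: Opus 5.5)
Your proof is correct and follows the paper's argument. You note that ratios of coefficients of $h^*\omega$ extend meromorphically, restrict to a generic slice $\{x=x_0\}$ with $|x_0|$ small in a vertical variable $y_\beta$, and recognize the restriction as $g^*\eta$ with $\eta$ meromorphic near $\overline{D}$; Lemma~\ref{Lema-principal}(2) then forces $\eta\equiv 0$. The only difference is organizational: you argue by contradiction from two non-vanishing coefficients, whereas the paper splits into the cases $a\not\equiv 0$ and $a\equiv 0$ and shows directly that every other coefficient vanishes.
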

\begin{proof}
Write $\mathcal{D}=\{\omega = 0\}$ with $\omega=a\,dx + \sum_j b_j\, dy_j$ and $a, \, b_j \in \mathcal{O}(V)$. Suppose by contradiction that $h^* \mathcal{D}$ extends to a distribution on a neighborhood $\Omega$ of $\{0\}\times \overline{\mathbb{D}}^k$, and fix $\epsilon>0$ such that $\Omega_{\epsilon}:=\{(x,y)\in \mathbb{D}\times \mathbb{D}^k: \,\, |x|<\epsilon\}\subset \Omega$. Note that if $\widetilde\omega$ is a holomorphic $1$-form defining the extension on $\Omega_\epsilon$, then on $\Omega_\epsilon\cap (\mathbb{D}\times\mathbb{D}^k)$ we have $h^*\omega=m\,\widetilde\omega$ for some meromorphic function $m$; in particular, any quotient of $h^*\omega$ by one of its non zero coefficients extends meromorphically to $\Omega_\epsilon$, being equal to the corresponding quotient for $\widetilde\omega$.

We will show that at most one of the functions $a,b_1,\dots,b_k$ is not identically zero; this proves the lemma, for then $\DD$ is one of the coordinate distributions.

\noindent {\bf Case 1: $a \not\equiv 0$.} We show that $b_j \equiv 0$ for all $j$; by symmetry it suffices to prove $b_k\equiv 0$. We have
$$
h^*\omega= a(h) f'(x)dx + \sum_{j=1}^k b_j(h)g'(y_j)dy_j ,
$$
and, dividing by $a(h)f'(x)$ (which is $\not\equiv 0$ because $f$ is injective), the meromorphic $1$-form $\dfrac{h^*\omega}{a(h)f' (x)}$ extends meromorphically to $\Omega_{\epsilon}$, by the observation above. Given $(\tilde{x}, \tilde{y}_1, \ldots,\tilde{y}_{k-1})\in \mathbb{D}_{\epsilon}\times \mathbb{D}^{k-1}$ generic, consider the disc $D'=\{(\tilde{x}, \tilde{y}_1, \ldots, \tilde{y}_{k-1},y): y \in \mathbb{D} \}$. Since
$$\left.\frac{h^*\omega}{a(h)f'(x)}\right|_{D'}=\dfrac{1}{f'(\tilde{x})}\left( h|_{D'}\right)^*\left(\frac{\omega}{a} \Big|_{h(D')}\right),$$
we see that $\left( h|_{D'}\right)^*\left(\frac\omega{a} |_{h(D')}\right)$ extends meromorphically to a neighborhood of $\overline{D'}$. Now $h(D')$ is the disc $\{(f(\tilde x), g(\tilde y_1),\dots,g(\tilde y_{k-1}), w): w \in D\}$ and, in the coordinate $w$,
$$\frac{\omega}{a}\Big|_{h(D')}=\eta:=\frac{b_k}{a}\big(f(\tilde x), g(\tilde y_1),\dots,g(\tilde y_{k-1}), w\big)\, dw,$$
a meromorphic $1$-form on a neighborhood of $\overline{D}$ (recall that $\omega$ is holomorphic on $V\supset \overline{U}$). Moreover $\left( h|_{D'}\right)^*\eta = g^*\eta$ under the natural identifications. It follows from Lemma \ref{Lema-principal}(2) that $\eta \equiv 0$, that is, $b_k$ vanishes on $h(D')$. Letting the disc $D'$ vary we conclude that $b_k$ vanishes on the open set $U$, hence $b_k\equiv 0$ on $V$.

\noindent {\bf Case 2: $a\equiv 0$.} Then $b_{j_0} \not\equiv 0$ for some $j_0$, and we must show that $b_j\equiv 0$ for $j\neq j_0$. For simplicity assume $j_0=1$ and let us show that $b_k\equiv 0$ (the other cases are analogous). This time
$$
h^*\omega =  \sum_{j=1}^k b_j(h)g'(y_j) dy_j,
$$
and, as in the previous case, $\dfrac{h^*\omega}{b_1 (h)g'(y_1)}$ extends meromorphically to $\Omega_{\epsilon}$. Thus, if $D'$ is any disc as above, we have
$$
\left.\frac{h^*\omega}{b_1(h)g'(y_1)}\right|_{D'} = \frac{1}{g'(\tilde{y}_1)}\left( h|_{D'}\right)^*\left(\frac{1}{b_1}\omega \Big|_{h(D')}\right).
$$
We conclude as before, using Lemma \ref{Lema-principal}(2), that $\frac{1}{b_1}\omega |_{h(D')}\equiv 0$, which implies that $b_k \equiv 0$.
\end{proof}

\section{Construction of special manifolds}\label{sec:gluing}

In this section we use a gluing procedure to construct $(k+1)$-dimensional manifolds endowed with finitely many foliations. Fix
\[
h(x,y) = h(x, y_1, \ldots, y_k) = \bigl(f(x), g(y_1), \ldots, g(y_k)\bigr)
\]
as in the previous section, and recall that $\overline{U}=\overline{h(\mathbb{D} \times \mathbb{D}^k)}\subset \mathbb{D}_r^{k+1}$.

Let \(C \subset \mathbb{P}^{k+1}\) be a connected projective curve, and fix a smooth point \(p \in C\). Consider a neighborhood \(\tilde{V}\) of \(p\) with a chart \(\tilde{\psi}: \tilde{V} \to V\), where $V$ is a polydisc containing $\overline U$, such that \(\tilde{\psi}(p) = 0\) and \(\tilde{\psi}(C\cap \tilde V) = \{ y = 0 \}\).

Likewise, let $\pi: Bl_0\,\CC^{k+1}\to \CC^{k+1}$ be the blow-up of the origin, with exceptional divisor \(E =\pi^{-1}(0)\simeq \Pp^k\), and fix a point \(q \in E\). Take a neighborhood \(\tilde{W}\) of \(q\) with a chart \(\tilde{\varphi}: \tilde{W} \to \mathbb{D} \times \mathbb{D}^k\) such that \(\tilde{\varphi}(q) = 0\) and \(\tilde{\varphi}(E\cap \tilde W) = \{ x = 0 \}\). We assume that $\tilde\varphi$ is the restriction of a biholomorphism defined on a neighborhood of the closure of $\tilde W$; in particular, $\tilde{\varphi}$ extends as a homeomorphism between the closure of $\tilde{W}$ and $\overline{\mathbb{D}}\times \overline{\mathbb{D}}^k$. A concrete choice, which we fix from now on, is the following: let $(t,u)=(t,u_2,\dots,u_{k+1})$ be the standard chart of $Bl_0\,\CC^{k+1}$ in which $\pi(t,u)=(t,tu_2,\dots,tu_{k+1})$ and $E=\{t=0\}$, take $q$ the point with coordinates $(t,u)=0$, $\tilde W:=\{(t,u)\in \mathbb{D}\times\mathbb{D}^k\}$ and $\tilde\varphi:=(t,u)|_{\tilde W}$, with $x=t$ and $y_j=u_{j+1}$.

Define
\[
\tilde{h} := \tilde{\psi}^{-1} \circ h \circ \tilde{\varphi}: \tilde{W} \to \tilde{\psi}^{-1}(U)\subset \tilde V,
\]
a biholomorphism onto its image. We choose neighborhoods \(T_C\supset C\) in $\Pp^{k+1}$ and \(T_E\supset E\) in $Bl_0\,\CC^{k+1}$, and consider the space
\[
S_0 := T_C \sqcup_{\tilde{h}} T_E,
\]
the quotient of the disjoint union $T_C\sqcup T_E$ by the identification of $w\in \tilde W$ with $\tilde h(w)\in T_C$. The construction is summarized in Figure \ref{fig:gluing}. In general, \(S_0\) is a non-Hausdorff manifold. To ensure that \(S_0\) is Hausdorff, we specify \(T_C\) and \(T_E\) as follows.

\begin{figure}[ht]
\centering
\begin{tikzpicture}[>=Latex,line join=round,scale=1.0]
\tikzset{
  corr/.style={decorate,decoration={snake,amplitude=1.3pt,segment length=4.5pt}},
  redline/.style={red!80!black,line width=0.9pt},
  crv/.style={line width=1.1pt},
  box/.style={line width=0.8pt},
  chart/.style={dashed,line width=0.5pt,gray},
  lbl/.style={font=\footnotesize},
  slbl/.style={font=\scriptsize}
}

\begin{scope}[shift={(-0.7,4.3)}]
  \draw[chart] (-1.25,-1.2) rectangle (1.25,1.2);
  \fill[gray!12] (-0.85,0.6) rectangle (0.85,1.2);
  \fill[gray!12] (-0.85,-1.2) rectangle (0.85,-0.6);
  \draw[box] (-2.9,0.6) -- (-0.85,0.6);
  \draw[box,corr] (-0.85,0.6) -- (0.85,0.6);
  \draw[box] (0.85,0.6) -- (2.7,0.6);
  \draw[box] (-2.9,-0.6) -- (-0.85,-0.6);
  \draw[box,corr] (-0.85,-0.6) -- (0.85,-0.6);
  \draw[box] (0.85,-0.6) -- (2.7,-0.6);
  \draw[box] (-2.9,-0.6) -- (-2.9,0.6);
  \draw[box] (2.7,-0.6) -- (2.7,0.6);
  \draw[redline] (-0.85,-1.2) -- (-0.85,1.2);
  \draw[redline] (0.85,-1.2) -- (0.85,1.2);
  \draw[crv] (-2.9,0) .. controls (-0.5,0.04) and (1.2,0.06) .. (2.7,0.1);
  \fill (0,0) circle (1.4pt) node[above,slbl] {$p$};
  \node[slbl] at (2.35,0.34) {$C$};
  \node[lbl] at (2.6,0.86) {$T_C$};
  \node[gray,slbl] at (-1.5,1.12) {$\tilde V$};
  \node[gray!55!black,slbl] at (0.02,0.9) {$K$};
  \node[slbl] at (0,-1.5) {$\tilde\psi^{-1}(U)$};
  \node[red!80!black,slbl] at (1.04,1.36) {$\Gamma$};
\end{scope}

\begin{scope}[shift={(6.3,4.3)}]
  \draw[box] (-0.75,-1.8) rectangle (0.75,1.8);            
  \draw[crv] (0,-1.8) -- (0,1.8);                           
  \draw[box] (-0.62,-0.62) rectangle (0.62,0.62);           
  \fill (0,0) circle (1.4pt) node[right,slbl] {$q$};
  \node[slbl] at (0.24,1.55) {$E$};
  \node[lbl] at (-1.05,1.75) {$T_E$};
  \node[slbl] at (0.85,0.78) {$\tilde W$};
\end{scope}

\begin{scope}[shift={(-0.7,0)}]
  \draw[box] (-1.2,-0.88) rectangle (1.2,0.88);            
  \draw[box,corr] (-0.83,0.52) -- (0.83,0.52);
  \draw[box,corr] (-0.83,-0.52) -- (0.83,-0.52);
  \draw[box] (-0.83,-0.52) -- (-0.83,0.52);
  \draw[box] (0.83,-0.52) -- (0.83,0.52);
  \draw[crv] (-1.2,0) -- (1.2,0);                           
  \fill (0,0) circle (1.4pt) node[above,slbl] {$0$};
  \node[lbl] at (1.46,0.88) {$V$};
  \node[slbl] at (0.55,0.30) {$U$};
  \node[slbl] at (-2.75,0) {$\{y=0\}=\tilde\psi(\tilde V\cap C)$};
\end{scope}

\begin{scope}[shift={(6.3,0)}]
  \draw[box] (-1.0,-0.88) rectangle (1.0,0.88);
  \draw[crv] (0,-0.88) -- (0,0.88);                         
  \fill (0,0) circle (1.4pt) node[above right,slbl] {$0$};
  \node[lbl] at (1.5,-0.88) {$\mathbb{D}\times\mathbb{D}^k$};
  \node[slbl] at (0,-1.18) {$\tilde\varphi(E\cap\tilde W)=\{x=0\}$};
\end{scope}

\draw[->] (-0.7,3.35) -- (-0.7,1.15) node[midway,left,slbl] {$\tilde\psi$};
\draw[->] (6.3,2.4) -- (6.3,1.05) node[midway,right,slbl] {$\tilde\varphi$};

\draw[->] (5.35,4.55) to[bend right=8] node[midway,above,slbl] {$\tilde h$} (2.25,4.7);
\draw[->] (5.1,0) -- (0.65,0) node[midway,above,slbl] {$h$};
\end{tikzpicture}
\caption{The gluing $S_0:=T_C\sqcup_{\tilde h}T_E$. Downstairs, the map $h(x,y)=(f(x),g(y_1),\dots,g(y_k))$ sends the smooth box $\mathbb{D}\times\mathbb{D}^k$ onto the ``horn'' $U=f(\mathbb{D})\times D^k\subset V$: since $g$ uniformizes the Jordan domain $D$, whose boundary contains no $C^1$-arc, the top and bottom sides of $U$ become wavy. Upstairs, $\tilde h=\tilde\psi^{-1}\circ h\circ\tilde\varphi$ carries the small neighborhood $\tilde W$ of $q\in E$ onto $\tilde\psi^{-1}(U)$, gluing the neighborhood $T_E$ of the \emph{whole} exceptional divisor $E\simeq\mathbb{P}^k$ (drawn as a vertical line) to the neighborhood $T_C$ of the curve $C$; thus $E$ and $C$ meet transversely at the single point $p=q$. The chart box $\tilde V$ is a larger polydisc containing $\tilde\psi^{-1}(U)$; the strip $\Gamma$, between the red lines, is the horn prolonged vertically up to $\tilde V$, and $T_C$ excludes the closure of $K=\Gamma\setminus\tilde\psi^{-1}(U)$, so its boundary runs \emph{along} the wavy sides of the horn.}
\label{fig:gluing}
\end{figure}  On the one hand, for \(T_E\) we take
\[
T_E:=\pi^{-1}(\mathbb{D}_{\varepsilon}^{k+1})\cup \tilde W \qquad (0<\varepsilon<1),
\]
which is a connected neighborhood of $E$ satisfying
\begin{equation}\label{eq:TEconditions}
\tilde{W} \subset T_E, \qquad
T_E \cap \tilde{\varphi}^{-1}(\partial \mathbb{D} \times \overline{\mathbb{D}}^k) = \varnothing,
\qquad \pi(T_E)\subset \mathbb{D}^{k+1};
\end{equation}
indeed, a point of $\tilde\varphi^{-1}(\partial\mathbb{D}\times\overline{\mathbb{D}}^k)$ has $|t|=1$, hence its image by $\pi$ has first coordinate of modulus $1>\varepsilon$, and $\pi(\tilde W)\subset \mathbb{D}^{k+1}$ because $|t|,|tu_j|<1$ on $\tilde W$.

On the other hand, let \(\Gamma \subset \tilde{V}\) be defined, in the chart \((x,y)\) given by \(\tilde{\psi}\), as  $\Gamma := \{(x,y)\in V : x \in f(\mathbb{D}) \}$; observe that \(\tilde{h}(\tilde{W}) =\tilde\psi^{-1}(U)\subset \Gamma\) and that $\Gamma$ decomposes as the disjoint union of $\tilde h(\tilde W)$ and $K:=\tilde\psi^{-1}\big(f(\mathbb{D})\times (V_2\setminus D^k)\big)$, where $V_2$ denotes the product of the last $k$ factors of the polydisc $V$. Since $0\in D$ and $D$ is open, the curve $C$ does not meet the closure of $K$, and we may define
$$T_C:= (N\cup \tilde V)\setminus \overline{K},$$
where $N$ is any neighborhood of $C$ in $\Pp^{k+1}$. Then $T_C$ is an open neighborhood of $C$ satisfying $T_C \cap \Gamma = \tilde{h}(\tilde{W})$.

\medskip
\noindent{\bf The manifold $S_0$ is Hausdorff.} Suppose by contradiction that $a\in T_C$ and $b \in T_E$ define distinct non-separated points of $S_0$. Then there is a sequence $(w_n)\subset \tilde W$ with $w_n \to b$ in $T_E$ and $\tilde h(w_n)\to a$ in $T_C$; moreover $b\notin \tilde W$, for otherwise $a=\tilde h(b)$ would be identified with $b$. Write $\tilde\varphi(w_n)=(x_n,y_n)$; up to a subsequence, $(x_n,y_n)\to (\bar x, \bar y)\in \partial(\mathbb{D}\times\mathbb{D}^k)$, and $b=\tilde\varphi^{-1}(\bar x,\bar y)$ (extended homeomorphism). If $\bar x \in \partial \mathbb{D}$, then $b \in \tilde\varphi^{-1}(\partial\mathbb{D}\times\overline{\mathbb{D}}^k)$, contradicting \eqref{eq:TEconditions}. Hence $\bar x \in \mathbb{D}$ and $|\bar y_j|=1$ for some $j$. Then
$$\tilde\psi(a)=\lim h(x_n,y_n) = (f(\bar x), g(\bar y_1),\dots,g(\bar y_k)),$$
whose first coordinate lies in $f(\mathbb{D})$ and whose $j$-th ``vertical'' coordinate $g(\bar y_j)$ lies in $\mathcal{C}=\partial D$. The first condition gives $a \in \Gamma\cap T_C=\tilde h(\tilde W)=\tilde\psi^{-1}(U)$, so all vertical coordinates of $\tilde\psi(a)$ lie in the open set $D$ -- contradicting $g(\bar y_j)\in \partial D$. This proves the claim. (Non-separated pairs with both points in $T_C$, or both in $T_E$, do not occur, since $\tilde h$ is injective.)

\medskip
With these choices, we obtain a complex manifold
\[
S_0 := T_C \sqcup_{\tilde{h}} T_E
\]
containing copies of both neighborhoods via the natural inclusions
\(T_C \hookrightarrow S_0\) and \(T_E \hookrightarrow S_0\). Observe that, inside $S_0$, the curve $C$ and the divisor $E$ intersect transversely at the single point $p$: indeed, $q\in E$ is identified with $\tilde h(q)=\tilde\psi^{-1}(h(0))=p$, and near this point $E$ is contained in $\{x=0\}$ while $C=\{y=0\}$, in the chart $\tilde\psi$.

We state the main result of this section.

\begin{thm}\label{thm:coordinate}
Let \(\mathcal{D}\) be a codimension one distribution on \(S_0\).
Then the restriction \(\mathcal{D}|_{T_C}\) extends to a distribution $\widehat{\DD}$ on \(\mathbb{P}^{k+1}\), and \(\tilde{\psi}_* (\widehat{\DD}|_{\tilde V})\) is generated by one of the forms in the set
\(\{ dx, dy_1, \ldots, dy_k \}\).
In particular, \(\mathcal{D}\) is integrable.
\end{thm}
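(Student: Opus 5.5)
The plan is to first extend $\DD|_{T_E}$ across $E$ to all of $\CC^{k+1}$, then transport it to the chart $V$ through the gluing and apply Lemma \ref{lema-sobre-h}, and finally extend to $\Pp^{k+1}$.

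\emph{Step 1: extension from $T_E$.} The restriction $\DD|_{T_E}$ is a distribution on a neighborhood of $E$ in $Bl_0\,\CC^{k+1}$. Its pushforward $\pi_*\DD$ is a distribution on the punctured neighborhood $\pi(T_E)\setminus\{0\}\supset \mathbb{D}^{k+1}_\varepsilon\setminus\{0\}$. Since $k+1\geq 2$, we can take a defining $1$-form on this punctured ball: its coefficients are holomorphic off the origin, and the line bundle is trivial there because $H^1$ and $H^2$ of a punctured ball vanish in dimension $\geq 3$. For dimension $2$ we argue with the Levi/Hartogs extension of the meromorphic quotients $\omega_i/\omega_j$ instead. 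By Hartogs, this form extends to a form defined on $\mathbb{D}^{k+1}_\varepsilon$. I expect it is cleaner to work directly with the meromorphic ratios $b_j/a$ (or $a/b_j$), which are meromorphic on the punctured ball and therefore extend meromorphically by Levi's theorem. Either way we obtain a distribution $\DD_0$ on the polydisc $\mathbb{D}^{k+1}_\varepsilon$ with $\pi^*\DD_0=\DD$ near $E$. By $(\star)$, applied on the connected $T_E$, $\pi^*\DD_0=\DD$ on $\pi^{-1}(\mathbb{D}^{k+1}_\varepsilon)$. Globally on $\CC^{k+1}$ we cannot control $\DD_0$, but we only need it on $\tilde W$.

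\emph{Step 2: transport to the horn and apply Lemma \ref{lema-sobre-h}.} This is the main obstacle, because the pulled-back object must live near $\{0\}\times\overline{\mathbb D}^k$ while the distribution living on a polydisc containing $\overline U$ comes from $T_C$. So I reverse the roles. The distribution $\DD|_{T_C}$, read in the chart $\tilde\psi$, is defined on $\tilde\psi(T_C\cap\tilde V)\supset U$, but not necessarily on a polydisc containing $\overline U$, since $T_C$ omits $\overline K$. On the other hand, $\tilde\varphi_*(\DD|_{T_E})$ is defined on all of $\tilde\varphi(T_E\cap \text{chart})$, which contains a neighborhood of $\{0\}\times\overline{\mathbb D}^k$: the standard chart $(t,u)$ near $t=0$ covers $E$ minus a hyperplane, and points with $|u_j|\le 1$, $|t|$ small lie in $\pi^{-1}(\mathbb{D}_\varepsilon^{k+1})$. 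Gluing then says $h^*(\tilde\psi_*\DD|_{U})=\tilde\varphi_*\DD|_{\mathbb D\times\mathbb D^k}$, and this extends to a neighborhood of $\{0\}\times\overline{\mathbb D}^k$. To invoke Lemma \ref{lema-sobre-h} I still need $\tilde\psi_*\DD$ to be defined on a polydisc $V'\supset\overline U$. For this, I would use that the chart $\tilde V$ misses only $\overline K$, a product region $f(\mathbb D)\times(V_2\setminus D^k)$. Then I would argue that on the set $V\setminus f(\overline{\mathbb D})\times V_2$ together with $U$ and a neighborhood of $\{y=0\}$, the ratios extend by Hartogs-type extension in the $x$ variable: for fixed $y$, $x$ ranges over $V_1$ minus $f(\mathbb D)$, and this set of holomorphy is Hartogs-fillable, since its complement in the $x$-disc is relatively compact. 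Concretely, I would apply Hartogs' figure to the meromorphic coefficient ratios in the variables $(x,y)$, with the domain containing $\{|x|\text{ near }\partial V_1\}\times V_2\cup V_1\times(\text{small }y)$. This is the delicate point, and I would try it first; if necessary I would shrink $V$ and use that $f(\overline{\mathbb D})\Subset V_1$.

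\emph{Step 3: conclusion.} Once $\tilde\psi_*\DD$ is a distribution on a polydisc containing $\overline U$, Lemma \ref{lema-sobre-h} forces it to be generated by one of $dx,dy_1,\dots,dy_k$. That form is defined on all of $V$, so $\DD|_{T_C}$ extends across $\overline K$ to $N\cup\tilde V$, agreeing by $(\star)$. A coordinate distribution near $p$ that extends to a neighborhood of $C$ then extends to $\Pp^{k+1}$ by Hartogs/Levi extension of meromorphic coefficient ratios from a neighborhood of a connected curve; I expect this to hold because a curve in $\Pp^{k+1}$ is $1$-concave. Integrability follows from $(\star)$, since coordinate forms are closed.
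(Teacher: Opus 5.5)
Your argument is sound in substance, but it is organized differently from the paper's.

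\textbf{How the paper proceeds.} The paper first applies the global extension theorem \cite[Theorem 3.1]{Ros} to $\DD|_{T_C}$, obtaining $\widehat\DD$ on $\Pp^{k+1}$. Then $\tilde\psi_*(\widehat\DD|_{\tilde V})$ is automatically a distribution on the whole polydisc $V\supset\overline U$, and it agrees with $\tilde\psi_*\DD$ on $U$. Lemma \ref{lema-sobre-h} is then played against the chart $(t,u)$ on $T_E$, exactly as in your Step 2.

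\textbf{How you proceed.} You get the distribution on $V$ locally. The set $\tilde\psi(T_C\cap\tilde V)=V\setminus\big(f(\overline{\mathbb D})\times(V_2\setminus D^k)\big)$ is connected. It contains the Hartogs figure $(V_1\setminus\overline{\mathbb D}_\rho)\times V_2\cup V_1\times\mathbb D_\delta^k$ for suitable $\rho,\delta$, because $f(\overline{\mathbb D})$ is compact in the disc $V_1$ and $0\in D$. So Levi's theorem extends the meromorphic coefficient ratios to $V$. Clearing denominators on the polydisc then yields the required distribution. This is correct, and it buys something: the coordinate conclusion and the integrability of $\DD$ follow from a purely local extension theorem. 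Your Step 1 is unnecessary: as you observe in Step 2, $\tilde\varphi_*(\DD|_{T_E})$ is already defined near $\{0\}\times\overline{\mathbb D}^k$ in the chart $(t,u)$.

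\textbf{The one gap: Step 3.} Step 3 carries the first assertion of the theorem, and here the proposal is not yet a proof. Extending a distribution, or the meromorphic ratios of its coefficients, from a connected neighbourhood of a curve to all of $\Pp^{k+1}$ is a global statement. It is not a Hartogs/Levi phenomenon. Concavity of the neighbourhood is the right heuristic, but you must invoke a precise theorem, and that theorem is exactly \cite[Theorem 3.1]{Ros}, the result the paper uses.

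Once you invoke it, it gives you $\widehat\DD$ on all of $\tilde V$. The Hartogs-figure step you identified as the main obstacle then becomes superfluous, and your proof reduces to the paper's. Your route is more economical only if you want the second assertion and integrability without the global extension theorem.
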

\begin{proof}
For the first statement, observe that \(\mathcal{D}|_{T_C}\) defines a distribution on a
neighborhood of a positive-dimensional subvariety of the projective space
\(\mathbb{P}^{k+1}\).
By \cite[Theorem 3.1]{Ros}, such a distribution extends to a global distribution $\widehat\DD$ on
\(\mathbb{P}^{k+1}\), which agrees with $\DD$ on $T_C$.

For the second statement, suppose that
\(\tilde{\psi}_* (\widehat{\DD}|_{\tilde V})\), which is a distribution on the whole polydisc $V\supset \overline U$, is not generated by any of the forms
\(dx, dy_1, \ldots, dy_k\).
By Lemma~\ref{lema-sobre-h}, the pullback $h^*\big(\tilde\psi_*(\widehat\DD|_{\tilde V})\big)$ cannot
extend to any neighborhood of \(\{0\} \times \overline{\mathbb{D}}^k\).
On the other hand, $\DD$ is also defined on $T_E$, which contains the compact set $\tilde\varphi^{-1}(\{0\}\times \overline{\mathbb{D}}^k)\subset E$; expressing $\DD|_{T_E}$ in the chart $(t,u)$ -- which, by our choice of $\tilde\varphi$, is defined on a neighborhood of $\overline{\mathbb{D}}\times\overline{\mathbb{D}}^k$ -- we obtain a distribution on a neighborhood of $\{0\}\times\overline{\mathbb{D}}^k$ in $\CC^{k+1}$ whose restriction to $\mathbb{D}\times\mathbb{D}^k$ is exactly $\tilde\varphi_*(\DD|_{\tilde W})=h^{*}\big(\tilde\psi_*(\DD|_{\tilde h(\tilde W)})\big)=h^*\big(\tilde\psi_*(\widehat\DD|_{\tilde V})\big)|_{\mathbb{D}\times\mathbb{D}^k}$, where we used that $\tilde h=\tilde\psi^{-1}\circ h\circ\tilde\varphi$ identifies the two restrictions of $\DD$. This is the desired contradiction.
Therefore \(\tilde{\psi}_* (\widehat\DD|_{\tilde V})\) is generated by one of the coordinate forms; these being integrable, $\DD$ is integrable on a non empty open subset of the connected manifold $S_0$, hence on all of $S_0$ by $(\star)$.
\end{proof}

\begin{remark}\label{rem:count}
Recall that \( \mathrm{Distr}(S) \) denotes the set of codimension one distributions on a complex manifold $S$, and \( \mathrm{Fol}(S) \) the set of codimension one foliations. The construction above, suitably specified, provides examples of manifolds for which \( \mathrm{Distr}(S) \) is finite; this will be carried out in Section \ref{sec:proofA}, proving Theorem \ref{thm:A}. Moreover, in these examples we have
\[
\#\mathrm{Distr}(S) \leq \dim S \quad \text{and} \quad \mathrm{Distr}(S) = \mathrm{Fol}(S).
\]
These examples raise the following natural questions.
\end{remark}
\begin{question}\label{quest}
Let $S$ be a connected complex manifold.
\begin{enumerate}
    \item If \( \mathrm{Distr}(S) \) is finite, is it true that \( \#\mathrm{Distr}(S) \leq \dim S \)?
    \item If \( \mathrm{Distr}(S) \) is finite, is it necessarily true that \( \mathrm{Distr}(S) = \mathrm{Fol}(S) \)?
    \item If \( \mathrm{Distr}(S) \) is infinite, can \( \mathrm{Fol}(S) \) still be finite and non empty?
    \item If \( \mathrm{Fol}(S) \) is finite, is it true that \( \#\mathrm{Fol}(S) \leq \dim S \)?
\end{enumerate}
\end{question}

Proposition \ref{prop:genposbound} below answers question (1) affirmatively, for an arbitrary complex manifold. Questions (2), (3) and (4) remain open. Let us stress that these questions are only meaningful for non-algebraic manifolds: as observed in Remark \ref{rem:BP}, a projective manifold of dimension at least two always carries infinitely many codimension one foliations.

\begin{remark}\label{rem:BP}
It is instructive to contrast Theorem \ref{thm:A} with the projective situation, where the phenomenon we are after simply cannot occur. Indeed, if $X$ is a projective manifold with $\dim X=n\geq 2$, then $\CC(X)$ has transcendence degree $n\geq 2$ over $\CC$, so we may pick algebraically independent $f,g\in \CC(X)$; by Proposition \ref{prop:pencil}, the pencil $\{d(f+tg)=0\}_{t\in\CC}$ already consists of pairwise distinct foliations. Hence
$$\Fol(X)\ \text{ is infinite for every projective manifold } X \text{ with } \dim X\geq 2,$$
and questions (1)--(4) of \ref{quest} are empty in the projective category: finiteness of $\Fol$ or $\Distr$ is a genuinely non-algebraic, non-compact phenomenon. This is what makes germs of neighborhoods the natural setting for Theorem \ref{thm:A}, and it explains why our manifolds $S$ must have few meromorphic functions (Theorem \ref{thm:B}): by the same pencil argument, two functionally independent meromorphic functions would already force infinitely many foliations.

The meaningful projective question is therefore not how many foliations $X$ carries, but how many codimension one foliations can contain a \emph{fixed} foliation $\GG$; equivalently, when the intersection $\FF_1\cap\dots\cap\FF_m$ of $m$ pairwise distinct codimension one foliations -- the foliation whose tangent sheaf is $T\FF_1\cap\dots\cap T\FF_m$, of codimension at most $\min(m,n)$ -- has smaller codimension than expected. Such \emph{unlikely intersections} are the subject of the work of Barbosa and Pereira \cite{BP}, whose main finiteness statement is \cite[Theorem D]{BP}:
\begin{center}
\emph{if a foliation $\GG$ of codimension $q$ is contained in at least $q+1$ pairwise distinct codimension one foliations, then it is contained in infinitely many of them.}
\end{center}
Note that the naive count is sharp: a codimension $q$ foliation is contained in at most $q$ codimension one foliations, unless it is contained in infinitely many. The bound cannot be improved, as Section \ref{sec:pereira} shows: there we exhibit a codimension two foliation $\GG$ on $\Pp^3$ contained in exactly two codimension one foliations.

Let us briefly indicate what lies behind \cite[Theorem D]{BP}, since it illuminates the difference with our non compact examples. The proof analyzes the transverse structure of $\GG$ according to the transcendence degree $\delta$ of the field $\CC(X/\GG)$ of rational first integrals of $\GG$. If $\delta\geq 2$, pencils of first integrals immediately produce infinitely many foliations containing $\GG$ -- this is Proposition \ref{prop:pencil} again, and it is also the (degenerate) reason why $\Fol(X)$ is always infinite, since the foliation by points has $\CC(X/\GG)=\CC(X)$. The substance of the theorem is thus in the range $\delta\leq 1$: there the abundance of foliations containing $\GG$ forces on it a rich transverse structure -- transversely affine, projective or Lie -- and the symmetries of this structure generate pencils, or even conics, of integrable $1$-forms containing $\GG$ \cite[Section 8]{BP}. The case $q=2$ rests on the study of pencils of integrable $1$-forms, which goes back to Cerveau's work on $\mathbb{P}^3$ \cite{Cer} and is extended to arbitrary projective manifolds in \cite[Theorem A]{BP}. A further reduction, \cite[Lemma 8.1]{BP}, allows one to replace an arbitrary family realizing an unlikely intersection by a minimal one, for which all the $q$-fold intersections coincide.
\end{remark}

\begin{remark}\label{rem:genpos}
Recall that a finite set of points $\{p_0, p_1, \dots, p_m\} \subset \mathbb{P}^d$ is said to be in \emph{general position} if no subset of $j+1$ of them, with $j \le d$, lies in a projective subspace of dimension smaller than $j$. It is a classical fact that any set of $d+2$ points of $\mathbb{P}^d$ in general position determines a unique projective automorphism
\(\varphi \in \mathrm{PGL}_{d+1}\)
sending these points to the standard projective frame $[1\!:\!0\!:\!\dots\!:\!0],\; [0\!:\!1\!:\!\dots\!:\!0],\; \dots,\;
[0\!:\!\dots\!:\!1],\; [1\!:\!1\!:\!\dots\!:\!1]$. This is the geometry underlying the following proposition.
\end{remark}

\begin{prop}\label{prop:genposbound}
Let $S$ be a connected complex manifold. If $\mathrm{Distr}(S)$ is finite, then $\#\,\mathrm{Distr}(S) \le \dim S$.
Moreover, the distributions are in general position: if $\mathrm{Distr}(S)=\{\DD_1,\dots,\DD_m\}$, then for a generic point $p \in S$ the projective classes $[\mathcal{D}_1(p)], \ldots, [\mathcal{D}_m(p)]$, viewed as points of $\mathbb{P}T^*_pS$, are in general position.
\end{prop}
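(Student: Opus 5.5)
The plan is to reduce both statements to one claim: \emph{if some of the distributions have linearly dependent conormal directions at a generic point, then $S$ carries infinitely many distributions.} Let $n=\dim S$ and $\mathrm{Distr}(S)=\{\DD_1,\dots,\DD_m\}$. Near any point, $\DD_i$ is defined by a holomorphic $1$-form $\omega_i$ whose singular set has codimension at least two, and $\omega_i$ is unique up to a unit. Globally, $\DD_i$ is given by a twisted $1$-form $\omega_i\in H^0(S,\Omega^1_S\otimes N_i)$ for a line bundle $N_i$.

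Suppose some subfamily of the classes $[\DD_i(p)]$ is linearly dependent at a generic point. Then I choose a subfamily that is minimal with this property, say $\DD_1,\dots,\DD_{j+1}$. By $(\star)$ and the identity theorem, dependence on an open set is the same as dependence everywhere, so "generic" is unambiguous. The cases are $j\geq 1$: $j=0$ cannot occur, since each $\omega_i$ is nonzero at generic points. By minimality, $\omega_1,\dots,\omega_j$ are pointwise independent off a proper analytic subset. Hence on a neighborhood of any point there are unique meromorphic functions $a_i$ with
$$\omega_{j+1}=a_1\omega_1+\dots+a_j\omega_j .$$
Again by minimality, no $a_i$ vanishes identically.

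Uniqueness is what makes the construction global. Each term $a_i\omega_i$ is a well-defined meromorphic section of $\Omega^1_S\otimes N_{j+1}$: if we replace $\omega_{j+1}$ by $u\,\omega_{j+1}$ and $\omega_i$ by $v_i\omega_i$ with units $u,v_i$, then $a_i$ becomes $ua_i/v_i$, and the product $a_i\omega_i$ transforms only through $u$. So for $t\in\CC^*$ the twisted meromorphic form
$$\eta_t:=a_1\omega_1+t\,a_2\omega_2+a_3\omega_3+\dots+a_j\omega_j$$
is globally defined and not identically zero. Locally I clear the denominators of $\eta_t$ and divide by the gcd of its coefficients; this uses that the local rings $\mathcal{O}_{S,p}$ are UFDs. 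The result is a holomorphic form whose singular set has codimension at least two. Different local choices differ by units, so $\eta_t$ defines a distribution $\DD^{(t)}\in\mathrm{Distr}(S)$.

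These distributions are pairwise distinct. At a generic point $p$ the covectors $\omega_1(p),\dots,\omega_j(p)$ are independent and all $a_i(p)\neq 0,\infty$. Therefore $t\neq t'$ gives non-proportional covectors $\eta_t(p)$ and $\eta_{t'}(p)$. This yields infinitely many distributions on $S$, a contradiction.

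Both assertions then follow. For any finite family $\DD_1,\dots,\DD_m$, each condition "$\omega_{i_1},\dots,\omega_{i_r}$ are dependent at $p$" defines an analytic subset of $S$. By the claim each such subset is proper, so the finitely many of them avoid a dense open set of points $p$, and at such $p$ all of $\omega_1(p),\dots,\omega_m(p)$ are independent. Independence of all of them forces $m\leq n$, since there are $m$ independent covectors in the $n$-dimensional space $T^*_pS$. Independence also says exactly that $[\DD_1(p)],\dots,[\DD_m(p)]$ are in general position in $\mathbb{P}T^*_pS\simeq\mathbb{P}^{n-1}$ (Remark \ref{rem:genpos}): with $m\leq n$, every $r$ of these points span a projective subspace of dimension $r-1$.

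The main obstacle I expect is the globalization step: checking carefully that $a_i\omega_i$ is independent of the local choices, and that saturating $\eta_t$ really produces an honest distribution on all of $S$, including along the polar and degeneracy loci of the $a_i$. I would handle this through the twisted-form formalism above. A secondary point is to justify that "dependent at a generic point" is an analytic condition, which holds because it is the vanishing of all $r\times r$ minors of the coefficient matrix of $\omega_{i_1},\dots,\omega_{i_r}$.
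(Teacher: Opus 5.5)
Your argument is essentially the paper's: a minimal dependent subfamily, meromorphic coefficients $a_i$ whose products $a_i\omega_i$ are globally meaningful because they all pick up the same factor, and an infinite family of pairwise distinct distributions. Your $\eta_t$ is simply a line inside the paper's family $\omega_\mu$, $\mu\in\Pp^{d-1}$.

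There is one small omission. Your $\eta_t$ needs an $a_2\omega_2$ term, so you must exclude $j=1$ explicitly, not only $j=0$. For $j=1$ you would have $\omega_2=a_1\omega_1$ with both forms saturated, which forces $\DD_1=\DD_2$ and contradicts pairwise distinctness. This is the only place that hypothesis enters, and it is exactly the paper's remark that $d\geq 2$.
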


\begin{proof}
Here $[\DD_i(p)]\in \Pp T^*_pS$ denotes the conormal direction of $\DD_i$ at a generic point $p$. We claim that for any subset  $\{\mathcal{D}_{i_0}, \ldots, \mathcal{D}_{i_d}\} \subset \Distr(S)$ of $d+1$ pairwise distinct distributions and generic $p \in S$, the points $[\mathcal{D}_{i_0}(p)], \ldots, [\mathcal{D}_{i_d}(p)] \in \mathbb{P}T^*_pS$ span a projective subspace of dimension $d$, i.e.\ the corresponding conormal directions are linearly independent. The claim proves both statements: general position follows by definition, and if $\#\Distr(S)\geq \dim S+1=:m'+1$, a subset with $m'+1$ elements would span a $\mathbb{P}^{m'}$ inside $\Pp T^*_pS\simeq \Pp^{m'-1}$, which is absurd.

Suppose, by contradiction, that the claim fails, and let $d$ be the minimal integer for which it fails, say for the subset $\{\DD_0,\dots,\DD_d\}$ (after relabeling). Since the distributions are pairwise distinct we necessarily have $d\geq 2$: for $d=1$, the failure would mean $[\DD_0(p)]=[\DD_1(p)]$ for generic $p$, i.e.\ $\DD_0=\DD_1$. Choose local holomorphic $1$-forms $\omega_0,\dots,\omega_d$ generating $\DD_0,\dots,\DD_d$ on some open set. By minimality of $d$, the forms $\omega_0(p),\dots,\omega_{d-1}(p)$ are linearly independent for generic $p$, while
$$\omega_d = \lambda_0\,\omega_0+\dots+\lambda_{d-1}\,\omega_{d-1}$$
for uniquely determined meromorphic functions $\lambda_0,\dots,\lambda_{d-1}$ (solve the linear system pointwise; the solutions are meromorphic by Cramer's rule). Moreover $\lambda_i\not\equiv 0$ for every $i$: if $\lambda_i\equiv 0$, then the $d$ distributions $\{\DD_j\}_{j\neq i, j<d}\cup\{\DD_d\}$ would have linearly dependent conormals at the generic point, contradicting the minimality of $d$.

Now, for $\mu=[\mu_0:\dots:\mu_{d-1}]\in \Pp^{d-1}$, consider the meromorphic $1$-form
$$\omega_{\mu}:=\mu_0\lambda_0\,\omega_0+\dots+\mu_{d-1}\lambda_{d-1}\,\omega_{d-1}$$
and let $\DD_{\mu}$ be the codimension one distribution it defines (after clearing denominators and saturating). The functions $\lambda_i$ play no role in the pointwise linear algebra; their sole purpose is to make the above combination \emph{globally} meaningful. Indeed, the naive combination $\mu_0\omega_0+\dots+\mu_{d-1}\omega_{d-1}$ would not do: the generators $\omega_i$ are only defined up to multiplication by nowhere vanishing holomorphic functions $g_i$, which may be chosen independently of each other, and such a combination is turned into $\sum_i\mu_ig_i\omega_i$, which is not proportional to it. The coefficients $\lambda_i$ normalize the generators so that all the terms acquire one and the same factor: replacing $\omega_i$ by $g_i\omega_i$ and $\omega_d$ by $g_d\omega_d$ changes $\lambda_i$ into $\lambda_ig_d/g_i$, whence
$$\lambda_i\omega_i\ \longmapsto\ \frac{\lambda_ig_d}{g_i}(g_i\omega_i)=g_d\,\lambda_i\omega_i \qquad \text{and} \qquad \omega_\mu\longmapsto g_d\,\omega_\mu.$$
Equivalently, the $1$-forms $\eta_i:=\lambda_i\omega_i$ still generate $\DD_i$ (because $\lambda_i\not\equiv0$) and are normalized by the condition $\eta_0+\dots+\eta_{d-1}=\omega_d$, which determines them up to a common factor. Therefore the locally defined distributions glue to a global $\DD_{\mu}\in \Distr(S)$. Finally, for generic $p$ the vectors $\omega_0(p),\dots,\omega_{d-1}(p)$ are linearly independent and $\lambda_i(p)\neq 0$, so distinct values of $\mu$ give distinct conormal directions $[\omega_{\mu}(p)]$, hence distinct distributions. We have produced an injection $\mathbb{P}^{d-1}  \hookrightarrow \Distr(S)$ with $d-1\geq 1$, so $\Distr(S)$ is infinite -- a contradiction.
\end{proof}

\begin{remark}
The distributions $\DD_\mu$ produced in the proof have no reason to be integrable, even when the $\DD_i$ are; this is why the argument does not answer question (4) of \ref{quest}. For pencils ($d=2$) of \emph{integrable} forms on projective manifolds, integrability of the whole family is precisely the situation studied in \cite{Cer, BP}, cf.\ Remark \ref{rem:BP}.
\end{remark}

\section{Proof of Theorem \ref{thm:A}}\label{sec:proofA}

We keep the notation of Section \ref{sec:gluing}: $\pi: Bl_0\,\CC^{k+1}\to \CC^{k+1}$ is the blow-up of the origin, $E=\pi^{-1}(0)$, and $(t,u)$ is the standard chart with $\pi(t,u)=(t,tu_2,\dots,tu_{k+1})$.

The three auxiliary lemmas of this section are standard, and will certainly be clear to specialists; we claim no originality for them and include complete proofs only in order to keep the paper self-contained and readable by non-specialists.

We first record the elementary but crucial fact that all analytic objects on a neighborhood of $E$ come from the ball, and that the coordinate foliations of the chart $(t,u)$ are globally defined along $E$.

\begin{lemma}\label{lem:Eside}
Let $T_E$ be a connected neighborhood of $E$ in $Bl_0\,\CC^{k+1}$.
\begin{enumerate}[(1)]
\item For every $F\in \MM(T_E)$ there exist $\varepsilon>0$ with $\pi^{-1}(\mathbb{D}^{k+1}_{\varepsilon})\subset T_E$ and $G\in \MM(\mathbb{D}^{k+1}_{\varepsilon})$ such that $F=G\circ \pi$ on $\pi^{-1}(\mathbb{D}^{k+1}_{\varepsilon})$. The analogous statement holds for codimension one distributions and foliations.
\item Each of the $k+1$ coordinate foliations $\{dt=0\}, \{du_2=0\},\dots,\{du_{k+1}=0\}$ of the chart $(t,u)$ is the restriction of a singular holomorphic foliation defined on all of $Bl_0\,\CC^{k+1}$, namely
$$\{dt=0\}=\pi^*\{dz_1=0\}, \qquad \{du_j=0\}=\pi^*\{z_1dz_j-z_jdz_1=0\}.$$
\end{enumerate}
\end{lemma}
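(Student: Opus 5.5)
For (1), I will first find a ball whose preimage sits inside $T_E$, and then push $F$ down by Hartogs/Levi extension. Since $E$ is compact, $T_E$ contains $\pi^{-1}(\overline{\mathbb{D}}^{k+1}_{\varepsilon})$ for some $\varepsilon>0$. The map $\pi$ restricts to a biholomorphism
$$\pi^{-1}(\mathbb{D}^{k+1}_{\varepsilon})\setminus E \to \mathbb{D}^{k+1}_{\varepsilon}\setminus\{0\}.$$
Hence $G_0:=F\circ\pi^{-1}$ is a meromorphic function on the punctured polydisc. Since $k+1=n\geq 2$, the point $0$ has codimension at least two, and the Levi extension theorem for meromorphic functions extends $G_0$ to some $G\in\MM(\mathbb{D}^{k+1}_{\varepsilon})$. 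The identity $F=G\circ\pi$ holds off $E$, hence everywhere by the identity theorem.

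For distributions, I will argue the same way. On the punctured polydisc, $\pi_*\DD$ is a codimension one distribution; locally it is given by holomorphic $1$-forms with codimension $\geq 2$ singular set. I will express it globally by a meromorphic $1$-form $\omega_0=\sum_i a_i\,dz_i$ with the $a_i$ meromorphic. A concrete route is to normalize by one nonzero coefficient, setting $a_i/a_{i_0}$ for the index $i_0$ with $a_{i_0}\not\equiv0$. These ratios are globally well-defined meromorphic functions on $\mathbb{D}^{k+1}_\varepsilon\setminus\{0\}$, since they are independent of the local unit factors. By Levi, each ratio extends meromorphically across $0$. I then clear denominators and divide by the gcd in the UFD $\OO_0$, which gives a saturated holomorphic form $\omega$ near $0$ defining a distribution $\DD'$ on $\mathbb{D}^{k+1}_\varepsilon$. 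The pullback $\pi^*\DD'$ agrees with $\DD$ off $E$. After saturation, the two distributions then coincide on $\pi^{-1}(\mathbb{D}^{k+1}_\varepsilon)$, because a distribution is determined by its restriction to a dense open set (this is $(\star)$ applied to the connected set $\pi^{-1}(\mathbb{D}^{k+1}_\varepsilon)$). Integrability also passes down: if $\DD$ is a foliation, $\omega\wedge d\omega$ vanishes off $0$, hence identically. The point I expect to need the most care is the meaning of ``$G\circ\pi$'' for distributions, namely the saturated pullback $\pi^*\DD'$. I will state it as: $\DD$ equals the saturation of $\pi^*\omega$. This is exactly what $(\star)$ gives.

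Item (2) is a direct computation in the chart. Since $\pi(t,u)=(t,tu_2,\dots,tu_{k+1})$, we have $\pi^*dz_1=dt$. This gives $\{dt=0\}=\pi^*\{dz_1=0\}$, and $\{dz_1=0\}$ is a foliation on all of $\CC^{k+1}$. Next,
$$\pi^*(z_1dz_j-z_jdz_1)=t\,d(tu_j)-tu_j\,dt=t^2du_j.$$
After saturation (dividing by $t^2$) this defines $\{du_j=0\}$ in the chart. The form $z_1dz_j-z_jdz_1$ is integrable, since it is $z_1^2\,d(z_j/z_1)$. Therefore its saturated pullback is a global singular foliation on $Bl_0\,\CC^{k+1}$ whose restriction to the chart is $\{du_j=0\}$.

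The only genuinely delicate point is the extension across the origin in (1). I will handle it with Levi's theorem applied to the coefficient ratios, relying on $n\geq2$.
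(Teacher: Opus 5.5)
Your proof is correct and follows essentially the paper's argument: Levi extension across the origin for meromorphic functions, and the same chart computation $\pi^*(z_1dz_j-z_jdz_1)=t^2du_j$ for item (2). The one variation is in the distribution case. The paper takes a single holomorphic $1$-form defining $\pi_*\DD$ on $\mathbb{D}^{k+1}_\varepsilon\setminus\{0\}$ and extends its coefficients by Hartogs, whereas you extend the globally defined ratios $a_i/a_{i_0}$ by Levi and then saturate locally. Your route has the small advantage of not needing the fact, true but left implicit in the paper, that $\pi_*\DD$ has a global holomorphic defining form on the punctured polydisc, i.e.\ that its conormal line bundle is trivial there.
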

\begin{proof}
(1) Since $E$ is compact, $\pi^{-1}(\overline{\mathbb{D}}_{\varepsilon}^{k+1})\subset T_E$ for some $\varepsilon>0$. As $\pi$ is a biholomorphism off $E$, the function $F$ induces a meromorphic function on $\mathbb{D}^{k+1}_{\varepsilon}\setminus\{0\}$, which extends to $\mathbb{D}^{k+1}_{\varepsilon}$ by Levi's extension theorem, since $k+1\geq 2$. For a distribution $\DD$ on $T_E$, take a holomorphic $1$-form $\omega=\sum a_i\,dz_i$ defining its direct image on $\mathbb{D}^{k+1}_{\varepsilon}\setminus \{0\}$: the coefficients $a_i$ extend holomorphically through the origin by Hartogs' theorem, and the extended form defines a distribution $\DD_0$ with $\pi^*\DD_0=\DD$ near $E$; integrability is preserved.

(2) In the chart, $\pi^*dz_1=dt$; and since $z_1=t$, $z_j=tu_j$,
$$\pi^*(z_1dz_j-z_jdz_1)= t(u_jdt+tdu_j)-tu_j\,dt=t^2du_j,$$
which, after dividing by $t^2$, generates $\{du_j=0\}$. The forms $z_1dz_j-z_jdz_1$ are integrable with singular set of codimension two (they define the pencils of hyperplanes $\{z_j=cz_1\}$), so their pullbacks define singular foliations on all of $Bl_0\,\CC^{k+1}$.
\end{proof}

Next we construct, on $\Pp^{k+1}$, foliations adapted to the curve $C$ and to the point $p$, to which Proposition \ref{ell=>1} will be applied.
\begin{lemma}\label{lem:adapted}
Let $C\subset \Pp^{k+1}$ be a connected projective curve, $p\in C$ a smooth point and $\ell \in\{1,\dots,k+1\}$. There exist codimension one singular holomorphic foliations $\FF_1,\dots,\FF_{\ell}$ on $\Pp^{k+1}$, all regular at $p$, satisfying hypotheses (a), (b), (c) of Proposition \ref{ell=>1} (with respect to the germ of $C$ at $p$). Moreover, $\FF_1$ can be chosen so that either:
\begin{enumerate}[(i)]
\item $\FF_1$ is the pencil of hyperplanes containing a fixed codimension two linear subspace; in this case $\FF_1$ admits the rational first integral $R_0=H/H'$, a quotient of two linear forms, and every rational first integral of $\FF_1$ is of the form $\phi\circ R_0$ with $\phi\in \CC(w)$; or
\item $\FF_1$ admits no non-constant rational first integral.
\end{enumerate}
\end{lemma}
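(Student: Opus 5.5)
The plan is to build all the foliations explicitly on $\Pp^{k+1}$, using rational functions and linear pencils. We fix homogeneous coordinates and an affine chart around the smooth point $p$.

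\emph{Step 1: the foliations $\FF_2,\dots,\FF_\ell$.} Let $I(C)$ be the homogeneous ideal of $C$. Since $C$ is smooth at $p$, the differentials at $p$ of (dehomogenized) elements of $I(C)$ span the conormal space $(T_pC)^{\perp}$, which has dimension $k$.
\begin{itemize}
\item Choose homogeneous $G_2,\dots,G_\ell\in I(C)$ with $dG_2(p),\dots,dG_\ell(p)$ linearly independent. This is possible because $\ell-1\le k$.
\item Make them all of the same degree $m$ by multiplying by powers of a linear form $L$ with $L(p)\ne 0$. This does not destroy independence at $p$, because $G_i(p)=0$ gives $d(L^aG_i)(p)=L(p)^a\,dG_i(p)$.
\item Fix a form $H_0$ of degree $m$ with $H_0(p)\neq0$, and let $\FF_i$ be the foliation $\{d(G_i/H_0)=0\}$, after saturating.
\end{itemize}
Each $\FF_i$ is regular at $p$, since $d(G_i/H_0)(p)=dG_i(p)/H_0(p)\neq 0$. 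Its leaf through $p$ is contained in $\{G_i=0\}\supset C$, so hypothesis (c) holds.

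\emph{Step 2: the foliation $\FF_1$ and hypotheses (a), (b).} Whatever choice we make, we only need $\FF_1$ regular at $p$ with $T_p\FF_1\not\supset T_pC$. Then its conormal lies outside $(T_pC)^\perp$. The conormals of $\FF_2,\dots,\FF_\ell$ are independent and lie inside $(T_pC)^\perp$, so together with that of $\FF_1$ they are independent. This gives (a) and (b) at once.

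\emph{Case (i).} Pick a hyperplane $\{H=0\}$ through $p$ not containing the tangent line of $C$ at $p$. Then pick a codimension two linear subspace $\Lambda\subset\{H=0\}$ with $p\notin\Lambda$, and write $\Lambda=\{H=H'=0\}$. The pencil $\FF_1=\{d(H/H')=0\}$ is regular off $\Lambda$, hence at $p$, and its leaf through $p$ is $\{H=0\}$, transverse to $C$.

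For the statement on first integrals, observe that $R_0=H/H'$ defines a morphism $\Pp^{k+1}\setminus\Lambda\to\Pp^1$ whose fibres are hyperplanes minus $\Lambda$, hence irreducible. A rational first integral $R$ of $\FF_1$ is constant on generic fibres, so it is a rational function on the base. In other words, $\CC(R_0)$ is algebraically closed in $\CC(\Pp^{k+1})$ by irreducibility of the generic fibre, and therefore $R=\phi\circ R_0$ with $\phi\in\CC(w)$.

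\emph{Case (ii).} In affine coordinates $z$, take the foliation defined by the closed logarithmic form
$$\lambda_1\frac{dz_1}{z_1}+\lambda_2\frac{dz_2}{z_2},\qquad \lambda_1/\lambda_2\notin\QQ,$$
which is pulled back from $\Pp^2$ by a linear projection. Its leaves are $\{z_1^{\lambda_1}z_2^{\lambda_2}=c\}$. A rational first integral would restrict to a generic plane $\{z_3,\dots,z_{k+1}=\text{const}\}$ as a rational first integral of the linear foliation $\lambda_2z_2\,dz_1+\lambda_1z_1\,dz_2$. For irrational ratio this foliation has no invariant algebraic curves other than the axes and the line at infinity, since its generic leaves are transcendental. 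So no non-constant rational first integral exists.

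Choose any point $p_0$ in the torus $\{z_1z_2\neq0\}$, where the foliation is regular, with tangent hyperplane $\Pi_0$ there. Since $\mathrm{PGL}_{k+2}$ acts transitively on pairs (point, hyperplane through it), we move $(p_0,\Pi_0)$ by a projective automorphism to $(p,\Pi)$, where $\Pi$ is a hyperplane through $p$ transverse to $T_pC$. Let $\FF_1$ be the image foliation; it has the required properties.

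The step I expect to need the most care is the first-integral claims, namely the factorization through $R_0$ in case (i) and the absence of rational first integrals in case (ii). Both are classical, and I would cite or briefly sketch them as indicated above.
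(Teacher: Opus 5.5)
Your proof is correct. It matches the paper on the reduction of (a) and (b) to the single condition $T_p\FF_1\not\supset T_pC$, and on case (i). It takes a different route for $\FF_2,\dots,\FF_\ell$ and for case (ii).

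\textbf{The foliations $\FF_2,\dots,\FF_\ell$.} The paper identifies $\II_C\otimes\kappa(p)$ with $(T_pC)^{\perp}$. It then uses Serre vanishing of $H^1(\Pp^{k+1},\mathfrak m_p\II_C(m))$ to lift arbitrary conormal vectors at $p$ to global sections of $\II_C(m)$, for one large degree $m$. You get the same conclusion directly from the homogeneous ideal $I(C)$ and the Jacobian criterion: the Zariski tangent space of $C$ at the smooth point $p$ is the line $T_pC$, so differentials of elements of $I(C)$ span $(T_pC)^{\perp}$. You then equalize degrees by hand with powers of $L$. This is more elementary, and nothing is lost, since the uniform degree provided by Serre vanishing is never needed.

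\textbf{Case (ii).} The paper pulls back Jouanolou's foliation by a generic linear projection. This relies on the nontrivial theorem that Jouanolou's foliation has no invariant algebraic curves, plus a somewhat informal genericity argument for regularity at $p$ and for (b). You instead use the logarithmic foliation with $\lambda_1/\lambda_2\notin\QQ$. Its lack of rational first integrals is classical: an irreducible invariant polynomial is an eigenvector of the linear vector field $\lambda_2z_1\partial_{z_1}-\lambda_1z_2\partial_{z_2}$, whose eigenvalues $i\lambda_2-j\lambda_1$ on monomials are pairwise distinct, so only the axes are invariant in $\CC^2$. You then position it at $p$ using the transitivity of $\mathrm{PGL}_{k+2}$ on incident point--hyperplane pairs, which is cleaner than the genericity argument. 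The price is that your $\FF_1$ is defined by a closed rational $1$-form, so it is transversely affine and tamer than the paper's. However, neither the lemma nor its use in Theorem~\ref{thm:B} requires more than the absence of non-constant rational first integrals.

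\textbf{Two small points to tidy.}
First, clearing denominators in $\lambda_1dz_1/z_1+\lambda_2dz_2/z_2$ gives $\lambda_1z_2\,dz_1+\lambda_2z_1\,dz_2$. Your indices are swapped, which is harmless since the irrationality condition is symmetric.
Second, before restricting a first integral $R$ to a plane $\{z_3=\dots=z_{k+1}=\mathrm{const}\}$, note that the fields $\partial_{z_j}$, $j\geq 3$, are tangent to $\FF_1$. Hence $R$ depends only on $(z_1,z_2)$, and its restriction is non-constant whenever $R$ is. This is the same factorization through the projection that the paper uses.
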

\begin{proof}
We first construct $\FF_2,\dots,\FF_{\ell}$; note that $\ell-1\leq k$. Let $\II_C$ be the ideal sheaf of $C$, let $\mathfrak{m}_p\subset \OO_{\Pp^{k+1}}$ be the ideal sheaf of the point $p$ and let $\kappa(p)=\OO_{\Pp^{k+1},p}/\mathfrak{m}_p\simeq \CC$ be the residue field at $p$.

We shall record sections of $\II_C(m)$ not by their values at $p$ -- which vanish, since such sections vanish along $C\ni p$ -- but by their \emph{differentials} at $p$. This is expressed by the fiber
$$\II_C\otimes \kappa(p)\;=\;\II_{C,p}/\mathfrak{m}_p\II_{C,p},$$
a finite dimensional $\CC$-vector space. Indeed, as $\II_{C,p}\subset \mathfrak{m}_p$, the Leibniz rule gives $d(fs)(p)=f(p)\,ds(p)+s(p)\,df(p)=0$ for $f\in\mathfrak{m}_p$ and $s\in \II_{C,p}$, so that $s\mapsto ds(p)$ vanishes on $\mathfrak{m}_p\II_{C,p}$ and induces a linear map
$$\II_C\otimes \kappa(p)\longrightarrow T^*_p\Pp^{k+1},\qquad s\longmapsto ds(p),$$
whose image is the conormal space $(T_pC)^{\perp}$. Since $p$ is a smooth point of $C$, the ideal $\II_{C,p}$ is generated by a regular sequence of $k$ elements, so both spaces have dimension $k$ and the map above is an isomorphism onto $(T_pC)^{\perp}$. (Equivalently, $\II_C\otimes\kappa(p)$ is the fiber at $p$ of the conormal sheaf $\II_C/\II_C^2$, because $\II_{C,p}^2\subset \mathfrak{m}_p\II_{C,p}$.)

Twisting by $\OO(m)$, the quotient $\II_C(m)/\mathfrak{m}_p\II_C(m)$ is a skyscraper sheaf supported at $p$, with stalk $\II_C(m)\otimes\kappa(p)$, and we have the exact sequence
$$0\longrightarrow \mathfrak{m}_p\II_C(m)\longrightarrow \II_C(m)\longrightarrow \II_C(m)\otimes\kappa(p)\longrightarrow 0.$$
By Serre's vanishing theorem \cite[III.5.2]{Har}, $H^1(\Pp^{k+1}, \mathfrak{m}_p\II_C(m))=0$ for $m\gg 0$, whence the induced map on global sections
$$H^0(\Pp^{k+1},\II_C(m))\longrightarrow \II_C(m)\otimes \kappa(p)$$
is surjective. Choose now $g\in H^0(\OO(m))$ with $g(p)\neq 0$. The space on the right hand side of the last map, namely the fiber $\II_C(m)\otimes\kappa(p)$, is canonically $\big(\II_C\otimes\kappa(p)\big)\otimes \OO(m)|_p$, and $g(p)$ is a basis of the line $\OO(m)|_p$; thus $g$ trivializes $\OO(m)$ near $p$ and identifies
$$\II_C(m)\otimes\kappa(p)\;\simeq\;\II_C\otimes\kappa(p)\;\simeq\;(T_pC)^{\perp}.$$
Concretely, this identification sends a section $s$ to $d(s/g)(p)=ds(p)/g(p)$, the quotient $s/g$ being a rational function which is regular near $p$ and vanishes along $C$ there. By surjectivity we may therefore choose sections $s_2,\dots,s_{\ell}\in H^0(\II_C(m))$ whose differentials $ds_i(p)$ are linearly independent in $(T_pC)^{\perp}$. For $i=2,\dots,\ell$, let $\FF_i$ be the foliation defined by the rational first integral $s_i/g$: it is regular at $p$, since $d(s_i/g)(p)=ds_i(p)/g(p)\neq 0$, its local leaf through $p$ is the germ of the hypersurface $H_i=\{s_i=0\}$, which contains $C$, so (c) holds, and the conormals $ds_i(p)$ are linearly independent in $(T_pC)^{\perp}$.

For $\FF_1$, in case (i) choose linear forms $H, H'$ with $H(p)=0$, $H'(p)\neq 0$ and $T_p\{H=0\}\not\supset T_pC$, and let $\FF_1$ be the foliation defined by $R_0=H/H'$; it is regular at $p$ and transverse to $C$ at $p$, giving (b), and its conormal $dR_0(p)$ does not lie in $(T_pC)^{\perp}$, so that, together with the previous paragraph, (a) holds. Given any non-constant rational first integral $R$ of $\FF_1$: $R$ is constant on the generic member of the pencil, which is an irreducible hyperplane; hence $R$ induces a well defined rational map $\phi$ on the parameter line of the pencil, and $R=\phi\circ R_0$.

In case (ii), let $\mathcal{J}$ be a foliation on $\Pp^2$ of degree $\geq 2$ without invariant algebraic curves, e.g.\ Jouanolou's foliation \cite{Jou}; in particular $\mathcal{J}$ has no non-constant rational first integral, since the closure of a generic fiber of a first integral would be an invariant curve. Let $\Lambda:\Pp^{k+1}\dashrightarrow \Pp^2$ be a generic linear projection, and $\FF_1:=\Lambda^*\mathcal{J}$. For generic $\Lambda$, the point $p$ is not on the center of projection, $\Lambda$ is a submersion at $p$, $\mathcal{J}$ is regular at $\Lambda(p)$, and the conormal $(d\Lambda_p)^*\big(\eta(\Lambda(p))\big)$ avoids any finite set of ``bad'' directions; hence $\FF_1$ is regular at $p$ and conditions (a), (b) hold for generic choices. Finally, $\FF_1$ has no non-constant rational first integral: the fibers of $\Lambda$ are tangent to $\FF_1$ (they are contained in the kernel of the defining form), and the generic fiber is irreducible, so any rational first integral $R$ of $\FF_1$ is constant on the generic fiber of $\Lambda$ and therefore factors as $R=\overline{R}\circ \Lambda$ with $\overline R$ rational on $\Pp^2$; then $\overline R$ is a first integral of $\mathcal{J}$, hence constant.
\end{proof}

The last ingredient is the behavior of the normal bundle of a curve under blow-up at a smooth point.

\begin{lemma}\label{lem:normal}
Let $X$ be a complex manifold of dimension $n\geq 2$, $\widehat C\subset X$ a curve, smooth at a point $\hat p\in \widehat C$, and let $\rho: \widetilde X:=Bl_{\hat p}X\to X$ be the blow-up at $\hat p$, with exceptional divisor $E$ and strict transform $\widetilde C$ of $\widehat C$. Then $\rho|_{\widetilde C}:\widetilde C\to \widehat C$ is a biholomorphism and, if $\widehat C$ is smooth,
$$N_{\widetilde C|\widetilde X}\;\simeq\; (\rho|_{\widetilde C})^*N_{\widehat C|X}\otimes \OO_{\widetilde C}(-\tilde p), \qquad \tilde p:=\widetilde C\cap E.$$
In particular $\deg N_{\widetilde C|\widetilde X}=\deg N_{\widehat C|X}-(n-1)$.
\end{lemma}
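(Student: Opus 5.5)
The proof is local near $\hat p$, since $\rho$ is a biholomorphism off $E$. First choose local coordinates $(z_1,\dots,z_n)$ on a neighborhood $\Omega$ of $\hat p$ with $\hat p=0$ and $\widehat C\cap\Omega=\{z_2=\dots=z_n=0\}$; this is possible because $\widehat C$ is smooth at $\hat p$. Use the chart $(t,u_2,\dots,u_n)$ of the blow-up with $\rho(t,u)=(t,tu_2,\dots,tu_n)$ and $E=\{t=0\}$. In this chart the preimage of $\widehat C\setminus\{\hat p\}$ is $\{u_2=\dots=u_n=0,\ t\neq 0\}$, so its closure is the smooth curve $\widetilde C=\{u=0\}$. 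This curve meets $E$ transversally at the single point $\tilde p=(0,0)$, and $\rho|_{\widetilde C}$ is $t\mapsto (t,0,\dots,0)$, a biholomorphism near $\tilde p$. Away from $\tilde p$ the map $\rho$ restricts to the identity of $\widehat C\setminus\{\hat p\}$. Hence $\rho|_{\widetilde C}$ is a bijective holomorphic map between curves whose inverse is holomorphic everywhere, i.e. a biholomorphism. Note that this step does not need $\widehat C$ smooth away from $\hat p$.

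For the normal bundle formula, assume $\widehat C$ is smooth. The differential $d\rho$ maps $T\widetilde X|_{\widetilde C}$ to $\rho^*TX|_{\widetilde C}$ and carries $T\widetilde C$ isomorphically onto $\rho^*T\widehat C$. It therefore induces a morphism of rank $n-1$ bundles
$$\nu: N_{\widetilde C|\widetilde X}\to (\rho|_{\widetilde C})^*N_{\widehat C|X},$$
which is an isomorphism off $\tilde p$. In the chart, $N_{\widetilde C|\widetilde X}$ has the frame $\partial_{u_2},\dots,\partial_{u_n}$ and $N_{\widehat C|X}$ has the frame $\partial_{z_2},\dots,\partial_{z_n}$. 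Along $u=0$ we have $d\rho(\partial_{u_j})=t\,\partial_{z_j}$, so in these frames $\nu=t\cdot\mathrm{Id}$. This identifies $\nu$ with an isomorphism
$$N_{\widetilde C|\widetilde X}\simeq (\rho|_{\widetilde C})^*N_{\widehat C|X}\otimes\OO_{\widetilde C}(-\tilde p).$$
Concretely, the image of $\nu$ is exactly the subsheaf of sections vanishing at $\tilde p$, and $t$ is a local generator of the ideal of $\tilde p$ on $\widetilde C$.

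The only point needing care is making this last identification intrinsic rather than frame-dependent. The scalar form $t\cdot\mathrm{Id}$ of $\nu$ shows it is injective as a sheaf map with image $\mathfrak m_{\tilde p}\cdot(\rho|_{\widetilde C})^*N_{\widehat C|X}$, and that subsheaf is the twist by $\OO_{\widetilde C}(-\tilde p)$. Finally, taking degrees, twisting a rank $n-1$ bundle by a degree $-1$ line bundle lowers the degree by $n-1$, and $\deg (\rho|_{\widetilde C})^*N_{\widehat C|X}=\deg N_{\widehat C|X}$ because $\rho|_{\widetilde C}$ is a biholomorphism. This gives the stated degree formula.
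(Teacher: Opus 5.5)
Your proof is correct and follows essentially the same route as the paper. Both use the same coordinates and blow-up chart, and both compute that the induced map on normal bundles is $t\cdot\mathrm{Id}$ in the frames $\partial_{u_j}$, $\partial_{z_j}$. This identifies $N_{\widetilde C|\widetilde X}$ with the twist of $(\rho|_{\widetilde C})^*N_{\widehat C|X}$ by $\OO_{\widetilde C}(-\tilde p)$, and the degree count in rank $n-1$ follows. You also verify explicitly that $\rho|_{\widetilde C}$ is a biholomorphism, a step the paper only calls standard.
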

\begin{proof}
The first statement is standard, since $\widehat C$ is smooth at $\hat p$. The differential of $\rho$ induces a morphism of vector bundles $d\rho: N_{\widetilde C|\widetilde X}\to (\rho|_{\widetilde C})^*N_{\widehat C|X}$, which is an isomorphism outside $\tilde p$. In suitable local coordinates $(z_1,\dots,z_n)$ at $\hat p$ we have $\widehat C=\{z_2=\dots=z_n=0\}$, and in the chart $(t,u)$ of the blow-up, $\widetilde C=\{u=0\}$ with $\rho(t,u)=(t,tu_2,\dots,tu_n)$. The classes of $\partial_{u_2},\dots,\partial_{u_n}$ trivialize $N_{\widetilde C}$ near $\tilde p$, the classes of $\partial_{z_2},\dots,\partial_{z_n}$ trivialize $N_{\widehat C}$ near $\hat p$, and in these trivializations $d\rho$ is the matrix $t\cdot \mathrm{Id}$. Hence $d\rho$ vanishes exactly to order one at $\tilde p$, in every normal direction, and the induced map $N_{\widetilde C}\otimes\OO(\tilde p)\to (\rho|_{\widetilde C})^*N_{\widehat C}$ is an isomorphism of vector bundles. The degree formula follows by taking determinants, since $\rank N=n-1$.
\end{proof}

\begin{proof}[Proof of Theorem \ref{thm:A}]
Write $n=k+1$ and let $C\subset \Pp^{k+1}$, $\ell\in\{0,\dots,k+1\}$ be given; fix a smooth point $p\in C$.

\medskip\noindent
{\it Step 1: the chart at $p$.}
If $\ell\geq 1$, let $\FF_1,\dots,\FF_{\ell}$ be the foliations given by Lemma \ref{lem:adapted}, and apply Proposition \ref{ell=>1}, with $\Delta_2$ a coordinate neighborhood of $p$, to obtain a biholomorphism $\Psi:\mathbb{D}^{k+1}\to \Delta\ni p$ with the properties (i), (ii), (iii) stated there. If $\ell=0$, apply instead Proposition \ref{ell=0}, obtaining $\Psi$ with property (i) and with none of the coordinate foliations extending. In both cases, set
$$\tilde V:=\Delta, \qquad \tilde\psi:=\Psi^{-1}:\tilde V\to V:=\mathbb{D}^{k+1}.$$
Then $\tilde\psi(p)=0$ and $\tilde\psi(C\cap \tilde V)=\{y=0\}$, and $V$ is a polydisc containing $\overline{U}$, as required in Section \ref{sec:gluing} (recall $\overline U\subset \overline{\mathbb{D}}^{k+1}_r$ with $r<1$).

\medskip\noindent
{\it Step 2: the gluing.}
Perform the construction of Section \ref{sec:gluing} with these choices of $\tilde\psi,\tilde\varphi, T_C, T_E$, obtaining the Hausdorff complex manifold $S_0=T_C\sqcup_{\tilde h}T_E$, which contains the curve $C$ and the divisor $E$ meeting transversely at the point $p=q$.

\medskip\noindent
{\it Step 3: $\Distr(S_0)=\Fol(S_0)$ has exactly $\ell$ elements.}
First, each $\FF_i$ ($1\leq i\leq \ell$) induces a foliation $\widehat\FF_i$ on $S_0$. Indeed, consider $\FF_i|_{T_C}$ on the piece $T_C$; on the overlap $\tilde h(\tilde W)$, in the chart $\tilde\psi$, the foliation $\FF_i$ is the coordinate foliation $\{dz_i=0\}$ by Proposition \ref{ell=>1}(ii); since $h$ preserves the coordinate foliations, the pullback $\tilde h^*(\FF_i|_{\tilde h(\tilde W)})$ is the coordinate foliation $\{dt=0\}$ (if $i=1$) or $\{du_{i}=0\}$ (if $i\geq2$) of the chart $(t,u)$ restricted to $\tilde W$. By Lemma \ref{lem:Eside}(2), the latter is the restriction to $\tilde W$ of a foliation defined on all of $Bl_0\,\CC^{k+1}$, in particular on $T_E$. The two pieces agree on the overlap, and hence define a foliation $\widehat\FF_i\in\Fol(S_0)$. The foliations $\widehat\FF_1,\dots,\widehat\FF_{\ell}$ are pairwise distinct, since their conormal directions at $p$ are linearly independent by the general position hypothesis (a) of Proposition \ref{ell=>1}.

Conversely, let $\DD\in \Distr(S_0)$. By Theorem \ref{thm:coordinate}, $\DD|_{T_C}$ extends to a distribution $\widehat\DD$ on $\Pp^{k+1}$ and, in the chart $\tilde\psi=\Psi^{-1}$, the distribution $\tilde\psi_*(\widehat\DD|_{\tilde V})=\Psi^*\widehat\DD$ is one of the coordinate distributions $\{dz_j=0\}$, $j=1,\dots,k+1$; moreover it is integrable, so $\widehat\DD$ is a foliation extending $\Psi_*\{dz_j=0\}$ to $\Pp^{k+1}\supset\Delta_2$. If $\ell=0$, this contradicts Proposition \ref{ell=0}(ii); hence $\Distr(S_0)=\varnothing$. If $\ell\geq 1$ and $j>\ell$, this contradicts Proposition \ref{ell=>1}(iii), since $\Delta_2$ (and a fortiori $\Pp^{k+1}$) contains a neighborhood of $\overline\Delta$. Hence $j\leq \ell$, and $\widehat\DD$ and $\FF_j$ are two global objects on $\Pp^{k+1}$ that agree on the open set $\Delta$; by $(\star)$ they agree everywhere, so $\DD|_{T_C}=\FF_j|_{T_C}$. Finally, $\DD$ and $\widehat\FF_j$ are two distributions on the connected manifold $S_0$ that agree on the open subset $T_C$; again by $(\star)$, $\DD=\widehat\FF_j$. Therefore
$$\Distr(S_0)=\Fol(S_0)=\{\widehat\FF_1,\dots,\widehat\FF_{\ell}\}.$$

\medskip\noindent
{\it Step 4: contraction of $E$.}
Inside $S_0$, the divisor $E$ has a neighborhood biholomorphic to a neighborhood of the exceptional divisor in $Bl_0\,\CC^{k+1}$, hence normal bundle $\OO_{\Pp^k}(-1)$; it can therefore be contracted to a smooth point (see \cite{FN}; for $k=1$ this is Castelnuovo--Grauert \cite{Gra}). Concretely, define
$$S:=\big(S_0\setminus E\big)\ \sqcup_{\pi}\ \mathbb{D}^{k+1}_{\varepsilon},$$
gluing the open subset $\pi^{-1}(\mathbb{D}^{k+1}_{\varepsilon})\setminus E$ of $S_0\setminus E$ with $\mathbb{D}^{k+1}_{\varepsilon}\setminus\{0\}$ via $\pi$. An argument analogous to the Hausdorff verification of Section \ref{sec:gluing} shows that $S$ is a Hausdorff complex manifold; there is a natural holomorphic map $\rho: S_0\to S$ contracting $E$ to the point $0$ and biholomorphic elsewhere, which realizes $S_0$ as the blow-up of $S$ at $0$. The curve $C\subset S_0$ maps to a curve $\widehat C:=\rho(C)\subset S$; since $C$ meets $E$ transversely at the single (smooth) point $p$, the map $\rho|_C: C\to \widehat C$ is a biholomorphism and $\widehat C$ is a compact curve of $S$ passing through $0$.

Let us check that $\widehat C$ is an \emph{embedded copy} of $C$ in the sense fixed in the introduction, which is where the possible singularities of $C$ are taken into account. Let $x\in C$. If $x\neq p$, then $x$ has a neighborhood $\Omega\subset T_C$ in $\Pp^{k+1}$ with $p\notin \Omega$ and $\Omega\cap E=\varnothing$; the composition $\Omega\hookrightarrow S_0\xrightarrow{\ \rho\ } S$ is then a biholomorphism onto an open subset of $S$ carrying $C\cap\Omega$ onto $\widehat C\cap\rho(\Omega)$, as required -- note that $\rho$ is biholomorphic away from $E$ and that $\Omega$ is an open subset of $\Pp^{k+1}$, so the singularities of $C$ other than $p$ are reproduced verbatim in $S$. If $x=p$, recall that $p$ is a smooth point of $C$; near $\rho(p)=0$ the manifold $S$ is the polydisc $\mathbb{D}^{k+1}_{\varepsilon}$ and $\widehat C$ is a smooth curve germ through the origin, so the required ambient biholomorphism exists because any two smooth curve germs in $\CC^{k+1}$ are equivalent.

Pull-back by $\rho$ gives an injection $\Distr(S)\to \Distr(S_0)$ (and similarly for foliations), and this map is a bijection: given $\DD_0\in\Distr(S_0)$, its direct image on $S\setminus\{0\}$ extends through the point $0$ (the coefficients of a defining $1$-form extend by Hartogs, as in Lemma \ref{lem:Eside}(1)), producing $\DD\in \Distr(S)$ with $\rho^*\DD=\DD_0$. Therefore
$$\#\Distr(S)=\#\Fol(S)=\ell,$$
which proves part (1) of the theorem.

\medskip\noindent
{\it Step 5: the germ statement.}
Let $S'\subset S$ be a connected open neighborhood of $\widehat C$. The restriction $\Fol(S)\to\Fol(S')$ is injective by $(\star)$. For surjectivity, let $\GG'\in \Fol(S')$ (or $\Distr(S')$) and consider $\rho^*\GG'$ on $\widetilde S':=\rho^{-1}(S')$, a connected neighborhood of $C\cup E$ in $S_0$. The restriction of $\rho^*\GG'$ to a connected neighborhood of $C$ contained in $\widetilde S'\cap T_C$ extends to $\Pp^{k+1}$ by \cite[Theorem 3.1]{Ros}, and the proof of Theorem \ref{thm:coordinate} applies verbatim -- the contradiction there only uses the values of the distribution near $\tilde\varphi^{-1}(\{0\}\times\overline{\mathbb{D}}^k)\subset E$, which is contained in $\widetilde S'$. As in Step 3, we conclude that $\rho^*\GG'$ agrees with some $\widehat\FF_j$ ($j\leq \ell$) on an open set, hence on all of $\widetilde S'$, and $\GG'$ is the restriction of the corresponding foliation of $S$. This proves part (2).

\medskip\noindent
{\it Step 6: the normal bundle.}
Assume $C$ smooth. Since $T_C$ is an open neighborhood of $C$ in $\Pp^{k+1}$, we have $N_{C|S_0}\simeq N_{C|\Pp^{k+1}}$. By Step 4, $S_0$ is the blow-up of $S$ at $0\in \widehat C$ and $C$ is the strict transform of $\widehat C$; Lemma \ref{lem:normal} then gives
$$N_{C|\Pp^{k+1}}\simeq N_{C|S_0}\simeq (\rho|_C)^*N_{\widehat C|S}\otimes \OO_C(-p),$$
that is, $N_{\widehat C|S}\simeq N_{C|\Pp^{k+1}}\otimes \OO_C(p)$ under the identification $\rho|_C$. Taking degrees of determinants, and recalling that $N_{C|\Pp^{k+1}}$ has rank $k$, we get $\deg N_{\widehat C|S}=\deg N_{C|\Pp^{k+1}}+k$, which is part (3) with $n=k+1$.

Finally, let us make this number explicit in terms of the classical invariants of $C$. Writing $d$ for the degree and $g$ for the genus of the smooth curve $C\subset\Pp^n$, the normal bundle sequence
$$0\longrightarrow T_C\longrightarrow T_{\Pp^{n}}|_C\longrightarrow N_{C|\Pp^{n}}\longrightarrow 0$$
gives $\deg N_{C|\Pp^n}=\deg\big(T_{\Pp^n}|_C\big)-\deg T_C$. Since $c_1(T_{\Pp^n})=(n+1)H$ with $H$ the hyperplane class, we have $\deg(T_{\Pp^n}|_C)=(n+1)(H\cdot C)=(n+1)d$, while $\deg T_C=2-2g$. Hence
$$\deg N_{C|\Pp^n}=(n+1)d+2g-2, \qquad \deg N_{\widehat C|S}=(n+1)d+2g+n-3.$$
For $n=2$ a smooth plane curve has $g=\frac{(d-1)(d-2)}{2}$, so $\deg N_{C|\Pp^2}=C\cdot C=d^2$ and $\widehat C\cdot\widehat C=d^2+1$, in accordance with \cite{FLS}. For a line in $\Pp^n$ ($d=1$, $g=0$) one recovers $\deg N_{C|\Pp^n}=n-1$, as it must be since $N_{C|\Pp^n}\simeq\OO_{\Pp^1}(1)^{\oplus(n-1)}$.
\end{proof}

\begin{remark}
For $n=2$ the theorem produces, for any projective curve $C\subset\Pp^2$ and any $\ell\in\{0,1,2\}$, a germ of surface neighborhood of (a copy of) $C$ with exactly $\ell$ foliations, with normal bundle $N_{C|\Pp^2}\otimes\OO_C(p)$; compare with \cite{FL}, where germs $(S,C)$ with $C$ rational of arbitrary positive self-intersection and without (even formal) foliations are constructed, and with \cite{FLS}. The gluing performed here is a higher dimensional, ``wilder'' version of the constructions in \cite{FLS,Lv1}.
\end{remark}

\section{The field of meromorphic functions}\label{sec:functions}

We start with the elementary observation relating meromorphic functions and foliations, already alluded to in the introduction.

\begin{prop}\label{prop:pencil}
Let $S$ be a connected complex manifold. If there exist $f,g\in \MM(S)$ with $df\wedge dg\not\equiv 0$, then $\Fol(S)$ is infinite. Consequently, if $\Fol(S)$ is finite, then any two meromorphic functions on $S$ are functionally dependent.
\end{prop}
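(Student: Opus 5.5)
The plan is to consider the pencil of meromorphic functions $f+tg$, $t\in\CC$, and to show that the foliations $\FF_t$ they define are pairwise distinct. For each $t$, the meromorphic $1$-form $d(f+tg)=df+t\,dg$ is not identically zero. Indeed, if $df+t\,dg\equiv 0$ then $df\wedge dg=-t\,dg\wedge dg=0$, which contradicts the hypothesis. Locally we can write $df+t\,dg=\eta/\phi$ with $\eta$ holomorphic. Dividing $\eta$ by the gcd of its coefficients, we obtain a holomorphic $1$-form with singular set of codimension at least two. Since $\eta\wedge d\eta=0$ (because $df+t\,dg$ is closed), this form is integrable. These local saturations agree up to units on overlaps, so they glue to a foliation $\FF_t\in\Fol(S)$. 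Equivalently, $\FF_t$ is the foliation $\{d(f+tg)=0\}$ mentioned in the introduction.

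The key step is distinctness. We fix a point $x\in S$ where $f,g$ are holomorphic and $df(x)\wedge dg(x)\neq 0$; such points form a nonempty open set, since $df\wedge dg\not\equiv0$ and the indeterminacy and polar sets are proper analytic. At $x$ the covectors $df(x)$ and $dg(x)$ are linearly independent. Hence the conormal directions $[df(x)+t\,dg(x)]\in\Pp T^*_xS$ are pairwise distinct for distinct $t$, and none of them vanishes. Near $x$ the form $df+t\,dg$ is holomorphic and nonvanishing, so it is itself a saturated generator of $\FF_t$. It follows that $\FF_t\neq\FF_{t'}$ for $t\neq t'$, since the two foliations have different tangent hyperplanes at $x$. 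This yields an injection $\CC\hookrightarrow\Fol(S)$, so $\Fol(S)$ is infinite.

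The consequence is the contrapositive. If $\Fol(S)$ is finite, then $df\wedge dg\equiv0$ for all $f,g\in\MM(S)$, which is exactly functional dependence.

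The only mildly delicate point is the global definition of $\FF_t$ from a meromorphic closed form, namely that the saturation is well defined and glues. This is handled by the standard argument: on overlaps, the local saturated generators differ by a meromorphic function with no zeros or poles in codimension one, hence by a unit.
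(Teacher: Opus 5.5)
Your proposal is correct and follows essentially the same route as the paper: the pencil $\{d(f+tg)=0\}_{t\in\CC}$, with distinctness coming from $(df+t\,dg)\wedge(df+t'\,dg)=(t'-t)\,df\wedge dg\not\equiv 0$, which you check pointwise at a generic point rather than as an identity of forms. You also verify that the saturation of the closed meromorphic form $df+t\,dg$ glues to a genuine foliation, a detail the paper leaves implicit.
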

\begin{proof}
For $t\in \CC$, let $\FF_t\in\Fol(S)$ be the foliation defined by $d(f+tg)=df+t\,dg$. If $\FF_t=\FF_{t'}$, the forms $df+t\,dg$ and $df+t'\,dg$ are proportional at every point, so
$$0\equiv (df+t\,dg)\wedge(df+t'\,dg)=(t'-t)\,df\wedge dg,$$
forcing $t=t'$. Hence $t\mapsto \FF_t$ is injective and $\Fol(S)$ is infinite.
\end{proof}

We now compute the field of meromorphic functions of the manifolds of Theorem \ref{thm:A}. We use freely the notation of Sections \ref{sec:gluing} and \ref{sec:proofA}; in particular $S$ denotes the manifold constructed there, $S_0=T_C\sqcup_{\tilde h}T_E$ its blow-up at $0$, and, when $\ell\geq 1$, $\FF_1$ denotes the foliation of Lemma \ref{lem:adapted}, transverse to the curve at $p$. Note that $\rho^*:\MM(S)\to \MM(S_0)$ is an isomorphism, by Levi extension through the point $0$ (Lemma \ref{lem:Eside}(1) applied downstairs); we may therefore work on $S_0$.

\begin{lemma}\label{lem:functions}
Restriction to $T_C$, followed by the extension theorem of Rossi, defines an injective field homomorphism $\MM(S)=\MM(S_0)\hookrightarrow \CC(\Pp^{k+1})$, $F\mapsto R$. Moreover, a rational function $R\in\CC(\Pp^{k+1})$ lies in the image if and only if $R\circ \Psi$ depends only on the variable $z_1$. Consequently:
\begin{enumerate}[(1)]
\item if $\ell=0$, then $\MM(S)=\CC$;
\item if $\ell\geq 1$, then $\MM(S)=\CC\cup\{\text{rational first integrals of } \FF_1\}$.
\end{enumerate}
\end{lemma}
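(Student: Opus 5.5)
Starting from $F\in\MM(S_0)$, I restrict to the connected neighborhood $T_C$ of $C$ and apply Rossi's extension theorem \cite[Theorem 3.1]{Ros} in its version for meromorphic functions: a meromorphic function on a neighborhood of a positive dimensional subvariety of $\Pp^{k+1}$ extends to a rational function $R$. Call this map $F\mapsto R$. It is a field homomorphism and injective because $F$ is determined by its restriction to the open set $T_C$ (identity theorem on the connected manifold $S_0$).

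For the image, I use the overlap chart. On $\tilde W$ the function $F$ is read both as $F|_{T_E}\circ\tilde\varphi^{-1}$ and as $R\circ\tilde\psi^{-1}\circ h=R\circ\Psi\circ h$. Suppose first that $F$ is in the image, and set $G:=R\circ\Psi$, meromorphic on the polydisc $V=\mathbb{D}^{k+1}$ containing $\overline U$. If $G$ is constant we are done. Otherwise, the distribution $\{dG=0\}$ (after saturation) is defined on $S_0$ through $F$, so Theorem \ref{thm:coordinate} shows $\{dG=0\}$ is one of the coordinate distributions $\{dz_j=0\}$ in the chart $\tilde\psi$, that is, $G$ depends only on $z_j$. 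By Step 3 of the proof of Theorem \ref{thm:A}, $j\le\ell$. To force $j=1$ I use the $E$ side. By Lemma \ref{lem:Eside}(1), $F|_{T_E}=H\circ\pi$ with $H$ meromorphic near $0\in\CC^{k+1}$. If $j\ge2$, then in the chart $(t,u)$ we get $F=G_j(g(u_j))$, a function of $u_j$ alone, which must equal $H(t,tu_2,\dots)$. Restricting to $t=0$ shows that $F$ restricted to $E$ is the nonconstant function $G_j\circ g(u_j)$. This is impossible, since $H\circ\pi$ restricted to $E\simeq\Pp^k$ would be a nonconstant meromorphic function of $u_j$ that extends over all of $\Pp^k$. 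Instead, in the chart $G_j\circ g$ extends meromorphically across $|u_j|=1$ from $\mathbb{D}$ (as $F$ is defined on a neighborhood of $E$), which contradicts Lemma \ref{Lema-principal}(2) applied to $\eta=dG_j$. This last step is the main obstacle, because one must ensure that $F|_E$ is meromorphic on a neighborhood of the closed disc $\{|u_j|\le1\}$ in the $u_j$-line. That holds because $E$ is all of $\Pp^k$ and the chart $(t,u)$ is defined near $\overline{\mathbb{D}}\times\overline{\mathbb{D}}^k$. Hence $j=1$ and $R\circ\Psi$ depends only on $z_1$.

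Conversely, if $R\circ\Psi=\phi(z_1)$, then $R\circ\Psi\circ h=\phi(f(t))$ on $\tilde W$. Since $\pi^*z_1=t$ globally (Lemma \ref{lem:Eside}(2)), the function $\phi\circ f\circ z_1\circ\pi$ is meromorphic on $\pi^{-1}(\mathbb{D}^{k+1}_\varepsilon)\cup\tilde W$, after shrinking $\varepsilon$ so that it lies in the domain of $f$. It glues with $R|_{T_C}$ to an element of $\MM(S_0)$. The consequences then follow. If $\ell=0$, Proposition \ref{ell=0}(ii) forbids $\Psi_*\{dz_1=0\}$ from extending, so nonconstant $R$ are excluded and $\MM(S)=\CC$. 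If $\ell\ge1$, Proposition \ref{ell=>1}(ii) gives $\Psi_*\{dz_1=0\}=\FF_1|_\Delta$. Then $R\circ\Psi$ depends only on $z_1$ if and only if $dR\wedge\omega_1=0$ on $\Delta$, hence on $\Pp^{k+1}$ by $(\star)$/identity, that is, if and only if $R$ is a rational first integral of $\FF_1$.
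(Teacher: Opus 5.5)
Your proof is correct, but for the key claim (that $R\circ\Psi$ depends only on $z_1$) you take a different route from the paper.

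The paper never invokes Theorem~\ref{thm:coordinate} here. It writes $F|_{T_E}=G\circ\pi$ with $G$ meromorphic on $\mathbb{D}^{k+1}_{\varepsilon}$ and derives the matching identity \eqref{eq:matching}. Then, for each vertical variable $y_j$ and each small $a\neq 0$, it compares the slice $\xi\mapsto G(a,ac_1,\dots,a\xi,\dots)$, which is meromorphic on $|\xi|<\varepsilon/|a|$, with $\varrho_j\circ g$. Lemma~\ref{Lema-principal}(2) then kills $\partial\varrho/\partial y_j$ for all $j$ at once.

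You instead first apply Theorem~\ref{thm:coordinate} to the foliation $\{dF=0\}$, which reduces you to $R\circ\Psi=G_j(z_j)$. Only after that do you use the wild boundary of $g$, once, on the divisor $E$. Since $F$ is then independent of $t$ on $\tilde W$, its restriction to $E\simeq\Pp^k$ is a meromorphic function that equals $G_j(g(u_j))$ on $E\cap\tilde W$ and is meromorphic on the whole affine chart $u\in\CC^k$. Lemma~\ref{Lema-principal}(2) with $\eta=dG_j$ then rules out $j\geq 2$.

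What your route buys:
\begin{itemize}
\item It makes explicit that the function field is governed by the classification of distributions.
\item The $t$-independence lets you work on the single slice $t=0$ instead of all slices $t=a$.
\end{itemize}
What it costs: it imports Lemma~\ref{lema-sobre-h}, Rossi's theorem for distributions, and Step~3 of the proof of Theorem~\ref{thm:A}. All of these are logically dispensable, since the same one-variable argument on a neighborhood of $E$, applied slice by slice in $t$, already handles every $y_j$ directly. That is exactly the paper's more self-contained proof.

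Two small wrinkles. First, the sentence saying that $F|_E$ ``would be a nonconstant meromorphic function of $u_j$ that extends over all of $\Pp^k$'' is not by itself a contradiction (think of $u_j$ itself). The contradiction is the Lemma~\ref{Lema-principal}(2) step you give right after. Second, in the converse no shrinking of $\varepsilon$ is needed, and it would not even be legitimate, since $T_E$ is fixed in the construction of $S_0$. By \eqref{eq:TEconditions}, $\pi(T_E)\subset\mathbb{D}^{k+1}$, so $\phi\circ f\circ z_1\circ\pi$ is already meromorphic on all of $T_E$.
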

\begin{proof}
Given $F\in \MM(S_0)$, its restriction to $T_C$ is a meromorphic function on a connected neighborhood of the compact curve $C\subset \Pp^{k+1}$, and therefore extends to a rational function $R$ on $\Pp^{k+1}$ \cite[Theorem 3.1]{Ros}. The map $F\mapsto R$ is injective, by the identity theorem: two meromorphic functions on the connected manifold $S_0$ which agree on the non empty open set $T_C$ agree everywhere. It is clearly a field homomorphism.

Let now $F\in\MM(S_0)$ with associated $R$, and set $\varrho:=R\circ \Psi$, a meromorphic function on $\mathbb{D}^{k+1}$ which, since $R$ is rational and $\overline\Delta\subset \Delta_2$, is the restriction of a meromorphic function on a neighborhood of $\overline{\mathbb{D}}_r^{k+1}$ in the following sense: $R$ is meromorphic on $\Delta_2\supset\overline\Delta$ and, on $\overline U\subset \overline{\mathbb{D}}_r^{k+1}\subset\mathbb{D}^{k+1}$, $\varrho$ is meromorphic on a neighborhood of $\overline U$. On the other side, $F|_{T_E}$ comes from the ball: by Lemma \ref{lem:Eside}(1) there are $\varepsilon\in(0,1)$ and $G\in \MM(\mathbb{D}^{k+1}_{\varepsilon})$ with $F=G\circ \pi$ on $\pi^{-1}(\mathbb{D}_{\varepsilon}^{k+1})$. Comparing the two expressions of $F$ on the overlap, via $\tilde h$, we obtain the identity
\begin{equation}\label{eq:matching}
G(x, xy_1,\dots,xy_{k})\;=\;\varrho\big(f(x), g(y_1),\dots,g(y_{k})\big)
\end{equation}
for all $(x,y)\in \mathbb{D}\times\mathbb{D}^k$ with $|x|<\varepsilon$. Here we have written the chart $\tilde\varphi$ on $\tilde W$ in its own coordinates $(x,y_1,\dots,y_k)$, so that $\pi$ reads $\pi(x,y)=(x,xy_1,\dots,xy_k)$; recall that $\varrho$ is a function of $(z_1,\dots,z_{k+1})=(x,y_1,\dots,y_k)$.

\noindent{\bf Claim:} $\varrho$ depends only on $z_1=x$.
Fix $j\in\{1,\dots,k\}$, a value $a\in\mathbb{D}^*_{\varepsilon}$ and constants $c_i\in \mathbb{D}$ for $i\neq j$, all of them generic. Freezing every variable except the $j$-th vertical one, consider the two one variable functions
\begin{align*}
w(\xi)&:=G\big(a,\,ac_1,\dots,ac_{j-1},\,a\xi,\,ac_{j+1},\dots,ac_{k}\big),\\
\varrho_j(\zeta)&:=\varrho\big(f(a),\,g(c_1),\dots,g(c_{j-1}),\,\zeta,\,g(c_{j+1}),\dots,g(c_k)\big).
\end{align*}
The function $\varrho_j$ is meromorphic on a neighborhood of $\overline D$, because the point $\big(f(a),g(c_1),\dots,g(c_k)\big)$ lies in $U$ and $\varrho$ is meromorphic on a neighborhood of $\overline{U}$; for a generic choice of the parameters, $\varrho_j\not\equiv\infty$. By \eqref{eq:matching} we have $w(\xi)=\varrho_j\big(g(\xi)\big)$ for $\xi\in \mathbb{D}$. On the other hand $G$ is meromorphic on $\mathbb{D}^{k+1}_{\varepsilon}$ and $|a|<\varepsilon$, so $w$ is meromorphic on the disc $\{|\xi|<\varepsilon/|a|\}$, hence on a neighborhood of $\overline{\mathbb{D}}$. Then $g^*(d\varrho_j)=dw$ extends meromorphically to a neighborhood of $\overline{\mathbb{D}}$, and Lemma \ref{Lema-principal}(2), applied to the meromorphic $1$-form $d\varrho_j$ on a neighborhood of $\overline D$, forces $d\varrho_j\equiv0$: the function $\varrho_j$ is constant. Letting the parameters $a$ and $c_i$ vary, we conclude that $\partial\varrho/\partial y_j$ vanishes on the open set $U$, hence identically, and the Claim is proved.

Thus $\varrho=\varrho(z_1)$ for every $F\in \MM(S_0)$. Conversely, suppose $R\in\CC(\Pp^{k+1})$ satisfies $R\circ\Psi=\varrho(z_1)$. Define $\gamma:=\varrho\circ f\in\MM(\mathbb{D})$ and
$$F_E:=\gamma\circ z_1\circ \pi \ \in \MM(T_E),$$
which makes sense because $\pi(T_E)\subset \mathbb{D}^{k+1}$ by \eqref{eq:TEconditions}. On the overlap $\tilde W$, in the coordinates $(x,y)$ of the chart $\tilde\varphi$, we have $F_E=\gamma(x)=\varrho(f(x))$, which is precisely $R\circ\tilde h$ read in that chart: indeed $R\circ\tilde h=\varrho\circ h$ and $\varrho\big(h(x,y)\big)=\varrho\big(f(x),g(y_1),\dots,g(y_k)\big)=\varrho(f(x))$, since $\varrho$ does not depend on the variables $y_1,\dots,y_k$. Hence $R|_{T_C}$ and $F_E$ glue to a global $F\in \MM(S_0)$. Clearly $F$ is mapped to $R$ by the injection $\MM(S_0)\hookrightarrow \CC(\Pp^{k+1})$.

Finally we prove the assertions (1) and (2). If $\ell=0$, the foliation $\{dR=0\}$ would then be a foliation on $\Delta_2$ extending $\Psi_*\{dz_1=0\}$, contradicting Proposition \ref{ell=0}(ii); hence only constants occur and (1) follows. If $\ell\geq1$, the level sets $\Psi(\{z_1=c\})$ are the leaves of $\FF_1|_{\Delta}$ by Proposition \ref{ell=>1}(ii); thus $dR\wedge \omega_1\equiv 0$ on $\Delta$, where $\omega_1$ defines $\FF_1$, and since $\Pp^{k+1}$ is connected the identity theorem gives $dR\wedge\omega_1\equiv 0$ everywhere, i.e.\ $R$ is a rational first integral of $\FF_1$. Conversely, if $R$ is a rational first integral of $\FF_1$, then $R\circ \Psi$ is a meromorphic function on $\mathbb{D}^{k+1}$ constant on the fibers of $z_1$, hence of the form $\varrho(z_1)$. This proves (2).
\end{proof}

\begin{proof}[Proof of Theorem \ref{thm:B}]
Part (1) is Lemma \ref{lem:functions}(1). For part (2), suppose $\ell\geq 1$. If, in Lemma \ref{lem:adapted}, we choose $\FF_1$ as in case (ii) -- without rational first integrals -- then Lemma \ref{lem:functions}(2) gives $\MM(S)=\CC$. If instead we choose $\FF_1$ as in case (i) -- the pencil of hyperplanes with first integral $R_0$ -- then, by the same lemma and by the description of the first integrals of a linear pencil,
$$\MM(S)=\CC\cup\{\phi\circ R_0:\ \phi\in\CC(w) \text{ non-constant}\}=\CC(R_0|_S),$$
a purely transcendental extension of transcendence degree one, generated by the meromorphic function $F\in\MM(S)$ corresponding to $R_0$, which is a first integral of $\FF_1$ on $S$. Moreover, for $i\geq2$, a non-constant meromorphic first integral of $\FF_i$ on $S$ would be, by Lemma \ref{lem:functions}, a rational first integral of $\FF_1$; but then $\FF_i=\FF_1$, since both foliations would coincide with the foliation defined by its differential, contradicting general position. (Note that on $\Pp^{k+1}$ each $\FF_i$, $i\geq 2$, \emph{does} admit the rational first integral $s_i/g$; these functions simply do not survive on $S$.)

For part (3): by Lemma \ref{lem:functions}, any two elements of $\MM(S)$ are rational functions of $\Pp^{k+1}$ which are constant or first integrals of the same one-codimensional foliation, hence functionally dependent ($dR\wedge dR'\equiv 0$); for rational functions on $\Pp^{k+1}$, functional dependence implies algebraic dependence (the map $(R,R'):\Pp^{k+1}\dashrightarrow \CC^2$ has rank at most one, so its image is contained in an algebraic curve). Hence $\trdeg_{\CC}\MM(S)\leq 1$. Alternatively, this follows from Proposition \ref{prop:pencil} together with $\#\Fol(S)=\ell<\infty$. Finally, the statements hold for every connected neighborhood $S'$ of $\widehat C$: the proofs of Lemma \ref{lem:functions} use only the restriction of $F$ to a neighborhood of $C$ (Rossi) and to a neighborhood of $E$ (Lemma \ref{lem:Eside}), both available inside $\rho^{-1}(S')$, and restriction $\MM(S)\to\MM(S')$ is injective and, by the same computation, surjective.
\end{proof}

\begin{remark}
In \cite{FLS}, surfaces containing curves with positive self-intersection and with field of meromorphic functions of transcendence degree $0$, $1$ \emph{or} $2$ are constructed. By Proposition \ref{prop:pencil}, the examples there with transcendence degree $2$ necessarily carry infinitely many foliations -- indeed they carry all the foliations induced by pencils of rational functions of $\Pp^2$. Theorem \ref{thm:B} shows that, conversely, imposing finitely many foliations caps the transcendence degree at $1$. For a study of the possible fields of meromorphic functions on neighborhoods of rational curves in surfaces, see also \cite{Lv2}.
\end{remark}

\begin{remark}
The manifolds of Theorem \ref{thm:A} with $\ell=0$ have no non-constant meromorphic functions (Theorem \ref{thm:B}(1)); for $C$ rational and $n=2$, germs with this property (and with arbitrary normal degree $d>0$) were first constructed in \cite{FL}, and for higher dimensional submanifolds with ample normal bundle in \cite{FLP}. The novelty here is the simultaneous control of the number of foliations, of distributions, and of the function field, in any dimension and for any projective curve.
\end{remark}

\section{A surface with exactly two foliations}\label{sec:pereira}

We present now a different construction, explained to us by Jorge Vit\'orio Pereira, of a surface containing a rational curve and carrying exactly two foliations. Let $\mathcal{F}_1$ and $\mathcal{F}_2$ be codimension one foliations on $\mathbb{P}^3$, defined by homogeneous $1$-forms $\omega_1$ and $\omega_2$, and denote by $\mathcal{G}=\mathcal{F}_1 \cap \mathcal{F}_2$ the codimension two foliation defined by $\omega_1\wedge\omega_2=0$. We assume that:
\begin{enumerate}
\item $\FF_1$ is not singularly transversely affine;
\item $\FF_2$ is not the pull-back of a foliation on a projective surface by a dominant rational map;
\item the form $\omega_1\wedge \omega_2$ is saturated (its zero set has codimension $\geq 2$).
\end{enumerate}
Recall that a foliation $\FF$ is \emph{singularly transversely affine} when it can be defined by a rational $1$-form $\omega$ for which there is a closed rational $1$-form $\vartheta$ with $d\omega=\vartheta\wedge\omega$; the property does not depend on the choice of $\omega$, since $d(g\omega)=\big(\tfrac{dg}{g}+\vartheta\big)\wedge(g\omega)$.

\begin{prop}\label{prop:twofol}
There is no codimension one foliation other than $\mathcal{F}_1$ and $\mathcal{F}_2$ containing $\mathcal{G}$.
\end{prop}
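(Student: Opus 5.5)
The plan is to argue by contradiction. Suppose $\FF_3$ is a codimension one foliation on $\Pp^3$, distinct from $\FF_1$ and $\FF_2$, which contains $\GG$, and let $\omega_3$ be a homogeneous $1$-form defining it. Containment $T\GG\subset T\FF_3$ means $\omega_3\wedge\omega_1\wedge\omega_2=0$. Since $\omega_1\wedge\omega_2$ is saturated by hypothesis (3), we can write
$$\omega_3=a\,\omega_1+b\,\omega_2$$
with $a,b$ rational functions, and $ab\not\equiv 0$ because $\FF_3\neq\FF_1,\FF_2$. Rescaling $\omega_3$ gives the normalized form $\omega_3=\omega_1+h\,\omega_2$, with $h$ rational and non-zero; this is the same normalization used in the proof of Proposition \ref{prop:genposbound}.

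Next I expand the integrability condition $\omega_3\wedge d\omega_3=0$. Using $\omega_i\wedge d\omega_i=0$, it becomes
$$h\,(\omega_1\wedge d\omega_2+\omega_2\wedge d\omega_1)+\omega_1\wedge dh\wedge\omega_2=0. \qquad (\dagger)$$
The argument then splits according to whether $h$ is constant.

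\emph{Case $h$ constant.} Then $(\dagger)$ reduces to $\omega_1\wedge d\omega_2+\omega_2\wedge d\omega_1=0$. Hence every member $\omega_1+t\omega_2$ of the linear pencil is integrable, and $\GG$ lies in infinitely many foliations; this matches \cite[Theorem D]{BP} with $q=2$. Here I would invoke the classification of pencils of integrable $1$-forms (Cerveau \cite{Cer}, \cite[Theorem A]{BP}). It says that either the pencil is the pull-back, under a dominant rational map $\Pp^3\dashrightarrow Y$ to a surface, of a pencil on $Y$, or the pencil carries a transversely affine structure. In the first alternative $\FF_2$ is a pull-back, contradicting (2). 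In the second, the standard construction gives a closed rational $1$-form $\vartheta$ with $d\omega_1=\vartheta\wedge\omega_1$. Concretely, one writes $d\omega_1$ and $d\omega_2$ in terms of a common $1$-form by exploiting $d\omega_i\wedge\omega_1\wedge\omega_2=0$. That contradicts (1).

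\emph{Case $h$ non-constant.} Wedge $(\dagger)$ with $\omega_1$ and with $\omega_2$. This shows that $d\omega_1$ and $d\omega_2$ can be expressed modulo $\omega_1,\omega_2$ through $dh/h$. I aim for an identity of the form $d\omega_1=\vartheta\wedge\omega_1$ with $\vartheta$ a combination of $dh/h$ and a $1$-form $\alpha$ satisfying $d\omega_2=\alpha\wedge\omega_2$, and then check that $\vartheta$ is closed, contradicting (1). If $\vartheta$ fails to be closed, the plan is to show instead that $h$, together with a first integral coming from $(\dagger)$, makes $\GG$ have a rational first integral. In that event $\FF_2$ would contain the fibers of a map to a surface, hence be a pull-back, contradicting (2).

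The main obstacle is this second case: producing the closed form $\vartheta$ or the factorization through a surface from $(\dagger)$ alone. I would first try to reduce it to the first case using the structure results of \cite[Section 8]{BP}. Their minimality reduction \cite[Lemma 8.1]{BP} applied to $\{\FF_1,\FF_2,\FF_3\}$ should yield a genuine pencil or conic of integrable forms containing $\GG$, to which the dichotomy "pull-back from a surface versus transversely affine" applies. Hypotheses (1) and (2) are designed exactly to rule out both branches, while (3) is used only for the decomposition $\omega_3=a\omega_1+b\omega_2$.
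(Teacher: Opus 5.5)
Your Case $h$ constant is fine: it is a direct application of Cerveau's theorem in the form \cite[Theorem A]{BP}. Case $h$ non-constant, however, is a genuine gap. What you write there (a closed $\vartheta$, a rational first integral, an appeal to \cite[Lemma 8.1]{BP}) is a list of hoped-for outcomes, not an argument.

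The missing observation is that the case split is unnecessary, because $(\dagger)$ says \emph{exactly} that the pencil spanned by $\omega_1$ and $h\omega_2$ is integrable. Indeed, $h\omega_2$ still defines $\FF_2$ and is integrable, since $h\omega_2\wedge d(h\omega_2)=h\,\omega_2\wedge dh\wedge\omega_2+h^2\,\omega_2\wedge d\omega_2=0$. Expanding,
\[
(\omega_1+t\,h\omega_2)\wedge d(\omega_1+t\,h\omega_2)=t\,\Bigl(h\,(\omega_1\wedge d\omega_2+\omega_2\wedge d\omega_1)+\omega_1\wedge dh\wedge\omega_2\Bigr),
\]
because the coefficients of $t^0$ and $t^2$ vanish identically. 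So $(\dagger)$ says that every member of $\{\omega_1+t\,h\omega_2\}_{t\in\Pp^1}$ is integrable. This pencil is non-degenerate, since $\omega_1\wedge h\omega_2=h\,\omega_1\wedge\omega_2\not\equiv0$.

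Your Case 1 argument then applies verbatim to this pencil (write $h=A/B$ and use $B\omega_1+tA\omega_2$ if you want polynomial forms). Either every member, in particular $\omega_1$ at $t=0$, is singularly transversely affine, contradicting (1). Or every member, in particular $h\omega_2$ at $t=\infty$, is a pull-back from a surface, contradicting (2).

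This is precisely the paper's proof. It writes $\omega=P_2\omega_1-P_1\omega_2$ and notes that integrability of $P_2\omega_1+tP_1\omega_2$ is a quadratic condition in $t$. Since it holds at $t=0,\infty,-1$, it holds for all $t$. The point is to take as generators of the pencil not $\omega_1,\omega_2$ but the multiples $a\omega_1$, $b\omega_2$ that actually occur in $\omega_3$.

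Your fallback in Case 2 would also not work as stated. A rational first integral of $\GG$ only gives a map to a curve, so it does not make $\FF_2$ a pull-back from a surface. For that you would need $\GG$ to be the foliation by fibres of a dominant map to a surface, and nothing in your sketch produces that.
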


\begin{proof}
Let $\mathcal{H}=\{\omega=0\}$ be a foliation containing $\mathcal{G}$, with $\HH\neq\FF_1,\FF_2$. Then $\omega\wedge \omega_i$ also defines $\mathcal{G}$, so $\omega\wedge \omega_i=P_i\, \omega_1 \wedge \omega_2$ for some homogeneous polynomials $P_i$, $i=1,2$, by (3). Then $(\omega + P_1\omega_2)\wedge \omega_1 = 0$, which implies that we can write $\omega = \dfrac{A}{B}\omega_1 - P_1 \omega_2$ for some homogeneous polynomials $A$ and $B$ (two $1$-forms with vanishing wedge are proportional over the field of rational functions). Taking the wedge product with $\omega_2$ and comparing both sides we get
$$
P_2\, \omega_1 \wedge \omega_2= \frac AB\, \omega_1\wedge \omega_2 \ \Longrightarrow\ \frac AB = P_2 \ \Longrightarrow\ \omega = P_2 \omega_1 - P_1 \omega_2.
$$
Since $\HH\neq\FF_1,\FF_2$, we have $P_1P_2\not\equiv0$. The forms $P_2\omega_1$, $P_1\omega_2$ and $\omega=P_2\omega_1-P_1\omega_2$ are integrable, and the integrability of the member $\eta_t=P_2\omega_1+tP_1\omega_2$ of the pencil they span is a quadratic condition in $t$; vanishing at $t=0$, $t=\infty$ and $t=-1$, it vanishes identically. Thus $\{\eta_t\}_{t\in\Pp^1}$ is a pencil of integrable $1$-forms, and it is non degenerate because $P_2\omega_1\wedge P_1\omega_2=P_1P_2\,\omega_1\wedge\omega_2\not\equiv0$.

We may therefore apply Cerveau's theorem \cite{Cer}, in the form given in \cite[Theorem A]{BP}: either every member of the pencil is singularly transversely affine, or there are a projective surface $Y$, a dominant rational map $\varpi:\Pp^3\dashrightarrow Y$ and foliations $\GG_t$ on $Y$ with $\{\eta_t=0\}=\varpi^*\GG_t$ for all $t$. The first alternative gives in particular that $\FF_1$, which is defined by the member $P_2\omega_1$ corresponding to $t=0$, is singularly transversely affine, contradicting (1). The second gives $\FF_2=\varpi^*\GG_{\infty}$, contradicting (2).
\end{proof}

Now we follow the argument of \cite{FLP}. Recall that $\GG$ is a one dimensional foliation on $\Pp^3$.

\begin{definition}\label{def:weaktransv}
A line $L\subset \Pp^3$ is \emph{weakly transverse} to $\GG$ if
$$L\cap \mathrm{Sing}(\GG)=\varnothing \qquad\text{and}\qquad T_pL\cap T_p\GG=\{0\}\ \ \text{ for \emph{every} } p\in L.$$
\end{definition}

The terminology is meant to stress that this is \emph{not} transversality in the usual sense: both $L$ and $\GG$ are one dimensional, so $T_pL\oplus T_p\GG$ is a plane in the three dimensional space $T_p\Pp^3$ and the two tangent lines never span it. What the condition does say is that $L$ is nowhere tangent to $\GG$, and it is essential that it holds at every point of $L$, not merely at the generic one.

Such lines exist. On the one hand, $\mathrm{Sing}(\GG)$ has dimension at most one, so the lines meeting it form a family of dimension at most $1+2=3$ inside the four dimensional Grassmannian $\mathbb{G}(1,3)$. On the other hand, through each point $p\in \Pp^3\setminus\mathrm{Sing}(\GG)$ there passes exactly one line tangent to $\GG$ at $p$, namely the one with direction $T_p\GG$; hence the lines tangent to $\GG$ at some point also form a family of dimension at most three. A generic line $L$ therefore avoids both families and is weakly transverse to $\GG$.

Fix such an $L$. Since $\GG$ is regular along the compact curve $L$, we may cover $L$ by finitely many foliated charts for $\GG$, in each of which the local leaf space is a smooth two dimensional polydisc, the changes of charts being biholomorphisms between open subsets of such polydiscs. Choosing a tubular neighborhood $W$ of $L$ small enough that each leaf of $\GG|_W$ is contained in a single plaque of one of these charts, the leaf space
$$\Ss:=W/\GG|_W, \qquad \pi: W\longrightarrow \Ss,$$
is a Hausdorff smooth complex surface and $\pi$ is a holomorphic submersion. This is precisely where weak transversality is used: the kernel of $d\pi_p$ is $T_p\GG$, so the condition $T_pL\cap T_p\GG=\{0\}$ at \emph{every} $p\in L$ says exactly that $\pi|_L$ is an immersion. Shrinking $W$ further if necessary, no plaque meets $L$ twice, so $\pi|_L$ is also injective and $C:=\pi(L)$ is a smooth rational curve embedded in $\Ss$. Any foliation $\HH$ on $\Ss$ pulls back, $\pi$ being a submersion, to a foliation $\pi^*\HH$ on $W$ containing $\GG|_W$; since $W$ is a neighborhood of the curve $L\subset \Pp^3$, this foliation extends to $\Pp^3$ by \cite[Theorem 3.1]{Ros}, giving rise to a global foliation containing $\mathcal{G}$ (containment holds on $W$, hence everywhere by $(\star)$); by Proposition \ref{prop:twofol}, $\pi^*\HH$ is the restriction of $\FF_1$ or $\FF_2$. Conversely $\FF_1$ and $\FF_2$, containing $\GG$, descend to two distinct foliations on $\Ss$. We deduce that $\Ss$ carries exactly two foliations.

Unlike the surfaces produced by Theorem \ref{thm:A}, here we do not know whether every codimension one distribution on $\Ss$ is integrable. On the other hand, this construction has the interesting feature that the two foliations of $\Ss$ are of algebraic origin, in the sense that they are the quotients by $\pi$ of the restrictions to $W$ of two foliations globally defined on $\Pp^3$ by homogeneous $1$-forms. We stress that this does \emph{not} mean that they are algebraically integrable in the classical sense; quite the contrary, a foliation with a non-constant rational first integral $f$ is defined by the closed rational $1$-form $df$ and is therefore singularly transversely affine, so hypothesis (1) forbids $\FF_1$ from having one.

\end{document}